\newcommand{\eps}{\varepsilon}
\newcommand{\horver}{\theta^{hor/ver}}
\newcommand{\hor}{\theta^{hor}}
\newcommand{\ver}{\theta^{ver}}
\newcommand{\ahor}{\alpha^{hor}}
\newcommand{\aver}{\alpha^{ver}}
\newcommand{\dhor}{\delta^{hor}}
\newcommand{\dver}{\delta^{ver}}
\newcommand{\opt}{\theta_{opt}}
\newcommand{\curl}{\textup{curl}^{d,\eps}}
\newcommand{\curln}{\textup{curl}^{d,\eps_n}}
\newcommand{\Rot}{\textup{Rot}}
\newcommand{\sgn}{\textup{sign}}
\newcommand{\INNhor}{\mathcal{I}_{\textup{NN}}^\textup{hor}}
\newcommand{\INhor}{\mathcal{I}^{\textup{hor}}_{\textup{N}}}
\newcommand{\INNver}{\mathcal{I}_{\textup{NN}}^\textup{ver}}
\newcommand{\INver}{\mathcal{I}_{\textup{N}}^{\textup{ver}}}
\newcommand{\derhor}{\partial_1^{d,\eps}}
\newcommand{\derver}{\partial_2^{d,\eps}}
\newcommand{\dy}{\textup{d}y}
\newcommand{\Ind}{\mathcal{I}}
\newcommand{\pc}{\mathcal{PC}}
\newcommand{\pa}{\mathcal{PA}}
\newcommand{\E}{E_{\eps,\Delta}}
\newcommand{\Ehor}{E^{hor}_{\eps,\Delta}}
\newcommand{\Ever}{E^{ver}_{\eps,\Delta}}
\newcommand{\Eiso}{E_{\eps,\delta}}
\newcommand{\Eisohor}{E^{hor}_{\eps,\delta}}
\newcommand{\Eisover}{E^{ver}_{\eps,\delta}}
\newcommand{\Hn}{H_n}
\newcommand{\Hnhor}{H_n^{hor}}
\newcommand{\Hnver}{H_n^{ver}}
\newcommand{\dom}{\textup{dom}}
\renewcommand{\H}{H_{\sigma,\gamma}}
\newcommand{\Sbound}{\mathcal{S}_0(\Omega,\eps)}
\newcommand{\Sboundn}{\mathcal{S}_0(\Omega,\eps_n)}
\newcommand{\T}{T^{\Delta}}
\newcommand{\Sboundper}{\mathcal{S}_0^{\vert per\vert }(\Omega,\eps)}
\DeclareMathOperator{\argmin}{argmin}
\title{Microstructures in a
two-dimensional frustrated spin system: Scaling regimes and a discrete-to-continuum
limit }
\author{Janusz Ginster, Melanie Koser and Barbara Zwicknagl \\
\it\normalsize
Humboldt-Universit\"at zu Berlin, Institut f\"ur Mathematik, Unter den Linden 6, 10099 Berlin
}
\date{\today}
\begin{document}

\maketitle
\begin{abstract}
We study pattern formation within the $J_1$-$J_3$ - spin model on a two-dimensional square lattice in the case of incompatible (ferromagnetic) boundary conditions on the spin field. We derive the discrete-to-continuum $\Gamma$-limit at the helimagnetic/ferromagnetic transition point, which turns out to be characterized by a singularly perturbed multiwell energy functional on gradient fields. Furthermore, we study the scaling law of the discrete minimal energy. The constructions used in the upper bound include besides rather uniform or complex branching-type patterns also structures with vortices. Our results show in particular that in certain parameter regimes the formation of vortices is energetically favorable. 
\end{abstract}

\section{Introduction} 

We study the formation of complex magnetization patterns in certain magnetic compounds. Such structures can often be explained in the framework of statistical mechanics as a result of competing ferromagnetic and antiferromagnetic interactions (see e.g. \cite{B11-2013-Diep}). We focus on an anisotropic version of the classical $J_1-J_3$-spin model on regular square lattices which arises from the classical $J_1-J_2-J_3$-model in the case $J_2=0$. 
Precisely, we set $\Omega \coloneqq [0,1)^2$ and consider the square lattice $\Omega \cap\eps\Z^2$ of side length $\eps>0$. To each atom $(i\eps, j\eps) \in \Omega \cap\eps\Z^2$ we associate a spin vector $u_{i,j}\in \mathbb{S}^1$, where $\mathbb{S}^1$ denotes the unit sphere in $\R^2$. The configurational energy of a spin field $u \colon \Omega \cap\eps\Z^2 \rightarrow \mathbb{S}^1$, $u(i\eps,j\eps)=u_{i,j}$, is given by
\begin{equation}
    \label{eq:J1J3model}
    \begin{split}
     I_{\alpha, \eps}(u) \coloneqq & 
     \alpha^{hor}\sum_{(i,j)} u_{i,j}\cdot u_{i+1, j} -\alpha^{ver}\sum_{(i,j)} u_{i,j}\cdot u_{i, j+1}  
     + \beta^{hor}\sum_{(i,j)} u_{i,j}\cdot u_{i+2, j}+ \sum_{(i,j)} u_{i,j}\cdot u_{i, j+2}\end{split}
\end{equation}
for interaction parameters $\alpha^{hor}, \alpha^{ver}, \beta^{hor}>0$. Here $x\cdot y$ denotes the scalar product of two vectors $x,y\in\R^2$. The first two terms in the energy are ferromagnetic and favor aligned neighboring spin vectors, whereas the other two terms are anti-ferromagnetic and favor antipodal next-to-nearest neighboring spins. 
Throughout this work, we set for simplicity $\beta^{hor}=1$, and consider the case of (incompatible) Dirichlet boundary conditions on the spin field. In this sense, our work complements the more local results from \cite{B11-2015-CS} in the one-dimensional and from \cite{B11-2019-CFO} in the two-dimensional setting, and studies explicitly the influence of boundary conditions, which is a step towards the understanding of the influence of external fields. Our two main results are the following. We first present the variational continuum limit at the helimagnetic/ferromagnetic transition point of the anisotropic model \eqref{eq:J1J3model} in the sense of $\Gamma$-convergence (see Theorem \ref{thm:gamma}), which turns out to be a singularly perturbed multi-well functional for gradient fields with diffuse interfaces. Additionally, we consider the scaling behaviour of the minimal energy \eqref{eq:J1J3model} in the case $\alpha^{hor}=\alpha^{ver}$ (see Theorem \ref{thm:scaling}), and show in particular that there is a parameter regime in which optimal structures have topological defects. \\

Let us briefly explain our results in some more detail and how they complement the literature, the precise statements can be found in Section \ref{sec:mainresults} after the notation is introduced. The results of this paper are essentially contained in the Ph.D. thesis \cite{koser-diss-24} where also more discussion of related literature can be found. 
If no boundary conditions are assigned, the minimizers of the energy \eqref{eq:J1J3model} with $\beta^{hor}=1$ are well-known (see \cite{B11-2015-CS,B11-2019-CFO}). As the energy decouples into a contribution of vertical  interactions and one of horizontal interactions, minimizing configurations are characterized by their behaviour in vertical and horizontal direction, respectively, given as minimizers of one-dimensional energies as studied in \cite{B11-2015-CS}. Precisely, if $\alpha^{hor}\geq 4$ or $\alpha^{ver}\geq 4$ then minimizers are (up to boundary effects), {\em ferromagnetic} in horizontal or vertical direction, respectively, i.e. the spin field is constant in the respective direction. If, however, $\alpha^{hor}$ or $\alpha^{ver} \in (0,4)$ then minimizers are (up to boundary effects) {\em helical} in the respective direction, i.e., spin vectors rotate with the optimal angle $\opt^{hor} \coloneqq \pm \arccos(\alpha^{hor}/4)$ and $\opt^{ver} \coloneqq \pm \arccos(\alpha^{ver}/4)$ between neighboring lattice points in horizontal and vertical direction, respectively. Note that if both, $\alpha^{hor}, \alpha^{ver}\in (0,4)$ then there are (up to a global rotational invariance) four optimal structures, corresponding to clockwise or counterclockwise rotations with the optimal angle in vertical and horizontal direction, respectively, see Figure \ref{fig:optimal_profiles}.\\

{\bf Gamma-convergence. } 
We first focus on the helimagnetic regime $\alpha^{hor}, \alpha^{ver}\in (0,4)$ and study the energy \eqref{eq:J1J3model} with incompatible boundary conditions on the spin field $u$, i.e., setting $u(0,j\eps)=(0, 1)^T$ 
for all $j\eps\in [0,1)\cap\eps\Z$. In this case, the explicit characterization of minimizers is very challenging as one expects the formation of complex magnetization patterns near the boundary, mixing regions with optimal helical structures to approximate the boundary condition. We first focus on the regime where $\eps\to 0$, i.e., the continuum limit, and $\alpha\nearrow 4$, i.e., approaching the helimagnetic/ferromagnetic transition point. Following \cite{B11-2015-CS,B11-2019-CFO}, we reformulate the problem in terms of the order parameters $\sqrt{\frac{2}{\delta^{hor / ver}}} \sin\left( \frac{\horver}{2}\right)$, where $\delta^{hor / ver} = \frac{4-\alpha^{hor / ver}}4$ and
$\horver_{i,j}$ is the oriented angle between neighboring spin vectors in horizontal and vertical direction, respectively. The normalization is chosen such that for $(\hor, \ver)  \in \{(\pm \opt^{hor}, \pm \opt^{ver})\}$, the order parameters take values $(\pm 1,\pm 1)$, for details see Section \ref{sec:preliminaries}. \\
As minimizers for \eqref{eq:J1J3model} are explicitly known, one can proceed in the spirit of $\Gamma$-development (see \cite{Braides-Truskinowski}). A heuristic computation from \cite{B11-2019-CFO} based on \cite{B11-2015-CS} suggests that in the above mentioned parameter regime, the behaviour of a properly renormalized version of \eqref{eq:J1J3model} behaves like a Modica-Mortola type functional in the order parameters, with a typical transition length between regions of different chirality being of order $\eps/\sqrt{\delta^{hor/ver}}$. We recall the heuristics for the reader's convenience in the appendix. In the case without boundary conditions, this behaviour has been confirmed rigorously in \cite{B11-2015-CS} in the one-dimensional setting for $\eps\to 0$, and in \cite{B11-2019-CFO} in the two-dimensional setting in the regime $\eps\to 0$ and $\eps/\sqrt{\delta}\to 0$ for $\delta:=\delta^{hor}=\delta^{ver}$. Here, as expected from the heuristics, the continuum model takes the form of an anisotropic perimeter functional subject to a differential inclusion (see Remark \ref{rem:marco}). We point out that a major difficulty in the two-dimensional setting arises from a compatibility condition for the horizontal and vertical angle changes. More precisely, reformulating the energy in terms of the order parameters induces a discrete curl condition, which essentially ensures that if one starts at the spin vector at a given lattice point, then going along a lattice cell, the angular changes have to add up to a multiple of $2\pi$, i.e., yielding the same spin vector at the beginning and the end of the closed path. 
For related asymptotic results on this and other variants of the $J_1-J_2-J_3$-model in different parameter regimes, we refer, e.g., to \cite{cicalese2022variational,kubin-lamberti,scilla-valocchia,B11-2009-AC,B11-2018-BCDLP,B11-2015-ACR, CICALESE2023112929} and the references therein. \\
We consider here the situation with Dirichlet boundary conditions on the spin field, for which the above described energy becomes infinite. Therefore, we study the regime $\eps\to 0$, $\delta^{hor/ver} \to 0$ and $\lim \eps/\sqrt{2\delta^{ver}}>0$ in the anisotropic setting 
$\delta^{hor}/\delta^{ver}\in [0, \infty)$. We confirm the expectation from the heuristics by rigorously deriving the $\Gamma$-limiting functional, which is of Modica-Mortola type for gradient fields, i.e., a singularly perturbed multiwell energy with incompatible Dirichlet boundary conditions (see Theorem \ref{thm:gamma} for the precise statement). An important step in the proof is to show that the limiting field is curl-free and hence a gradient field (c.f. also \cite{B11-2019-CFO}). On the discrete level, this requires to control the number of {\em vortices}, i.e., lattice cells in which the angular changes do not add up to $0$ but to $\pm 2\pi$.  \\
\begin{figure}[!ht]
    \centering
    \includegraphics[scale = 0.5]{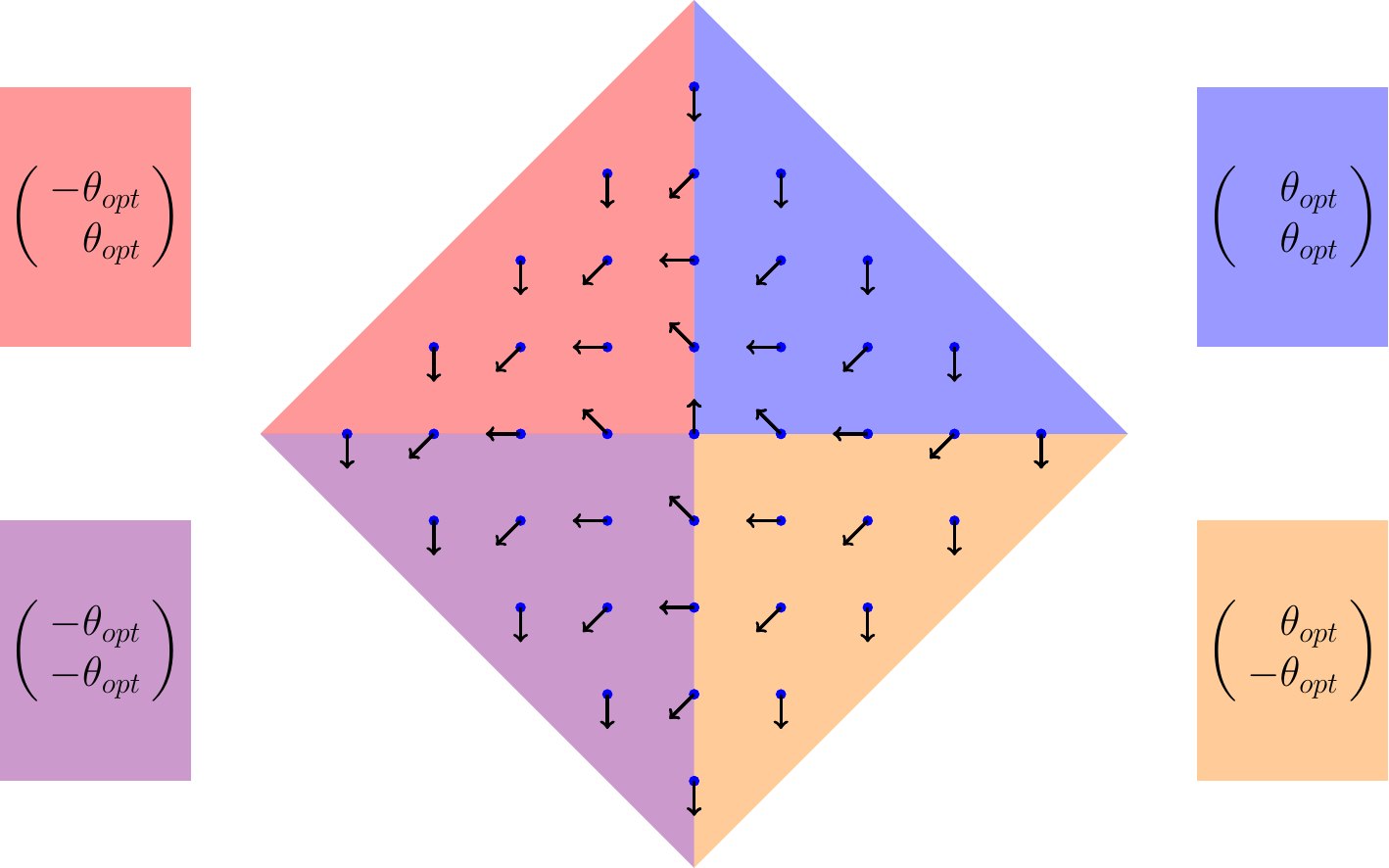}
    \caption{Sketch of a possible spin field  
    with $\alpha \coloneqq \alpha^{hor} = \alpha^{ver} \in (0,4)$.}
    \label{fig:optimal_profiles}
\end{figure}

{\bf Scaling laws.} Singularly perturbed problems related to the limit problem have been studied in \cite{GiZw22,GiZw23_Martensites,Gin23}, and in particular,  the scaling behaviour of the minimal energy has been characterized. The constructions used in the upper bounds there suggest that in some regimes, uniform patterns seem optimal, while in others, fine mixtures of chiralities forming complex branching-type patterns are 
expected. Qualitatively similar behaviour is well-known also in related singularly perturbed models for other pattern forming systems (see \cite{B11-kohn:06}), starting with the seminal work on shape-memory alloys (see e.g. \cite{kohn-mueller:92,B11-1994-KM} and also  e.g. \cite{conti:00,conti:06,zwicknagl:14,knuepfer-kohn-otto:13,chan-conti:14-1,bella-goldman:15,simon-17,simon2018quantitative,knuepfer-otto:18,ContiDiermeierMelchingZwicknagl2020,seiner2020branching,ruland2021energy,CDKZ21} and references therein) but also for continuum models from micromagnetics (see e.g. \cite{choksi-kohn:98,choksi-et-al:99,DKMO:02,ChoksiKohnOtto2004,otto-viehmann:10,knuepfer-muratov:11,DaVeJa20,Dabade-et-al:19,ried2023domain} and the references therein), pattern formation in elastic films (see e.g. \cite{JinSternberg2,BCDM02,bella-kohn:14,BCM2017}), or super-conductors (see e.g. \cite{ChoksiKohnOtto2004,ContiOttoSerfaty2016,ContiGoldmanOttoSerfaty2018,goldman:20}), among many others. However, to the best of our knowledge, this is the first time to study scaling laws for branching-type patterns in discrete frustrated spin systems. 

As we have a $\Gamma$-convergence result with associated compactness, we expect the analogue scaling behaviour (at least asymptotically) in the respective parameter domain for the discrete energies \eqref{eq:J1J3model}. We make this more precise by providing upper and lower bounds on the minimal discrete energy in terms of the problem parameters $\delta^{hor}=\delta^{ver}$ and $\eps$. For the upper bound, on the one hand, we use discrete variants of constructions from \cite{GiZw22}, see Fig.~\ref{fig:ferromagnet} and Fig.~\ref{fig:helimagnet}, and prove a matching lower bound in the setting that there are no vortices in the spin field. However, we also prove that there is a parameter regime in which optimal configurations necessarily contain vortices, which can be viewed as topological defects, see Fig.~\ref{fig:vortex} for a sketch of a construction using vortices. Note that in the parameter regime in which vortices are expected the $\Gamma$-convergence result is not valid which is reflected by the fact that the limiting model does not incorporate defects. For the precise statement we refer to Theorem \ref{thm:scaling}. The conjectured phase diagram is sketched in Fig.~\ref{fig:phase_diagram}. In addition to the theoretical result presented here, note that in certain parameter regimes the formation of branching-type patterns or vortices (as in Figure \ref{fig:helimagnet} and Figure \ref{fig:vortex}) were also observed as the result of a numerical optimization \cite{koser-diss-24, BeKo24}. In particular, these numerical experiments inspired the construction using vortices for the upper bound in Theorem \ref{thm:scaling}. The existence of magnetic vortices in helimagnets was experimentally observed, see e.g. \cite{helimagnet_evidence_Uchida,Yu2011_Vortices}. More discussion can be found e.g. in \cite{Muehl09_Vortices, Muenz10_vortices}. Mathematically, the appearance of vortices and their connection to dislocation theory have been studied for related discrete models in \cite{Cic2011_screw, B11-2009-AC, AlLuGaPo2014, AlGaLuPo2016, AlBrCiLu2022, Alicandro_Braides_Cicalese_Solci_2023,CICALESE2023112929}. We note that our construction using vortices (see Figure \ref{fig:vortex} and proof of Proposition \ref{lem:upper}) bears similarities to configurations with dislocations near semicoherent interfaces (see \cite{AbGinZwi}).\\
\begin{figure}[t!]
\centering
   \begin{minipage}[t]{.43\linewidth}
   \centering
   \captionsetup{width=0.95\linewidth}
      \includegraphics[width=0.8\linewidth]{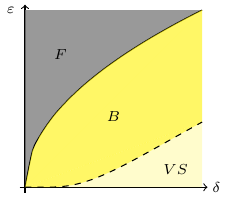}
      \caption{Conjectured phase diagram for the scaling law with the ferromagnetic (F), branching (B), and vortex (VS) phase, c.f.~Fig.~\ref{fig:ferromagnet}, Fig.~\ref{fig:helimagnet} and Fig.~\ref{fig:vortex}}
      \label{fig:phase_diagram}
   \end{minipage}
   \hspace{.1\linewidth}
   \begin{minipage}[t]{.43\linewidth} 
   \centering 
      \captionsetup{width= \linewidth}
      \includegraphics[width=0.7\linewidth]{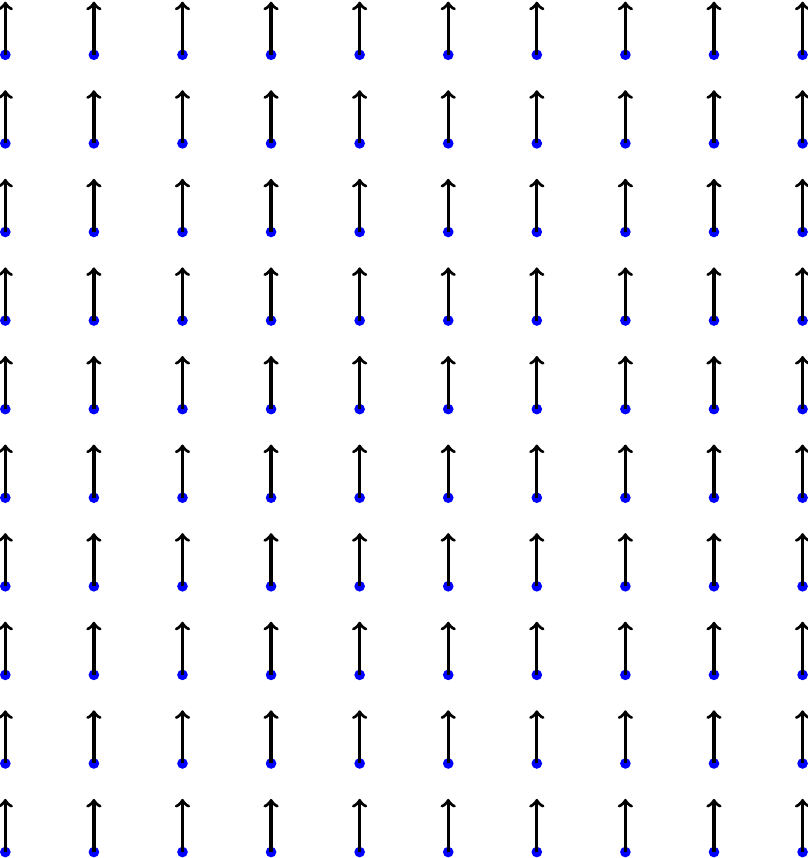}
      \caption{Sketch of the ferromagnetic (F) competitor.}
      \label{fig:ferromagnet}
   \end{minipage}
\end{figure}
\begin{figure}[ht!]
\centering
   \begin{minipage}[t]{.43\linewidth} 
   \centering
      \captionsetup{width=0.95\linewidth}
      \includegraphics[width=0.8\linewidth]{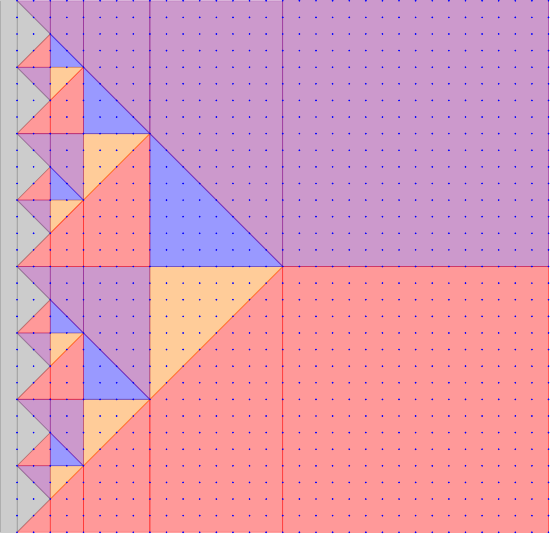}
      \caption{Sketch of the branching-type construction (B) using the optimal profiles from Figure \ref{fig:optimal_profiles}.}
      \label{fig:helimagnet}
   \end{minipage}
   \hspace{.1\linewidth}
   \begin{minipage}[t]{.43\linewidth}
   \centering
      \captionsetup{width=0.95\linewidth}
      \includegraphics[width=0.8\linewidth]{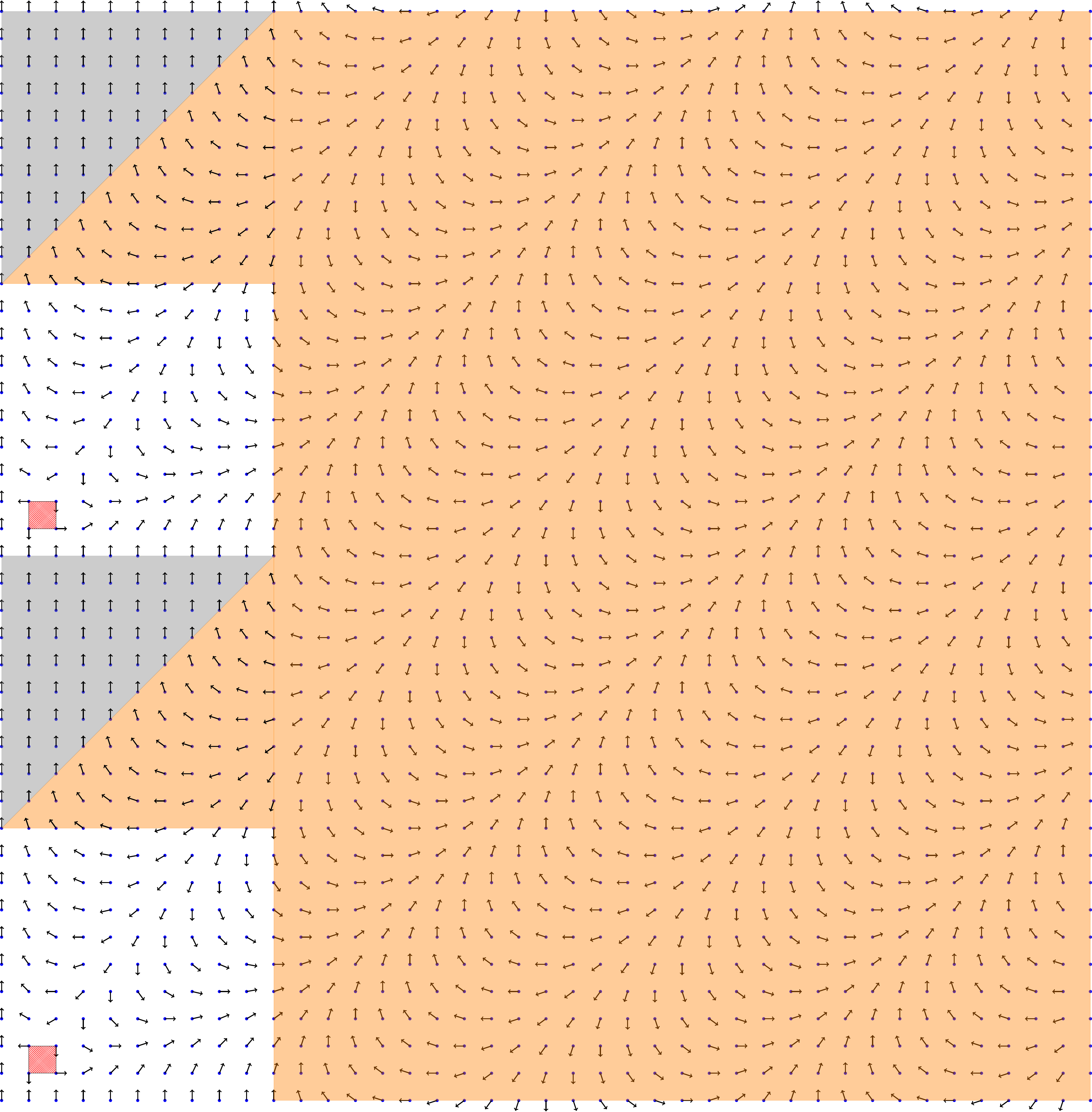}
      \caption{Sketch of a spin field with vortices. Around the red squares the angular changes (measured in $[-\pi,\pi)$) do not add up to $0$. This corresponds to vortices centered in the lower left corners of the squares, c.f.~\eqref{def:vortices}. }
      \label{fig:vortex}
   \end{minipage}
\end{figure}

{\bf Outline.} The remainder of the text is organized as follows. We introduce the precise model and our main results in Section \ref{sec:model}. In Section \ref{sec:preliminaries}, we collect some technical preliminaries needed in the proofs. The main parts are Section \ref{sec:Gamma} where we prove the $\Gamma$-limit result given in Theorem \ref{thm:gamma}, and Section \ref{sec:scaling} where we prove the scaling results for the minimal discrete energy given in Theorem \ref{thm:scaling}.

\section{The mathematical model and main results}\label{sec:model}
\subsection{Notation}
 Let $\Omega \coloneqq [0,1)^2, \ \eps \in (0, 1/2), \ \ahor, \aver \in (0, 4)$,  $\delta^{hor/ver} \coloneqq (4-\alpha^{hor/ver})/4$, 
 $\Delta \coloneqq (\dhor,\dver)$ and $\gamma \coloneqq \sqrt{\frac{\dhor}{\dver}}$. Furthermore, let $\sgn(x)\in\{\pm 1\}$ be the sign of $x\in \R$ with the convention $\sgn(0) = -1$. We define for vectors $a,b\in\R^2$ the Euclidean scalar product by $a\cdot b \coloneqq a^T b$ and the cross product by $a\times b\coloneqq a_1b_2 - a_2 b_1.$  We introduce several sets of indices, namely
\begin{equation}\begin{split}
&\Ind \coloneqq  \{ (i,j) \in \Z^2 \colon (i\eps, j\eps) \in \Omega \cap \eps\Z^2\}, \\
&\INhor \coloneqq \left\{ (i,j) \in\Ind \colon (i+1, j) \in \Ind \right\}, \quad 
\INver \coloneqq  \left\{ (i,j) \in \Ind \colon (i,(j+1))\in  \Ind \right\}, \\
&\INNhor \coloneqq \left\{ (i,j) \in \Ind \colon (i+2, j)\in \Ind\right\} , \quad 
\INNver \coloneqq  \left\{ (i,j) \in \Ind \colon (i,j+2) \in \Ind  \right\}.
\end{split}\label{def:indices}
\end{equation}
  Furthermore, let $Q_\eps(i,j) \coloneqq [i\eps, (i+1)\eps) \times [j\eps, (j+1)\eps)$ for $(i, j) \in \INhor\cap \INver.$ 
Note that we identify a lattice point $(i\eps, j\eps)\in \Omega\cap\eps\Z^2$ with its indices $(i,j)\in \Z^2$. Accordingly, for a function $g \colon \Omega\cap\eps\Z^2\rightarrow\R$ we write $g_{i,j} \coloneqq g(i\eps, j \eps)$ for $(i,j) \in \Ind$.
Moreover, we write for the discrete derivatives of a function $g \colon \Omega\cap\eps\Z^2\rightarrow\R^m$ (with $m \in \N$)
\begin{equation}
\partial_1^{d,\eps} g_{i,j} \coloneqq \frac{g_{i+1, j} - g_{i,j}}{\eps} \quad \textup{ and } \quad  \partial_2^{d,\eps} g_{i,j} \coloneqq \frac{g_{i, j+1} - g_{i,j}}{\eps}
\end{equation}
for $(i,j) \in \INhor$ and $(i,j)\in \INver$, respectively. Additionally, we define the discrete curl of a discrete vector field $(g,h) \colon \Omega \cap\eps\Z^2 \rightarrow \R^2$ via
\begin{equation}
    \nonumber \curl ((g,h)) \coloneqq \derhor h - \derver g \textup{\quad on\quad } \INhor \cap \INver.
\end{equation}
Note that for $(i,j) \in \INhor\cap\INver$, we have
\begin{equation}
    \nonumber \eps\curl ((g,h)_{i,j}) = h_{i+1, j} - h_{i,j} - g_{i,j+1} + g_{i,j}.
\end{equation}

\subsection{Spin fields and their angular velocity field}\label{sec:spin_intr}
In this section, we clarify the relation between a spin field and its angular velocity field. Moreover, we give an estimate of the number of large angles of an angular velocity field in terms of its energy. Note that throughout this manuscript we consider a static situation. The terminology 'angular velocity' refers to the angular changes of the spin field between neighboring points in $\eps \Z^2$ and \emph{not} to angular changes of the spin field in time. We can describe a spin field $v \colon \Omega \cap\eps\Z^2 \rightarrow \mathbb{S}^1$ by its angular velocities
\begin{equation}
    \label{def:horizontal-angle-velocity}
    \hor_{i,j}\coloneqq \sgn(v_{i,j} \times v_{i+1, j}) \arccos(v_{i,j} \cdot v_{i+1, j})\text{\qquad for $(i, j) \in \INhor$, and}
\end{equation}

\begin{equation}
    \label{def:vertical-angle-velocity}
    \ver_{i,j} \coloneqq \sgn(v_{i,j}\times v_{i,j+1}) \arccos(v_{i,j} \cdot v_{i,j+1})\text{\qquad for $(i, j)\in \INver$.}
\end{equation}
Here, $\arccos: [-1,1] \to [0,\pi]$ denotes as usual the inverse of $\cos: [0,\pi] \to [-1,1]$.
 We can retrieve the spin field by setting for $ (i,j) \in \INhor$
\begin{equation}
\nonumber     v_{i,j}= \Rot\left(\sum\limits_{k=0}^{i-1} \hor_{k,j} \right) v_{0,j} \quad \textup{ with } \quad\Rot(\theta) \coloneqq \left( \begin{array}{cc}
   \cos\theta  &-\sin\theta  \\
    \sin\theta & \cos\theta 
\end{array}\right)\quad  \textup{ for } \theta \in \R.
\end{equation}
However, not for every angular velocity field $(\hor, \ver) \colon \Omega \cap\eps\Z^2\rightarrow [-\pi, \pi)^2$ there exists an associated spin field $v \colon \Omega \cap \eps \Z^2 \rightarrow \mathbb{S}^1.$ A necessary and sufficient condition to recover a spin field from an angular velocity field $(\hor,\ver)\colon \Omega \cap\eps\Z \rightarrow [-\pi, \pi)^2$ is the discrete curl condition 
\begin{equation}
    \label{intr:curl_cond}
    \eps\curl ((\hor,\ver)) \in \{-2\pi, 0, 2\pi\}.
\end{equation}
This condition ensures that rotating a spin vector with angular changes according to the field $(\hor,\ver)$ within an $\eps$-cell $Q_\eps(i,j)  \subseteq \Omega$ yields the starting spin vector i.e., \[\Rot(\eps \curl((\hor, \ver))v = v.\] 
Let 
\begin{equation}
    \label{def:vortices}
    V\coloneqq \left\{  (i,j) \in \INNhor\cap \INNver \colon \vert \eps \curl((\hor,\ver)_{i,j})\vert = 2\pi \right\}
\end{equation}
be the set of {\em vortices} of an angular velocity field $(\hor,\ver)\colon \Omega \cap\eps\Z^2 \rightarrow [-\pi, \pi)^2$. 
If it holds $\vert \eps \curl((\hor,\ver)_{i,j})\vert = 2\pi $
for some $(i,j) \in \INNhor\cap\INNver$ we know that at least one of the four angular velocities $\hor_{i,j},\hor_{i,j+1}, \ver_{i,j}$, and $\ver_{i+1,j}$ can in absolute value not be smaller than $\pi/2$ i.e., 
\begin{equation}
\label{size-angles-vortex}
    \max\left\{\vert \hor_{i,j} \vert, \vert \hor_{i,j+1}\vert , \vert \ver_{i, j}\vert, \vert \ver_{i+1,j}\vert \right\}\geq \frac\pi2 .
\end{equation}
Hence, an upper bound on the number of large angular velocities also implies an upper bound on the number of vortices.

\subsection{The chirality parameters}
Next, we introduce the so-called chirality order parameters for an angular velocity field and to reformulate the two-dimensional lattice energy with respect to these parameters. This will be useful in the formulation of the $\Gamma$-convergence result, see Theorem \ref{thm:gamma}.
Let $u \colon  \Omega \cap\eps \Z^2 \rightarrow \mathbb{S}^1$ be a spin field and $(\hor, \ver)\colon \Omega \cap\eps\Z^2 \rightarrow [-\pi, \pi)^2$ the associated angular velocity field, see Section \ref{sec:spin_intr}. The chirality order parameters are then given by 
\begin{equation}
    \label{def:chirality_order_parameters}
   w^{\Delta}\coloneqq  w^{\Delta}(u) \coloneqq \sqrt{\frac{2}{\dhor}}\sin\left(\frac{\hor}{2}\right) \quad \textup{ and }\quad   z^{\Delta} \coloneqq  z^{\Delta}(u) \coloneqq \sqrt{\frac{2}{\dver}}\sin\left(\frac{\ver}{2}\right)
\end{equation}
on $\Omega \cap\eps\Z^2$. The signs of the chirality order parameters determines if a spin field rotates clockwise or counterclockwise in horizontal and vertical direction, respectively. Due to the trigonometric identity $2\sin^2(\frac\theta2) = 1- \cos(\theta)$ for $\theta \in [-\pi, \pi),$ one has $(w, z)\in \left\{ (\pm 1, \pm 1) \right\} $ if and only if the associated angular velocity field satisfies 
\[(\hor, \ver) \in \{ (\pm \theta_{opt}^{hor}, \pm \theta_{opt}^{ver}) \}\quad \textup{ with } \quad \theta_{opt}^{hor} = \arccos(1-\dhor) \textup{ and }\theta_{opt}^{ver}= \arccos(1-\dver).\] 
Moreover, we associate to a spin field $u:   \Omega \cap\eps \Z^2 \rightarrow \mathbb{S}^1$ the mapping \[T^{\Delta}(u) := (\gamma w^{\Delta}(u),z^{\Delta}(u)) \colon  \Omega \cap\eps \Z^2 \rightarrow \R^2.\]

\subsection{Extensions of discrete functions}

For a discrete function $g\colon \Omega \cap\eps\Z^2\rightarrow \R^m$ we denote by $\pc(g)$ its piecewise constant extension, i.e.,
\begin{equation}
    \label{def:gamma:constant_extension}
    \pc(g)(x,y) \coloneqq g_{i,j} \quad \textup{ for all } (x,y) \in Q_{\eps}(i,j)\text{\quad and $i, j = 0, \dots, \lfloor 1/\eps\rfloor.$}
\end{equation}

Additionally, we write $\pa(g)$ for the continuous, piecewise affine extension of $g$, precisely 
\begin{equation}
    \label{def:gamma:triangulation}
    \pa(g)(x,y) \coloneqq \begin{cases}
\partial_1^{d,\eps}g_{i,j} (x-i\eps) +\partial_2^{d,\eps}g_{i,j} (y-j\eps) + g_{i.j} \hfill&\textup{ if } (x,y) \in T^-_{\eps}(i,j),\\
\partial_1^{d,\eps}g_{i, j+1} (x- (i+1)\eps) +  \partial_2^{d,\eps}g_{i+1, j} (y-(j+1)\eps) + g_{i+1, j+1} \hfill \  & \textup{ if } (x,y) \in T^+_{\eps}(i,j),
\end{cases}
\end{equation}
where 
\[ T_{\eps}^-(i,j) \coloneqq \{ (x,y)\in Q_{\eps}(i,j) \colon 0 \leq y-j\eps\leq \eps - (x-i\eps)\} \quad \textup{ and }\quad T_{\eps}^+(i,j) \coloneqq Q_{\eps}(i,j) \setminus T_{\eps}^-(i,j), \]
see Figure \ref{fig:single_cube_triangle} for a sketch of $Q_\eps(i,j)$ and $T_\eps^{\pm}(i,j)$ and Figure \ref{fig:extension_discrete} for an illustration of the extensions $\pc(g)$ and $\pa(g)$ of a discrete function $g\colon \Omega \cap \eps\Z^2 \rightarrow \R^m$.
\begin{figure}[h]
    \centering
    \includegraphics[scale = 0.75]{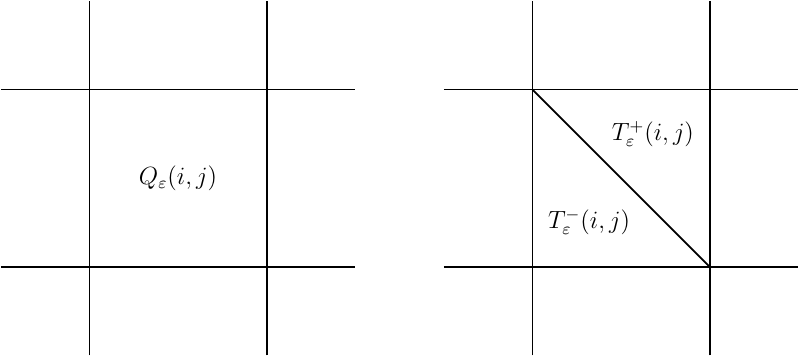}
    \caption{Left: Sketch of square with width $\eps > 0$. Right: Triangulation of this square into $T_{\eps}^-(i,j)$ and $T_{\eps}^+(i,j).$ }
    \label{fig:single_cube_triangle}
\end{figure}

We will sometimes write

\[ \pc(g)(i\eps, j\eps) = \pc(g)_{i,j} = g_{i,j} \quad\textup{ and } \quad \pa(g)(i\eps, j\eps) = \pa(g)_{i,j} = g_{i,j} \quad \textup{ for } i,j = 0, \dots, \left\lfloor \frac1\eps \right\rfloor.\]

\begin{figure}[h]
\centering
      \includegraphics[width=0.45\linewidth]{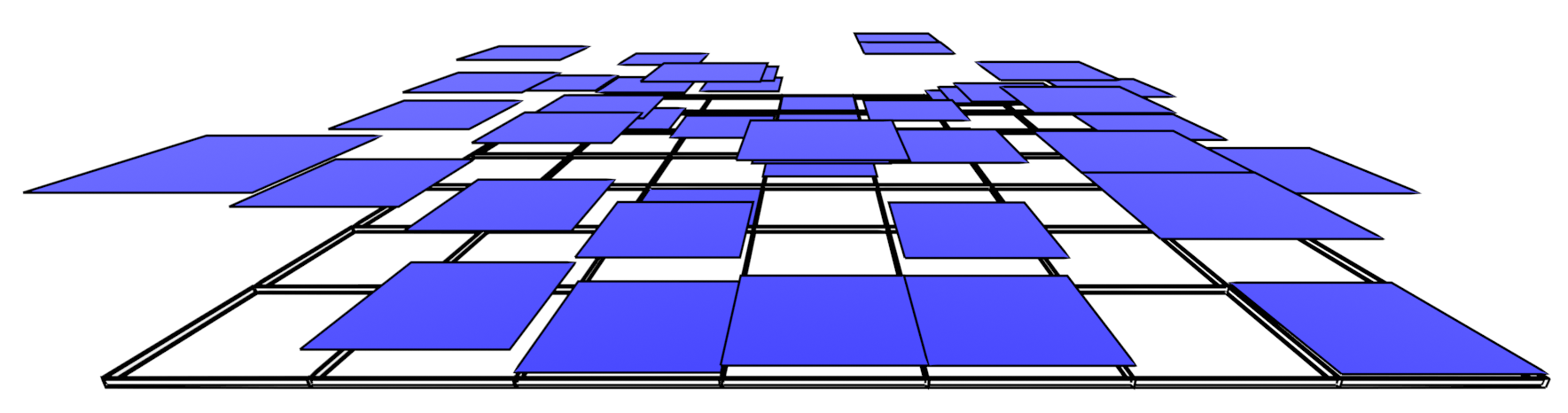}
      \includegraphics[width=0.45\linewidth]{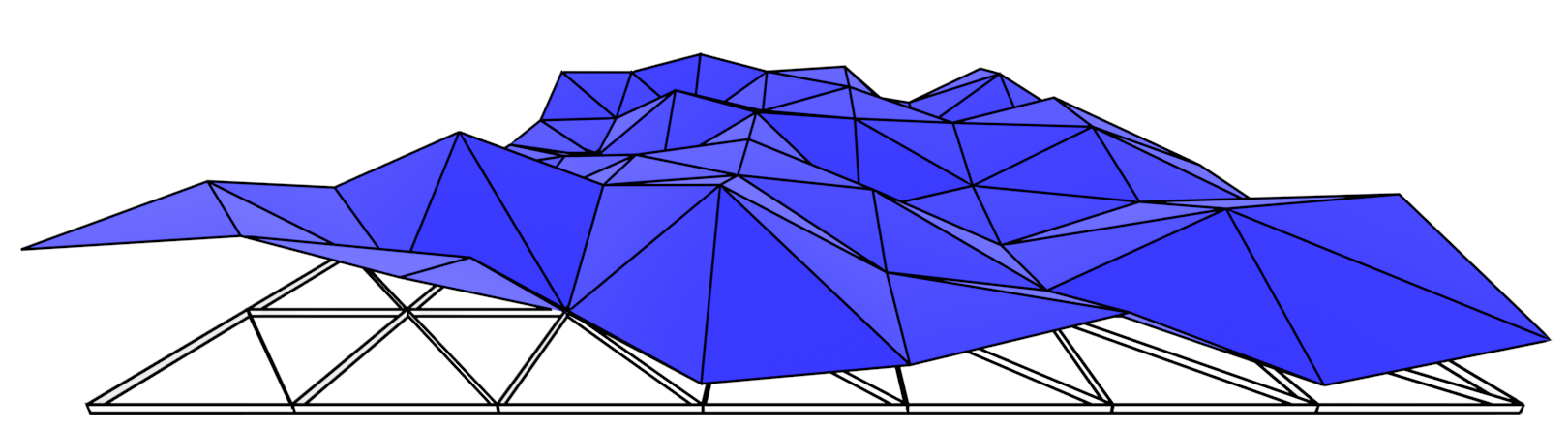}
      \caption{Left: Illustration of the piecewise constant extension $\pc(g) \colon \Omega \rightarrow \R$ of a discrete function $g \colon \Omega \cap\eps\Z^2 \rightarrow \R$ for $\eps = 1/7.$ The squares model the cells $Q_\eps(i,j)$. Right: Sketch of the piecewise affine and continuous extension $\pa(g)\colon \Omega\rightarrow\R.$}
      \label{fig:extension_discrete}
\end{figure}

\subsection{Reformulation of the energy}
Let $u: \Omega \cap \eps \Z^2 \to \mathbb{S}^1$. 
We define the energy $\E(u, \Omega) := \Ehor(u, \Omega) + \Ever(u, \Omega)$, where
\begin{align}
&\Ehor(u, \Omega) := \frac{\eps^2}{2}\sum_{(i,j)\in \INNhor} \vert u_{i,j} - 2(1-\dhor) u_{i+1,j} + u_{i+2,j}\vert^2  \\ 
\text{ and }\quad &\Ever(u, \Omega) := \frac{\eps^2}{2}\sum_{(i,j)\in \INNver} \vert u_{i,j} - 2(1-\dver) u_{i,j+1} + u_{i,j+2}\vert^2.
\end{align}
If $\delta = \dhor = \dver$ we also denote the energies $\E, \Ehor,$ and $\Ever$ by $\Eiso$, $\Eisohor$, and $\Eisover$, respectively. Let $B \subseteq \Omega$. The energy $E(\cdot, B)$ stands for the the $E$ restricted to $\INNhor\cap B$ and $\INNver\cap B$, respectively, for $E\in \{ \E, \Ehor, \Ever, \Eiso, \Eisohor, \Eisover\}.$ 
In the following, we discuss a reformulation of the energy $\E(\cdot, \Omega)$ from \cite{B11-2015-CS, B11-2019-CFO}. \\
   
   By applying the identity 
   \begin{align} &\vert u_{i,j} - 2(1-\dhor) u_{i+1, j} + u_{i+2, j}\vert^2 \\ = &2 + 4(1-\dhor)^2 - 4(1-\dhor) \cos(\hor_{i,j}) - 4(1-\dhor) \cos(\hor_{i+1, j}) + 2 \cos(\hor_{i, j} + \hor_{i+1, j})
   \end{align}
   to each summand of the horizontal energy contribution $\Ehor$, we obtain  
   \begin{equation}
       \begin{split}
           &\Ehor(u, \Omega) = \frac{\eps^2}{2} \sum_{(i,j) \in \INNhor} \vert u_{i,j} - 2(1-\dhor) u_{i+1, j} + u_{i+2, j}\vert^2\\
           = &\sum_{(i,j) \in \INNhor} \bigg[ 1 + 2(1-\dhor)^2 - 2(1-\dhor) \cos(\hor_{i,j}) - 2(1-\dhor) \cos(\hor_{i+1, j})  
           + \cos(\hor_{i, j} + \hor_{i+1, j})\bigg]. 
       \end{split}
   \end{equation}
   Adding and subtracting $\sin^2(\hor_{i,j}) + \sin^2(\hor_{i+1, j}) -1 $ for each $(i,j) \in \INNhor$ yields
\begin{equation}
    \begin{split}
        \Ehor(u, \Omega)  =  &\eps^2\sum_{(i,j) \in \INNhor} \bigg[ 2 + 2(1-\dhor)^2 - 2(1-\dhor) \cos(\hor_{i,j}) - 2(1-\dhor) \cos(\hor_{i+1, j}) \\ & \qquad \qquad- \sin^2(\hor_{i,j}) - \sin^2(\hor_{i+1, j})\bigg] + \eps^2\sum_{(i,j) \in \INNhor}  q(\hor_{i, j}, \hor_{i+1, j}),
    \end{split}
\end{equation}
where 
\begin{equation}
    \label{def:q}
    q(\theta_1, \theta_2) \coloneqq \sin^2(\theta_1) + \sin^2(\theta_2) - (1-\cos(\theta_1 +\theta_2)) \quad \textup{ for } (\theta_1, \theta_2) \in \R^2.
\end{equation}
In turn, using $\sin^2 = 1 -  \cos^2$ we obtain
 \begin{equation}
     \begin{split}
          \Eisohor(u, \Omega) = & \eps^2\sum_{(i,j) \in \INNhor} \bigg[  2(1-\dhor)^2 - 2(1-\dhor) \cos(\hor_{i,j}) - 2(1-\dhor) \cos(\hor_{i+1, j}) \\ & \quad+ \cos^2(\hor_{i,j}) + \cos^2(\hor_{i+1, j})\bigg]
         + \eps^2 \sum_{(i,j) \in \INNhor}  q(\hor_{i, j}, \hor_{i+1, j})\\
         = & \eps^2\sum_{(i,j) \in \INNhor} (1-\dhor - \cos(\hor_{i,j}))^2 + \sum_{(i,j) \in \INNhor} (1-\dhor - \cos(\hor_{i+1,j}))^2 \\ &\quad  + \eps^2\sum_{(i,j) \in \INNhor}  q(\hor_{i, j}, \hor_{i+1, j}).
     \end{split}
 \end{equation}

Furthermore, we introduce 
\begin{equation}
    p(\theta_1, \theta_2) \coloneqq \begin{cases}
    \frac{\sin^2(\theta_1) +\sin^2(\theta_2) -(1-\cos(\theta_1 +\theta_2))}{2(\sin(\frac{\theta_2}{2}) - \sin(\frac{\theta_1}{2}))^2}\quad & \textup{ if } \theta_1 \neq \theta_2,\\
    1 & \textup{ else},
    \end{cases} \quad \textup{ for } (\theta_1, \theta_2) \in \R^2. \label{def:p}
\end{equation}
The equality
 \begin{equation}
     q(\theta_1, \theta_2) = 2 p(\theta_1, \theta_2) \left\vert \sin\left(\frac{\theta_2}{2} \right) - \sin\left(\frac{\theta_1}{2} \right) \right\vert^2 \quad \textup{ for } (\theta_1, \theta_2) \in [-\pi, \pi)^2
 \end{equation}
 and the definition of the chirality parameter $w^\Delta$ yield the equality
 \begin{equation}
    \eps^2 \sum_{(i,j) \in \INNhor}  q(\hor_{i, j}, \hor_{i+1, j}) = \dhor\eps^2\sum_{(i,j) \in \INNhor} p(\hor_{i,j}, \hor_{i+1, j})\left\vert w_{i+1, j}^\Delta- w_{i,j}^\Delta\right\vert^2.
 \end{equation}
 By the identity $1-\cos(\theta) = 2\sin^2(\theta/2)$ for $\theta \in \R$, we obtain 
 \begin{equation}
 \begin{split}
  & \eps^2\sum_{(i,j) \in \INNhor} (1-\dhor - \cos(\hor_{i,j}))^2 + \sum_{(i,j) \in \INNhor} (1-\dhor - \cos(\hor_{i+1,j}))^2\\
  & \quad = (\dhor)^2 \eps^2 \sum_{(i,j) \in \INNhor} \left(1- (w^\Delta_{i+1,j})^2 \right)^2 + (\dhor)^2 \eps^2 \sum_{(i,j) \in \INNhor} \left(1- (w^\Delta_{i,j})^2 \right)^2.\end{split}
 \end{equation}
Altogether, we obtain the reformulation
\begin{equation}
\begin{split}
    \Ehor(u, \Omega) = & \eps^2\sum_{(i,j) \in \INNhor} (1-\dhor- \cos(\hor_{i,j}))^2 + \eps^2\sum_{(i,j) \in \INNhor} (1-\dhor- \cos(\hor_{i+1,j}))^2 \\ &+\eps^2\sum_{(i,j) \in \INNhor} q(\hor_{i,j}, \hor_{i+1, j}),\\
     = & (\dhor)^2 \eps^2\sum_{(i,j) \in \INNhor} (1- (w_{i.j}^\Delta)^2)^2 + (\dhor)^2\eps^2\sum_{(i,j) \in \INNhor} (1-(w_{i,j}^\Delta)^2)^2\\
    & \quad + \dhor\eps^2\sum_{(i,j) \in \INNhor} p(\hor_{i,j}, \hor_{i+1, j})\left\vert w_{i+1, j}^\Delta- w_{i,j}^\Delta\right\vert^2,\end{split} \label{id:energy_hor}
\end{equation}
and the analogous statement holds for the vertical energy contribution $\Ever$. Assuming the periodic boundary conditions
\begin{equation}
   u_{0,k} \cdot u_{1, k} = u_{\lfloor \frac1\eps \rfloor - 1, k} \cdot u_{\lfloor \frac1\eps \rfloor, k} \quad  \textup{ and } \quad u_{k, 0}\cdot u_{k, 1}= u_{k, \lfloor \frac1\eps \rfloor -1} \cdot u_{k, \lfloor \frac1\eps \rfloor} \quad \textup{ for all } k = 0, \dots, \left\lfloor \frac1\eps\right\rfloor,   \label{gamma:spins:periodic_boundary}
\end{equation}
the reformulation of the horizontal energy contribution \eqref{id:energy_hor} yields 
\begin{equation}
\begin{split}
 \Ehor(u, \Omega) = &(\dhor)^2 \eps^2 \sum_{(i,j)\in \INNhor} (1- (w^{\Delta}_{i,j})^2)^2+ (\dhor)^2 \eps^2 \sum_{(i+1,j)\in \INNhor} (1- (w^{\Delta}_{i+1,j})^2)^2 \\
& + \dhor \eps^4 \sum_{(i,j)\in \INNhor} p(\hor_{i,j}, \hor_{i+1, j}) \vert \partial_1^{d,\eps} w^{\Delta}_{i,j}\vert^2\\
 = &2(\dver)^2 \eps^2 \sum_{(i,j)\in \INNhor} (\gamma^2- (\gamma w^{\Delta}_{i,j})^2)^2  + \dver \eps^4 \sum_{(i,j)\in \INNhor} p(\hor_{i,j}, \hor_{i+1, j}) \vert  \gamma \partial_1^{d,\eps} w^{\Delta}_{i,j}\vert^2.
\end{split}\label{gamma:spins:energie_one}
\end{equation}
Analogously, if \eqref{gamma:spins:periodic_boundary} holds, we obtain for the vertical energy contribution
\begin{equation}
\Ever(u, \Omega) = 2(\dver)^2 \eps^2 \sum_{(i,j)\in \INNver} (1- (z^{\Delta}_{i,j})^2)^2 + \dver \eps^4 \sum_{(i,j)\in \INNver} p(\ver_{i,j}, \ver_{i, j+1}) \vert \partial_2^{d,\eps} z^{\Delta}_{i,j}\vert^2. \label{spin:gamma:reformulation_vertical}
\end{equation}
Next, we define the set of spin fields with ferromagnetic Dirichlet boundary conditions as 
\[
\Sbound := \left\{ u \colon \Omega \cap\eps\Z^2 \rightarrow \mathbb{S}^1 \colon u(0, \cdot) = (0,1)^T \right\}.
\]

For sequences $(\eps_n)_n, (\delta_n^{hor})_n, (\dver_n)_n \subseteq (0,1)$, we will often write $E_n^{hor}$ and $E_n^{ver}$ instead of $E^{ver}_{\eps_n, \Delta_n}$ and $E^{hor}_{\eps_n, \Delta_n}$.
Additionally, we define the renormalized energy $\Hn\coloneqq \Hnhor + \Hnver \colon L^2(\Omega;\R^2) \to [0,+\infty]$ by 
\begin{equation}\label{eq: def Hn}
\Hn(w,z) \coloneqq \Hnhor(w) + \Hnver(z) \coloneqq \begin{cases}
\frac{E_n^{hor}(u, \Omega) + E_n^{ver}(u, \Omega) }{\sqrt{2} \eps_n  (\delta_n^{ver})^{3/2}}  &\text{if } \exists u \in \mathcal{S}_0(\Omega, \eps_n) \text{ that } \text{satisfies }\eqref{gamma:spins:periodic_boundary} \text{ and } \\ & \pc(T^{\Delta_n}(u)) = (w,z),  \\
    +\infty &\text{else.}
\end{cases}
\end{equation}
Lastly, set $\H: L^2(\Omega; \R^2) \to [0,+\infty]$ as 
\begin{equation}
    \H(w,z) = \begin{cases} \frac1\sigma\int_{\Omega} ( w^2 - \gamma^2)^2 + (z^2 - 1)^2  \, d\mathcal{L}^2 + \sigma \int_\Omega (\partial_1 w)^2 +(\partial_2 z)^2 \, d\mathcal{L}^2 & \text{if } (w,z) \in \dom(\H)\\
    + \infty &\text{else,}
    \end{cases}
\end{equation}
and its effective domain 
\[
\dom(\H)\coloneqq\left\{ (w,z) \in W^{1,2}(\Omega;\R^2) \colon \begin{array}{l}
     z(0, \cdot) = 0, \ \vert z(\cdot, 0) \vert = \vert z(\cdot, 1)\vert , \ \vert w(0, \cdot)\vert = \vert w(1, \cdot)\vert, \\
     \textup{and }\textup{curl}(w,z) = 0 
\end{array}  \right\}. 
\]

\subsection{Main Results}\label{sec:mainresults}

As a first result, we prove the following discrete-to-continuum $\Gamma$-convergence result for the renormalized energy.
\begin{theorem}[$\Gamma$-convergence and local compactness]
Let $(\eps_n)_n \subseteq (0, 1/2)$ and $ \ \Delta_n = (\delta_n^{hor}, \delta_n^{ver})$ satisfy 
\begin{equation}
\quad \eps_n \rightarrow 0, \quad  \Delta_n = (\dhor_n, \dver_n) \rightarrow (0,0), \quad \frac{\eps_n}{\sqrt{2\dver_n}} \rightarrow \sigma \in (0,\infty), \quad \text{ and } \quad \gamma_n^2 \coloneqq \frac{\dhor_n}{\dver_n} \rightarrow \gamma^2 \in [0,\infty).  
\end{equation}
Then the following statements are true:\label{thm:gamma}
\begin{itemize}
    \item[\textup{(i)}] Let $(w_n, z_n) \in L^2(\Omega;\R^2)$ be such that for all $n\in \N$  there holds $\Hn(w_n,z_n) \leq K$ for some constant $K > 0$. Then, there is a (not relabeled) subsequence $(w_n, z_n)$ which converges to some $(w,z) \in \textup{ dom }(\H)$ strongly in $L^2(\Omega;\R^2)$. 
    \item[\textup{(ii)}] If a sequence  $(w_n,z_n)_n \subseteq L^2(\Omega; \R^2)$ converges strongly in $L^2(\Omega;\R^2)$ to some $(w,z) \in L^2(\Omega;\R^2)$  then it holds 
    \[ \H(w,z) \leq  \liminf\limits_{n\rightarrow \infty} \Hn(w_n,z_n).\]
    \item[\textup{(iii)}]  For every $(w,z) \in L^2(\Omega;\R^2)$ there is a sequence $(w_n,z_n)_n \subseteq L^2(\Omega;\R^2)$ such that 
    \[ \limsup\limits_{n\rightarrow \infty} \Hn(w_n,z_n) \leq \H(w,z).\]
\end{itemize}
\end{theorem}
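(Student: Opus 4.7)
My strategy is the standard Modica--Mortola pattern adapted to the two-dimensional anisotropic, chirality-order reformulation \eqref{gamma:spins:energie_one}--\eqref{spin:gamma:reformulation_vertical}, with the twist that the limit must be curl-free. The energy bound $\Hn(w_n,z_n)\le K$ yields, after the prefactor $\sqrt{2}\eps_n(\dver_n)^{3/2}$ is absorbed, control of order $1/\sigma_n$ on the double-well term $(w_n^2-\gamma_n^2)^2+(z_n^2-1)^2$ and control of order $\sigma_n$ on the ``half-gradient'' terms $p(\hor_{i,j},\hor_{i+1,j})\,|\gamma_n\derhorn w_n|^2$ and $p(\ver_{i,j},\ver_{i,j+1})\,|\dervern z_n|^2$. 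The first task is to truncate to the \emph{good set} where the involved angles are of order $o(1)$; on this set $p\to 1$, the piecewise-affine extensions $\pa(w_n)$, $\pa(z_n)$ are controlled in $L^\infty$, and $\pa(w_n)$ carries an $L^2$ bound on $\partial_1$, while $\pa(z_n)$ carries an $L^2$ bound on $\partial_2$. Using \eqref{size-angles-vortex} the number of ``bad'' cells (those containing a large angle, and hence all vortices) is at most $C K \eps_n^{-1}\sqrt{\dver_n}$, which is $o(\eps_n^{-2})$ under the assumed scaling and therefore has vanishing area.

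\textbf{Step 1 (compactness, (i)).} From the double-well bound I deduce $w_n^2\to\gamma^2$ and $z_n^2\to 1$ in $L^2$, so any weak limit $(w,z)$ takes values in $\{\pm\gamma\}\times\{\pm 1\}$ a.e. For strong $L^2$ compactness I use a slicing argument: on a.e.\ horizontal line, $w_n(\cdot,y)$ has bounded $\dot H^1$ norm plus a double-well penalty, so by the one-dimensional Modica--Mortola compactness it is precompact in $L^2$ of the line; an analogous argument on vertical slices gives compactness of $z_n$. Fubini then upgrades this to $L^2(\Omega)$ compactness. To conclude $(w,z)\in\dom(\H)$ I must pass the discrete constraint $\curln((w_n,z_n))\in\{-2\pi/\eps_n,0,2\pi/\eps_n\}$ to the limit: outside the bad cells the curl vanishes, the measure of bad cells tends to zero, and the (rescaled) number of vortex cells is $O(K\sqrt{\dver_n}/\eps_n)\to 0$ by the scaling, so the singular part cannot survive; a standard duality pairing with test fields shows $\curl(w,z)=0$ in the sense of distributions. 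The boundary conditions on $z(0,\cdot)$ follow from the Dirichlet condition built into $\Sboundn$, while the periodicity traces transfer by continuity of the trace operator from $W^{1,2}$ in the relevant direction.

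\textbf{Step 2 (liminf, (ii)).} On the good set I use $p\ge 1-o(1)$ together with the inequality $4(a^2-b^2)^2\ge c|a-b|^2(a+b)^2$ to run, line by line, the classical one-dimensional lsc inequality
\[
\liminf_n\Bigl(\tfrac{1}{\sigma_n}\int (w_n^2-\gamma_n^2)^2+\sigma_n\int|\partial_1 w_n|^2\Bigr)\ge \tfrac{1}{\sigma}\int(w^2-\gamma^2)^2+\sigma\int|\partial_1 w|^2
\]
via Fatou, exploiting that the mixed horizontal-energy remainder from bad cells is nonnegative. The analogous inequality on vertical slices handles $z$. Summing the two contributions and again integrating with Fatou gives $\H(w,z)\le\liminf_n \Hn(w_n,z_n)$; note the limit is automatically in $\dom(\H)$ by Step 1.

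\textbf{Step 3 (limsup, (iii)).} Since $\H$ is $+\infty$ outside $\dom(\H)$ I may assume $(w,z)\in\dom(\H)$. The curl-free condition gives $(w,z)=\nabla\phi$ for a potential $\phi\in W^{2,2}(\Omega)$ with the prescribed traces. A standard mollification/truncation of $\phi$ (respecting the boundary data by a small cut-off near $\{x_1=0\}$) produces a smooth approximation $\phi_\eta$ with $\nabla\phi_\eta\to(w,z)$ strongly in $W^{1,2}$ and with the double-well penalty decreasing to $\H(w,z)$ up to $\eta$. I then discretize: prescribe $\hor_{i,j}:=\sqrt{2\dhor_n}\,\partial_1\phi_\eta(i\eps_n,j\eps_n)\cdot\tfrac12$ modulo a correction so that $\sin(\hor/2)=\sqrt{\dhor_n/2}\,w_\eta$ holds exactly, and analogously $\ver_{i,j}$ from $z_\eta$. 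The angles are small (of order $\sqrt{\dver_n}$), so $p\to 1$ and the discrete curl of $(\hor,\ver)$ matches the smoothness of $\phi_\eta$ up to an $O(\eps_n)$ error that I cancel by pointwise adjustments in a negligible number of cells. Then I define the spin field by integrating the angular velocities starting from $u(0,\cdot)=(0,1)^T$, and verify $\pc(T^{\Delta_n}(u_n))\to(w_\eta,z_\eta)$ in $L^2$, with energy converging to $\H$ evaluated on the smooth approximation. A diagonal argument in $\eta$ finishes the recovery.

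\textbf{Main obstacle.} The delicate part is Step 1: one only has gradient control in one direction for each of $w_n$, $z_n$, so the curl-free limit is what couples the two scalar slicing arguments into genuine $L^2(\Omega)$ compactness; simultaneously one must show that the discrete curl defect (vortices plus large-angle cells) disappears in the limit, which rests on the quantitative counting estimate derived from \eqref{size-angles-vortex} and the scaling $\eps_n/\sqrt{2\dver_n}\to\sigma$. In the recovery sequence, the corresponding obstacle is to realize the prescribed chirality values \emph{exactly} by a spin field, i.e.\ to enforce the discrete $\mathbb{S}^1$-compatibility (the curl lying in $\eps_n^{-1}\{-2\pi,0,2\pi\}$) without paying extra energy; this requires the local corrections referred to above.
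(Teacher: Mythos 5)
There are genuine gaps, and the most serious one is a misidentification of the scaling regime. You write that the energy bound forces $w_n^2\to\gamma^2$ and $z_n^2\to 1$ in $L^2$, so that any weak limit takes values in $\{\pm\gamma\}\times\{\pm 1\}$, and you then invoke one-dimensional Modica--Mortola compactness on slices. That reasoning is correct only in the degenerate regime $\eps_n/\sqrt{2\dver_n}\to 0$ (the sharp-interface limit of Remark~\ref{rem:marco}). Here $\sigma_n := \eps_n/\sqrt{2\dver_n}\to \sigma\in(0,\infty)$, so the double-well term is merely \emph{bounded}, not vanishing: from $\Hn\le K$ you only get $\int (w_n^2-\gamma_n^2)^2\le \sigma_n K$, which stays of order one. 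Correspondingly, $\dom(\H)$ consists of curl-free $W^{1,2}$ fields, not $BV$ fields with values in a finite set, and the limit functional is itself a Modica--Mortola energy, not a perimeter. The compactness tool the paper actually uses is completely different: after establishing curl-freeness, it constructs scalar potentials $u_n$ with $\nabla u_n$ equal to (a rescaling of) the piecewise affine interpolations of the angles, shows they are uniformly bounded in the auxiliary space $\mathcal{W}$ (hence in $W^{2,2}$ by the embedding Theorem~\ref{th:embedding}), and invokes Rellich--Kondrachov; strong $L^2$ convergence of $(w_n,z_n)$ then follows by showing the difference to $\nabla u_n$ vanishes. Your slicing plan does not produce this.

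Your vortex estimate is also off in a way that breaks the argument. From Lemma~\ref{lem:vortices-estimates} and the fact that $\Eiso\le \sqrt2\,\eps_n(\dver_n)^{3/2} K$, one gets $\mathcal{H}^0(V_n)\lesssim K(\dver_n)^{3/2}/\eps_n$, which tends to \emph{zero} because $(\dver_n)^{1/2}/\eps_n\to 1/(\sqrt2\sigma)$ and $\dver_n\to 0$. That is essential: it means the sequence is eventually vortex-free, so the discrete curl vanishes identically and a potential exists. You state instead the bound $CK\eps_n^{-1}\sqrt{\dver_n}$, which is only $O(1)$. An $O(1)$ number of vortices each carries a fixed topological charge $\pm 2\pi$, so ``vanishing area of bad cells'' does not by itself let a distributional duality argument kill the curl, and it certainly does not give the exact discrete curl-freeness needed to define $u_n$.

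Your limsup outline also leans on a step that the paper deliberately avoids. You propose to discretize both components from the smooth potential and then repair an $O(\eps_n)$ discrepancy in the discrete curl ``by pointwise adjustments in a negligible number of cells,'' but those curl errors are not localized --- they accumulate along lattice paths, and $\Sbound$ requires the discrete curl to lie exactly in $\eps_n^{-1}\{-2\pi,0,2\pi\}$. The paper instead fixes $\hor_n$ from $w$ (via $2\arcsin$), and then \emph{defines} $\ver_n$ by telescoping horizontal differences (see \eqref{rec:def:vertical_vel}), so that the discrete curl is zero identically by construction; it then verifies, using $\curl(w,z)=0$ and smoothness, that $\ver_n$ converges to the desired $z$. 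You would need to adopt some such exact-gauge construction rather than a-posteriori corrections.

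The remaining ingredients you mention (continuity and lower bound for $p$, boundary trace transfer, density plus diagonal argument) are consistent with the paper's route, but the three points above --- regime misidentification, the insufficient vortex count, and the non-constructive curl repair --- are genuine gaps.
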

\begin{proof}
   Combining Proposition \ref{prop:lower_gamma_per} and Proposition \ref{gamma:spins:upper} yields the claim. 
\end{proof}
\begin{remark}
Recall that $H_n$ yields finite energy only on configurations satisfying the periodicity condition \eqref{gamma:spins:periodic_boundary}. Hence, by  \eqref{gamma:spins:energie_one} and \eqref{spin:gamma:reformulation_vertical}, minimizers of $H_n$ correspond to minimizers of the original energy $I_{\alpha, \eps}$ from \eqref{eq:J1J3model} under the periodicity constraint \eqref{gamma:spins:periodic_boundary}. We do not treat the $\Gamma$-limit of the original energy without the periodicity constraint.
\end{remark}
\begin{remark}\label{rem:marco}
    In \cite{B11-2019-CFO} a $\Gamma$-convergence result for the isotropic case $\delta_n=\delta_n^{hor}=\delta_n^{ver}$ is proven in the limit $\eps_n/(2\delta_n)^{1/2}\rightarrow 0$ as $n\rightarrow \infty $ with respect to strong convergence in $L^1_{loc}(\Omega;\R^2)$. The $\Gamma$-limiting functional is of (anisotropic) perimeter-type, 
    \[ H((w,z), \Omega) \coloneqq \frac43\left( \vert D_1 w\vert(\Omega) + \vert D_2 z\vert (\Omega) \right)\]
    for $(w,z) \in BV(\Omega;\R^2)$ with $(w,z)\in \{-1, 1\}^2$ and $\textup{curl}((w,z)) = 0$ in a distributional sense, and $H((w,z),\Omega)= \infty$ otherwise. In the Master's thesis \cite{FoMaster2018Discrete-to-Continuum}, a corresponding anisotropic result is proven.   
\end{remark}
\begin{remark}
    We note that one-dimensional analogs of Theorem \ref{thm:gamma} and Remark \ref{rem:marco} have been provided in \cite{B11-2015-CS}.
\end{remark}

Our second main result concerns the scaling law for the minimal discrete energy in the case that $\dver = \dhor$. 
In certain parameter regimes for $\eps$ and $\Delta$ the minimal energy of the discrete energy behaves essentially as the minimal energy of the continuum $\Gamma$-limit $H_{\sigma,1}$ which was (up to technical simplifications) studied in \cite{GiZw22}. In particular, in this regime minimizers are curl-free, and we prove matching upper and lower bounds for the minimal energy. 
On the other hand, we show that there is a complementary parameter regime in which minimizers necessarily are not curl-free and vortices occur (for a definition we refer to \eqref{def:vortices}), see Figure \ref{fig:vortex} for a sketch of such a profile. In this regime, we provide an upper bound but we do not yet have a matching lower bound in the full parameter regime.  
	\begin{theorem}[Scaling law of the discrete model]\label{thm:scaling} Let $s\colon (0,1/2)\times(0,1) \rightarrow (0,\infty)$ be given by
		\[ s(\eps,\delta) \coloneqq \min\left\{ \delta^2,\  \eps\delta^{3/2} \left( \left\vert \ln\left( \frac{\eps}{\delta^{1/2}}\right) \right\vert +1\right),\  \eps\delta^{1/2}\right\}. \quad 
  \] 
	The following statements are true for $\delta := \dver = \dhor$.
	\begin{itemize}
	    \item[(i)] There is a constant $C > 0$ such that for all $(\eps, \delta) \in (0,1/2) \times (0,1)$ there holds
	    \[ \min \left\{ \Eiso(u; \Omega) \colon u \in \Sbound \right\} \leq C s(\eps,\delta). \]
	    \item[(ii)] There is $\overline{\delta}\in (0,1)$ and a constant $c > 0$, depending only on $\overline{\delta},$ such that for all\\ $(\eps, \delta) \in (0,1/2) \times [\overline{\delta}, 1)$ it holds
	\begin{equation*}
	   c \eps\delta^{1/2} \leq \min \left\{ \Eiso(u; \Omega) \colon u \in \Sbound \right\},
	\end{equation*}
	and for all $\delta\in (0, \overline{\delta})$ and $\eps \in [\delta, 1/2)$ it holds
 \[ c \min\left\{ \delta^2, \eps\delta^{3/2} \left( \left\vert \ln\left( \frac{\eps}{\delta^{1/2}}\right) \right\vert +1\right)\right\} \leq  \min \left\{ \Eiso(u; \Omega) \colon u \in \Sbound \right\}. \] 
	\end{itemize}
   \end{theorem}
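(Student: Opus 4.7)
I treat (i) and (ii) separately. For (i) the plan is to exhibit three competitors, one for each of the three scalings in the minimum defining $s(\eps,\delta)$, and take the smallest of their energies. For (ii) the plan splits into a ``large-$\delta$'' case, where only the contribution of the boundary column matters, and a more involved ``small-$\delta$ with $\eps\geq \delta$'' case, where one reduces the discrete problem to the continuum lower bound for $\H$ from \cite{GiZw22} by controlling vortices.

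\emph{Upper bound.} The ferromagnetic competitor $u\equiv (0,1)^T$ gives $|u_{i,j}-2(1-\delta)u_{i+1,j}+u_{i+2,j}|^2=4\delta^2$ at every site, so $\Eiso(u,\Omega)\lesssim \delta^2$ after multiplying by $\eps^2/2$ and summing over the $O(\eps^{-2})$ cells. The branching competitor (Fig.~\ref{fig:helimagnet}) is a discrete version of the self-similar constructions from \cite{GiZw22}: decompose $\Omega$ into horizontal strips whose widths decrease geometrically from a coarse scale down to $\eps$ as $x_1\to 0$; on each strip use a helical profile with $(\hor,\ver)\in \{\pm\opt^{hor}\}\times\{\pm\opt^{ver}\}$ of alternating chirality in $x_2$; choose strip interfaces so that the induced chirality field is discretely curl-free; and replace the helical profile by the constant ferromagnetic datum within an $\eps$-thin boundary layer. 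Summing an $O(\eps\delta^{3/2})$ cost per refinement level over the $O(|\ln(\eps/\sqrt{\delta})|)$ levels reproduces the branching scaling. The vortex competitor (Fig.~\ref{fig:vortex}) places $O(\eps^{-1})$ vortices along a column near $\{x_1=0\}$; each absorbs a $\pm 2\pi$ rotation inside a single $\eps$-cell, allowing the ferromagnetic datum to be bridged to a helical bulk pattern without a long-range transition, for a total cost $O(\eps\delta^{1/2})$.

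\emph{Lower bound.} For $\delta\in [\overline{\delta},1)$ the bound follows from column $\{i=0\}$ alone: since $u_{0,j}=(0,1)^T$ for every $j$, each summand of $\Eisover$ along this column equals $\frac{\eps^2}{2}|2\delta u_{0,j}|^2=2\eps^2\delta^2$, giving $\Eisover\geq 2\eps\delta^2\geq 2\overline{\delta}^{3/2}\eps\delta^{1/2}$. For $\delta\in (0,\overline{\delta})$ and $\eps\geq \delta$ I would argue in two steps. First, by \eqref{size-angles-vortex} every vortex forces at least one chirality value with $|w^{\Delta}_{i,j}|\geq 1/\sqrt{\delta}$ or $|z^{\Delta}_{i,j}|\geq 1/\sqrt{\delta}$, which via the double-well reformulation \eqref{id:energy_hor} contributes at least $c\eps^2$ to $\Eiso$; for $\overline{\delta}$ small and $\eps\geq \delta$ this already exceeds both $\delta^2$ and $\eps\delta^{3/2}(|\ln(\eps/\sqrt{\delta})|+1)$, so any configuration with energy below the claimed lower bound must be vortex-free and hence $\pa(\T(u))$ is curl-free. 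Second, to this curl-free piecewise-affine interpolation one applies an adaptation of the continuum branching lower bound for $\H$ from \cite{GiZw22}, then rescales through the normalization $\sqrt{2}\,\eps\delta^{3/2}$ to conclude.

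\emph{Main obstacle.} The hardest step is the second one of the lower bound: adapting the continuum branching lower bound for $\H$ into a uniform pointwise inequality at the discrete level, rather than only asymptotically via the $\Gamma$-convergence of Theorem \ref{thm:gamma}. A related difficulty on the upper-bound side is the discrete realization of the branching construction, where the compatibility relation \eqref{intr:curl_cond} must be verified on every single $\eps$-cell, including at the junctions between strips of opposite chirality and inside the refinement zone near $\{x_1=0\}$.
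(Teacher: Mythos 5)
Your upper-bound plan matches the paper's Proposition~\ref{lem:upper}: ferromagnet for $\delta^2$, a mollified discrete self-similar branching construction for $\eps\delta^{3/2}(|\ln(\eps/\delta^{1/2})|+1)$, and a vortex column for $\eps\delta^{1/2}$. You are right that the compatibility relation \eqref{intr:curl_cond} must be verified cell-by-cell; the paper handles this by building the branching construction from an exactly curl-free piecewise-constant field and then mollifying it (so curl-freeness is preserved), and for the vortex competitor by explicit bookkeeping of the $2\pi$ jump in each building block.

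Your large-$\delta$ lower bound via the $\{i=0\}$ column is correct and is in fact \emph{simpler} than the paper's route. You get $\Eisover\geq \eps\delta^2$ (up to a constant), and hence $\Eiso\gtrsim\overline\delta^{3/2}\eps\delta^{1/2}$ whenever $\delta\geq\overline\delta$. The paper explicitly acknowledges this elementary bound in the paragraph before Lemma~\ref{lem:lower_close_to_one} but deliberately re-derives the result ``without considering the energy contained in the left boundary,'' leading them to split into $\delta\in(63/64,1)$ (Lemma~\ref{lem:lower_close_to_one}) and $\delta\in[\overline\delta,63/64)$ (Lemma~\ref{lem:lower_close_inbetween}) via a more involved angular analysis. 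Both routes are valid; yours is shorter, the paper's avoids relying solely on boundary energy.

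The genuine gap is in the small-$\delta$ lower bound. Your Step~1 (vortex exclusion) is essentially the paper's strategy: via \eqref{size-angles-vortex} a vortex forces a chirality value with $|w^\Delta|$ or $|z^\Delta|$ of size $\delta^{-1/2}$, the double-well term then contributes $\gtrsim\eps^2$, and for $\eps\geq\delta$ with $\overline\delta$ small this already exceeds the target scaling, so one may assume vortex-freeness (the paper implements this via Lemma~\ref{lem:vortices-estimates} and the assumption $\Eiso\leq c_1 s(\eps,\delta)$, concluding $\mathcal{H}^0(A^{hor})=\mathcal{H}^0(A^{ver})=0$). But your Step~2 is only a pointer: ``apply an adaptation of the continuum branching lower bound from \cite{GiZw22}.'' That adaptation is exactly the content of Lemma~\ref{lem:lower_bound_no_vortices}, and it is not a black box: one must run the slicing argument at the discrete level — showing that on each column $i\eps\gtrsim a(\eps,\delta)$ either the bulk term dominates ($\gtrsim\delta^2$ per column) or the interfacial term does ($\gtrsim\eps\delta^{3/2}/(i\eps)$ per column), using the coarea formula on the piecewise-affine interpolant of $\ver_{i,\cdot}$, a Poincar\'e-type estimate via the discrete potential $u(i\eps,j\eps)=\frac{\eps}{\opt}\sum_{k<i}\hor_{k,j}$, and a careful passage between continuum superlevel sets and discrete ones. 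You correctly flag this as ``the hardest step,'' but identifying the obstacle is not the same as overcoming it, and the proposal does not supply the discrete estimate that makes the argument uniform in $(\eps,\delta)$ rather than merely asymptotic (which is all $\Gamma$-convergence would give). As written, the lower bound in the regime $\delta\in(0,\overline\delta)$, $\eps\geq\delta$ is not proved.
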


\label{chapter:scaling}

   \begin{proof}
       The result follows directly from Proposition \ref{lem:upper} and Proposition \ref{prop:lower}. 
   \end{proof}
 
   \begin{remark}\label{rem:scaling_function_regimes} 
   Let $\textup{e} \coloneqq \exp(1)$. Then for $(\eps, \delta)\in (0,1/2)\times (0,1)$, we have
   \begin{equation}
       s(\eps,\delta) = \begin{cases}
       \delta^2 \quad & \textup{ if } \delta^{1/2} \leq \eps,\\
       \eps\delta^{3/2} \left( \left\vert \ln\left( \frac{\eps}{\delta^{1/2}}\right) \right\vert +1\right) \quad & \textup{ if } \textup{e}\cdot \delta^{1/2} \exp(-1/\delta) \leq \eps\leq \delta^{1/2},\\
       \eps\delta^{1/2} & \textup{ else}.
       \end{cases}\nonumber 
   \end{equation} 
   \end{remark}
   \begin{remark}
   The decisive step in the proof of the lower bound for $\delta \in (0, \overline{\delta})$ (see Section \ref{sec:lower} and (ii) of the Theorem above) is to show that the claimed lower bound holds for all spin fields $u\in \Sbound$ that are vortex-free. It follows from Lemma \ref{lem:vortices-estimates} below that the number of vortices $\mathcal{H}^0(V)$ of $u$ can be bounded by
    \[ \mathcal{H}^0(V)\leq \frac{\Eiso(u, \Omega)}{\eps^2}. 
    \] 
    Hence, it turns out that the claimed lower bound follows for a larger set of parameters than stated in Theorem \ref{thm:scaling} (ii), see Remark \ref{rem:lowerbound2} for a more quantitative statement.
   \end{remark}

\section{Preliminaries}\label{sec:preliminaries}
In the first part of this section we prove that  the space
\begin{equation}
    \label{gamma:laplace_sobolev} \mathcal{W} \coloneqq \left\{ u\in W^{1,2}(\Omega) \colon \, \partial_{11}u, \partial_{22}u\in L^2(\Omega) \textup{ and }\partial_{12} u, \partial_{21}u \in L^2_{loc}(\Omega)\right\} 
\end{equation}
endowed with the norm
\[ \Vert u\Vert_{\mathcal{W}} =\Vert u \Vert_{W^{1,2}(\Omega)} + \Vert \partial_{11} u\Vert_{L^2(\Omega)}+\Vert \partial_{22} u\Vert_{L^2(\Omega)} \]
is continuously embedded in the Sobolev space $W^{2,2}(\Omega)$.\\ In the second part of this section, we prove several properties of the function $p$ (see \eqref{def:p}). \\

\quad\textbf{Continuous embedding of $\mathcal{W}$ into $W^{2,2}(\Omega)$.} 
For an open, bounded Lipschitz set $\calD \subseteq \R^2$, we set 
\[
\calW_0(\calD):=\left\{u\in W^{1,2}_0(\calD): \partial_{11}u, \partial_{22}u\in L^2(\calD) \textup{ and }\partial_{12} u, \partial_{21}u \in L^2_{loc}(\calD)\right\} 
\]
equipped with the norm $\| u\|_{\calW_0( \calD)} \coloneqq \Vert u \Vert_{W^{1,2}(\calD)} + \Vert \partial_{11} u\Vert_{L^2(\calD)}+\Vert \partial_{22} u\Vert_{L^2(\calD)}$.
\begin{theorem}\label{th:embedding}
 The space $\calW$ defined in \eqref{gamma:laplace_sobolev} is continuously embedded in $W^{2,2}(\Omega)$ i.e., there exists a constant $C>0$ such that for all $u\in \calW$ there holds
 \[\|u\|_{W^{2,2}(\Omega)}\leq C\|u\|_{\calW}. \]
\end{theorem}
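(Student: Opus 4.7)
My plan is to extend $u$ to a compactly supported function on $\R^2$ whose pure second derivatives are still in $L^2$ globally, and then use Plancherel on $\R^2$ to read off the required control on the mixed derivative $\partial_{12}u$.

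The extension proceeds in two steps, one for each coordinate direction. In the $x_1$-direction, for $x_1 \in (-\tfrac12, 0)$, I set
\[
u^*(x_1, x_2) \coloneqq 3\, u(-x_1, x_2) - 2\, u(-2x_1, x_2),
\]
which is a Hestenes-type reflection across $\{x_1 = 0\}$. The coefficients $(3,-2)$ are chosen to enforce matching of both $u$ and $\partial_1 u$ at $x_1 = 0$. This $C^1$-matching is the critical point: a naive even reflection would produce a Dirac mass in the distributional $\partial_{11}u^*$ from the jump in $\partial_1 u^*$ across $\{x_1 = 0\}$, whereas with the Hestenes coefficients $\partial_1 u^*$ is continuous, so the distributional $\partial_{11}u^*$ coincides with the classical a.e.-defined derivative (bounded, possibly jumping, but singular-part free). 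Because the reflection is a linear combination of rescaled copies of $u$, a change of variables bounds each of $\|\partial_1^a \partial_2^b u^*\|_{L^2}$ (for $a,b \in \{0,1,2\}$ with $ab = 0$ and $a+b \leq 2$) by the corresponding norm of $u$ over $\Omega$; crucially, the $x_2$-derivatives pass through the reflection unchanged. After an analogous reflection across $\{x_1 = 1\}$ and multiplication by a smooth $x_1$-cutoff supported in $(-\tfrac12, \tfrac32)$, I obtain a function on $\R \times (0,1)$ with compact $x_1$-support and the same $L^2$-bounds. Repeating the construction in the $x_2$-direction yields a compactly supported $w : \R^2 \to \R$ with $w|_\Omega = u$ and
\[
\|w\|_{L^2(\R^2)} + \|\partial_{11} w\|_{L^2(\R^2)} + \|\partial_{22} w\|_{L^2(\R^2)} \leq C\|u\|_\mathcal{W}.
\]

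With $w$ in hand, I conclude by Fourier analysis on $\R^2$. Since $\partial_{11}w, \partial_{22}w \in L^2(\R^2)$ as tempered distributions, Plancherel yields $\xi_i^2 \widehat{w} \in L^2(\R^2)$ for $i = 1,2$. The elementary inequality $2|\xi_1\xi_2|^2 \leq \xi_1^4 + \xi_2^4$ then gives $\xi_1\xi_2\widehat{w} \in L^2(\R^2)$, i.e.\ $\partial_{12}w \in L^2(\R^2)$ with
\[
\|\partial_{12}w\|_{L^2(\R^2)}^2 \leq \tfrac12 \bigl(\|\partial_{11}w\|_{L^2(\R^2)}^2 + \|\partial_{22}w\|_{L^2(\R^2)}^2\bigr).
\]
Restricting to $\Omega$ gives $\partial_{12}u \in L^2(\Omega)$ (and $\partial_{21}u$ by symmetry), which, combined with the already-given bounds on $u$, $\nabla u$, $\partial_{11}u$, and $\partial_{22}u$, delivers the desired inequality $\|u\|_{W^{2,2}(\Omega)} \leq C\|u\|_\mathcal{W}$.

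The main obstacle is the extension step: one must simultaneously preserve $L^2$-integrability of both pure second derivatives while avoiding singular distributional contributions at the reflection axes. This rules out a simple even or odd reflection and forces the use of a $C^1$-matching formula such as the Hestenes one above; the resulting algebra is straightforward but must be iterated carefully in $x_1$ and $x_2$ (including at the four corners of $\Omega$, where the order of the two reflections has to be compatible with the cutoffs). Once $w$ is constructed, the Fourier step is immediate; in particular, the hypothesis $\partial_{12}u, \partial_{21}u \in L^2_{\mathrm{loc}}(\Omega)$ in the definition of $\mathcal{W}$ is not explicitly needed by this argument, as the mixed derivative is produced directly from the control on the pure second derivatives of the extension.
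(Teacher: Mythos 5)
Your proof is correct, and it follows the same extend--then--regularize template as the paper's proof of Theorem \ref{th:embedding} (via Lemma \ref{prop:extension}), but it differs meaningfully at both stages and is more self-contained. For the regularity step, the paper extends $u$ to a function in $\calW_0(B_{3/2}(1/2,1/2))$ and then invokes elliptic regularity for the Dirichlet problem on the ball (citing Brezis, Th\'eor\`eme IX.25) to convert control of $\partial_{11}Eu$ and $\partial_{22}Eu$ into full $W^{2,2}$ control. You instead extend to a compactly supported function on all of $\R^2$ and conclude by Plancherel together with the pointwise inequality $2|\xi_1\xi_2|^2\leq\xi_1^4+\xi_2^4$; this replaces a boundary-value regularity theorem by an elementary Fourier computation and, as a byproduct, makes visible why the hypothesis $\partial_{12}u,\partial_{21}u\in L^2_{\mathrm{loc}}(\Omega)$ in the definition of $\calW$ is never quantitatively used (it is not used by the paper's proof either). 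For the extension step, the paper uses a three-term Hestenes reflection whose coefficients $(c_1^L,c_2^L,c_3^L)$ solve the system \eqref{linear-system}, matching $u$, $\partial_1u$, and $\partial_{11}u$ across the reflection axis (following Triebel), whereas you use the minimal two-term reflection $3u(-x_1,\cdot)-2u(-2x_1,\cdot)$, matching only $u$ and $\partial_1u$. The $C^1$-matching is indeed all that is needed to rule out a Dirac contribution in $\partial_{11}$, so the lighter reflection suffices, and your interpolation from $\|w\|_{L^2}+\|\partial_{11}w\|_{L^2}+\|\partial_{22}w\|_{L^2}$ to the first derivatives and the cutoff commutators goes through because $u\in W^{1,2}(\Omega)$. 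The one point left implicit in your write-up, which is worth stating, is that the matching at $\{x_1=0\}$ presupposes that $\partial_1u$ has a well-defined $L^2$-trace there; this holds because $\partial_1u,\partial_{11}u\in L^2(\Omega)$ and the trace is taken on a side of the square transverse to the $x_1$-direction, exactly as the paper notes at the start of its proof of Lemma \ref{prop:extension}.
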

 \begin{proof}
 The proof relies on elliptic regularity and the extension result in Lemma \ref{prop:extension}.
 Precisely, for $u\in \calW$ by Proposition \ref{prop:extension} below, there exists an extension  $Eu\in\calW_0(B_{3/2}((1/2,1/2)))$ such that 
 \begin{equation}
 \label{eq:propext}
\|Eu\|_{\calW_0( B_{3/2}(\frac 12,\frac 12))}\leq C\|u\|_{\calW}
 \end{equation}
 with a constant $C>0$ independent of $u$. Since by elliptic regularity, e.g.~\cite[Theoreme IX.25 ]{Brezis:05},
 \begin{equation}\label{eq:ellipticreg}
 \|Eu\|_{W^{2,2}( B_{3/2}(\frac 12,\frac 12))}\leq C \|Eu\|_{\calW_0( B_{3/2}(\frac 12,\frac 12))},
 \end{equation}
 we obtain combining  \eqref{eq:propext} and \eqref{eq:ellipticreg}
 \begin{equation}
 \|u\|_{W^{2,2}(\Omega)}\leq \|Eu\|_{W^{2,2}( B_{3/2}(\frac 12,\frac 12))}
 \leq C\|Eu\|_{\calW_0( B_{3/2}(\frac 12,\frac 12))}
 \leq C \|u\|_{\calW}.
 \end{equation}
 \end{proof}
 
 \begin{lemma}
 \label{prop:extension}
 There is a continuous linear extension operator $E:\calW\to \calW_0(B_{3/2}((1/2,1/2))$ i.e., there is a constant $C>0$ such that for all  $u\in \calW$ we have $E(u)_{|\Omega}=u$ and
 \[\|Eu\|_{\calW_0( B_{3/2}(\frac 12,\frac 12))}\leq C\|u\|_{\calW}.\]
 \end{lemma}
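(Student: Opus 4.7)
The plan is to construct the extension in two stages: first extend $u$ across each side of $\partial\Omega$ by a higher-order reflection into a slightly larger open square $\tilde{\Omega} \supset \overline{\Omega}$, and then multiply the result by a smooth cutoff function to force compact support in $B_{3/2}((1/2,1/2))$. The delicate point is that since $\partial_{12}u$ lies only in $L^2_{loc}(\Omega)$, I cannot use pointwise gradient values of $u$ on $\partial\Omega$ in the reflection formula; the reflection must instead be built from values of $u$ alone, and must be calibrated so that matching of the function and of its first-derivative trace across the reflection line happens automatically.

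For the reflection across the side $\{y=0\}$, I would use the formula $\tilde{u}(x,y) := 4u(x,-y/2) - 3u(x,-y)$ for $y\in(-r,0)$ and some small $r>0$. The coefficients $(4,-3)$ together with the scalings $(-y/2,-y)$ are chosen so that $\tilde{u}(x,0^-) = u(x,0^+)$ and $\partial_y \tilde{u}(x,0^-) = \partial_y u(x,0^+)$ in the trace sense, yielding $\tilde{u}\in W^{1,2}$ of the slab $(0,1)\times(-r,1)$. A direct computation of $\partial_{ii}\tilde{u}$ for $i=1,2$ combined with a linear change of variables gives $\|\partial_{ii}\tilde{u}\|_{L^2((0,1)\times(-r,0))}\leq C\|\partial_{ii}u\|_{L^2(\Omega)}$. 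Performing the analogous reflections successively across the three remaining sides extends $u$ to a function $\tilde{u}\in W^{1,2}(\tilde{\Omega})$ on $\tilde{\Omega}:=(-r,1+r)^2$ with $\partial_{11}\tilde{u},\partial_{22}\tilde{u}\in L^2(\tilde{\Omega})$ and norms bounded by a constant times $\|u\|_{\mathcal{W}}$; corner regions are handled automatically because each successive reflection reads only values of the already-extended function on one side of the new reflection line.

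Next, I would fix a cutoff $\chi\in C_c^\infty(B_{3/2}((1/2,1/2)))$ with $\chi\equiv 1$ on a neighborhood of $\overline{\Omega}$ contained in $\tilde{\Omega}$, and define $Eu:=\chi\tilde{u}$ (extended by zero outside $\tilde{\Omega}$). The product rule together with the reflection bounds yields
\[ \|Eu\|_{W^{1,2}(B_{3/2})} + \|\partial_{11}Eu\|_{L^2(B_{3/2})} + \|\partial_{22}Eu\|_{L^2(B_{3/2})} \leq C\|u\|_{\mathcal{W}}, \]
while the compact support of $\chi$ ensures $Eu\in W^{1,2}_0(B_{3/2}((1/2,1/2)))$. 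The remaining condition $\partial_{12}Eu\in L^2_{loc}(B_{3/2})$ needed for membership in $\mathcal{W}_0(B_{3/2}((1/2,1/2)))$ then follows at once from elliptic regularity on the smooth ball: $Eu\in W^{1,2}_0$ together with $\Delta Eu\in L^2(B_{3/2})$ implies $Eu\in W^{2,2}(B_{3/2})$.

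The main obstacle I anticipate is the design of the reflection formula itself: it must use only values of $u$ (since the first-derivative trace of $u$ on $\partial\Omega$ is accessible but the mixed second-derivative trace is not), while simultaneously guaranteeing the $W^{1,2}$ matching condition across the reflection line and preserving $L^2$ bounds on both $\partial_{11}$ and $\partial_{22}$. The two-term ansatz with coefficients $(4,-3)$ and scalings $(-y/2,-y)$ is the minimal higher-order reflection that meets these constraints simultaneously. Once it is in place, the remaining steps -- iteration over the four sides, multiplication by the cutoff, and the elliptic-regularity argument for $\partial_{12}$ -- are essentially routine.
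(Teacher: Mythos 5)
Your proposal is correct and follows essentially the same approach as the paper: a higher-order reflection of Hestenes/Triebel type across each side of $\partial\Omega$ using only values of $u$ (calibrated so that $u$ and its normal derivative match in the trace sense, which is all that is needed for the piecewise-defined $\partial_{11}$, $\partial_{22}$ to coincide with the distributional ones), followed by multiplication with a smooth cutoff and a product-rule estimate. The only cosmetic difference is that you use the minimal two-term reflection $4u(x,-y/2)-3u(x,-y)$ matching $u$ and $\partial_\nu u$, whereas the paper invokes Triebel's three-term variant which additionally matches the trace of the second normal derivative; both are valid here.
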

 \begin{proof}
Let us first note that due to the specific geometry of $\Omega$, for $u \in \calW$ the partial derivatives $\partial_1 u$ and $\partial_2 u$ have traces in $L^2(\{0,1\} \times (0,1))$ and $L^2((0,1) \times \{0,1\})$, respectively.  

 \begin{figure}[h]
 \centering 
\includegraphics[scale = 0.6]{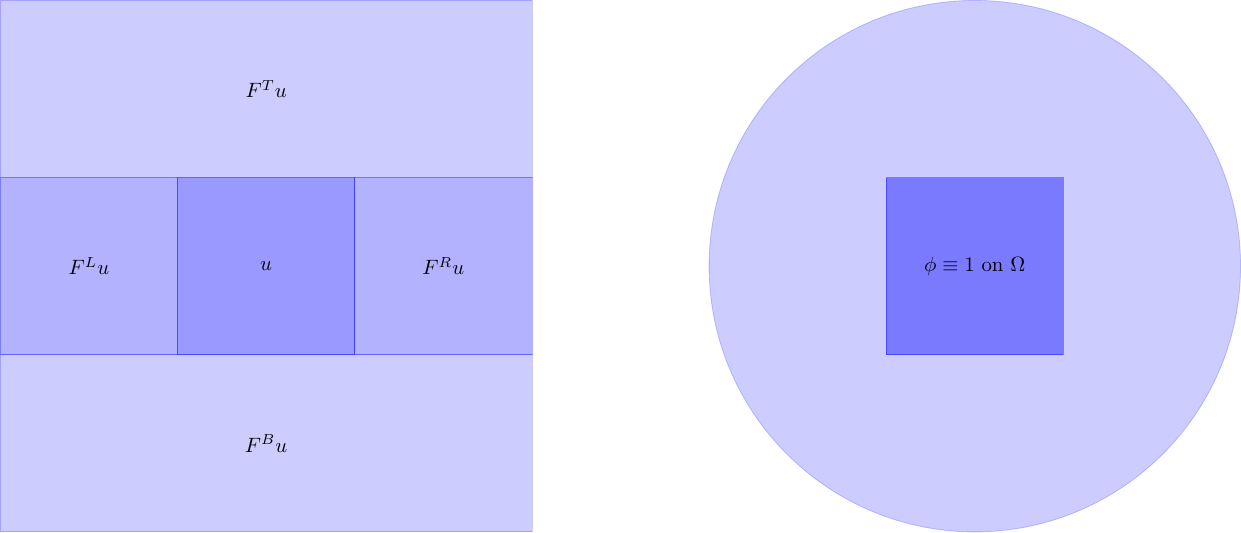}
\caption{Left: Sketch of the construction of the extension operator $F$. We first extend a function $u\in \calW (\Omega)$ by a reflection argument from the dark-blue square to the left and right blue squares by $F^Lu$ and $F^Ru$. In a second step, the left and right extended function is extended similarly as $F^Tu$ and $F^Bu$ to the the upper and lower light-blue rectangles. Right: Sketch of $\phi\in C_c^\infty( B_{3/2}((1/2, 1/2))$ with $\phi_{\vert \Omega} = 1$ and $\phi = 0$ on $\partial B_{3/2}((1/2, 1/2)).$ }
\label{fig:extension_domain}
 \end{figure}
In order to define the extension operator we will use the construction from \cite[proof of Lemma 1 in Section 2.9.1]{Tri:78}. 
First, let $(c_1^L, c_2^L, c_3^L) \in \R^3$ be the unique solution to the linear system
 \begin{equation} \label{linear-system}
 \sum\limits_{i=1}^3 c_i^L = 1, \ -\sum\limits_{i=1}^3 \frac{i}{3} c_i^L = 1,\quad \textup{ and } \quad \sum\limits_{i= 1}^3 \left( \frac{i}{3}\right)^2 c_i^L = 1.
 \end{equation}
 For $u \in \calW$ we define for $(x_1,x_2)\in (-1,1)\times(0,1)$
\begin{equation}
   F^L u (x_1,x_2)=\begin{cases}
   u(x_1,x_2)&\textup{ if }(x_1,x_2)\in \Omega,\\
    \sum\limits_{i=1}^3 c_i^L u\left( -\frac{i}{3} x_1, x_2\right) & \textup{ if } (x_1,x_2) \in (-1, 0]\times (0,1).
    \end{cases}
\end{equation} 
Then $F^L u \in W^{1,2}((-1,1) \times (0,1))$ and $\partial_{22} F^L u \in L^2((-1,1)\times(0,1))$. Using the existence of the trace of $\partial_1 u$ on $\{0\} \times (0,1)$ one can show in addition that $\partial_{11} F^Lu \in L^2((-1,1)\times (0,1))$. 
Moreover, there is a constant $C>0$ such that for all $u\in\calW$
\begin{equation}\label{eq:extensionTriebel}
\|F^L u\|_{W^{1,2}((-1,1) \times (0,1)} + \|\partial_{11} F^L u\|_{L^2((-1,1) \times (0,1))} + \|\partial_{22} F^L u\|_{L^2((-1,1) \times (0,1))} \leq C  
\|u\|_{\calW}.
\end{equation}
Analogously, we define a further extension $F^Ru$ to $(0,2)\times(0,1)$. 
We similarly extend the so-constructed function to $(-1,2)\times(-1,2)$ by an analogous construction in the vertical direction, see Figure \ref{fig:extension_domain}. 
In this way, we obtain for $u \in \calW$ an extension $Fu\in W^{1,2}((-1,2)^2)$ satisfying $\partial_{11} F u, \partial_{22} F u \in L^2((-1,2)^2))$ and for a universal $C>0$ 
\begin{equation}
\label{eq:contextF}
\|F u\|_{W^{1,2}((-1,2)^2)} + \|\partial_{11} F u\|_{L^2((-1,2)^2)} + \|\partial_{22} F_L u\|_{L^2((-1,2)^2)} \leq C  
\|u\|_{\calW}.
\end{equation} 
 Finally, let $\phi\in C^\infty_c(B_{3/2}((1/2,1/2)))$ be a smooth cut-off function with $\phi_{|\Omega}=1$ and extended by $0$ to $\R^2$. Then define  $E:\calW\to\calW_0(B_{3/2}(1/2,1/2))$ by $E(u):=F(u)\phi$.  
 By the product rule we obtain
 \begin{equation}\begin{split}
 &\|E(u)\|_{\calW_0(B_{3/2}(1/2,1/2))} \\
 \leq &C\left( \|F u\|_{W^{1,2}((-1,2)^2)} + \|\partial_{11} F u\|_{L^2((-1,2)^2)} + \|\partial_{22} F^L u\|_{L^2((-1,2)^2)} \right)\|\phi\|_{W^{2,\infty}(\calD)}\\
 \leq &C\|u\|_{\calW}\|\phi\|_{W^{2,\infty}(\calD)}.\end{split}
 \end{equation}
 \end{proof}
 
 \quad \textbf{Properties of the function $p$.} 
 In this section we collect some results on the function $p$ defined in \eqref{def:p}. 
 First, we discuss the behavior of $p$ for small angles. 
 \begin{lemma}\label{lem:p_convergence} The function $p$ is continuous in $(0,0)$, i.e., for every $\eps > 0$ there is $\eta >0$ such that for all $(\theta_1,\theta_2)$ with $\max\{\vert \theta_1\vert, \vert \theta_2\vert\} \leq \eta$ there holds $\vert 1-p(\theta_1, \theta_2) \vert \leq \eps $.  Moreover, it holds 
\[
p(\theta_1, \theta_2)\geq \frac12 \text{ for all } (\theta_1, \theta_2)\in [-\pi/8, \pi/8]^2 \quad \text{ and } \quad  p(\theta_1, \theta_2)\leq 3/2 \text{ for all }(\theta_1,\theta_2)\in [-\pi/2, \pi/2]^2.
\] Eventually, there is a constant $c_p\in \R$ such that 
\[ p(\theta_1, \theta_2) \geq c_p \quad \text{ for all } (\theta_1, \theta_2) \in [-\pi, \pi]^2.\]
\end{lemma}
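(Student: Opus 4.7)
The plan is to rewrite $p$ in a form that has no spurious removable singularities, from which all four assertions will follow by essentially elementary estimates. Using the sum-to-product formula $\sin(\theta_2/2)-\sin(\theta_1/2)=2\cos\bigl(\tfrac{\theta_1+\theta_2}{4}\bigr)\sin\bigl(\tfrac{\theta_2-\theta_1}{4}\bigr)$, the denominator of $p$ becomes $8\cos^2\bigl(\tfrac{\theta_1+\theta_2}{4}\bigr)\sin^2\bigl(\tfrac{\theta_2-\theta_1}{4}\bigr)$. For the numerator $q$, the identity $\sin^2(\theta_1)+\sin^2(\theta_2)=1-\cos(\theta_1+\theta_2)\cos(\theta_1-\theta_2)$ combined with $1-\cos(\theta_1-\theta_2)=2\sin^2\bigl(\tfrac{\theta_1-\theta_2}{2}\bigr)$ and the double-angle formula for $\sin\bigl(\tfrac{\theta_2-\theta_1}{2}\bigr)$ gives $q(\theta_1,\theta_2)=8\cos(\theta_1+\theta_2)\sin^2\bigl(\tfrac{\theta_2-\theta_1}{4}\bigr)\cos^2\bigl(\tfrac{\theta_2-\theta_1}{4}\bigr)$. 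Cancelling the factor $\sin^2\bigl(\tfrac{\theta_2-\theta_1}{4}\bigr)$, which is nonzero whenever $\theta_1\neq\theta_2$ and $\theta_1,\theta_2\in[-\pi,\pi]$, produces the clean closed form
\[
p(\theta_1,\theta_2)=\frac{\cos(\theta_1+\theta_2)\,\cos^2\!\bigl(\tfrac{\theta_2-\theta_1}{4}\bigr)}{\cos^2\!\bigl(\tfrac{\theta_1+\theta_2}{4}\bigr)} \qquad\text{for }\theta_1\neq\theta_2.
\]

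From this representation the first three assertions are immediate. Continuity at $(0,0)$ follows since both factors in the numerator and the denominator are continuous and the expression equals $1$ at the origin, which also yields the quantitative $\varepsilon$-$\eta$ statement. On $[-\pi/8,\pi/8]^2$ I bound the numerator from below by $\cos(\pi/4)\cos^2(\pi/16)=\tfrac{\sqrt{2}}{2}\cos^2(\pi/16)$, which an elementary computation shows to exceed $1/2$, while the denominator is at most $1$. On $[-\pi/2,\pi/2]^2$ I split according to the sign of $\cos(\theta_1+\theta_2)$: if $|\theta_1+\theta_2|\leq\pi/2$, then $\cos(\theta_1+\theta_2)\in[0,1]$ and $\cos^2\!\bigl(\tfrac{\theta_1+\theta_2}{4}\bigr)\geq\cos^2(\pi/8)=(2+\sqrt{2})/4$, giving $p\leq 4/(2+\sqrt{2})=4-2\sqrt{2}<3/2$; if instead $|\theta_1+\theta_2|\in(\pi/2,\pi]$, then $\cos(\theta_1+\theta_2)\leq 0$, so $p\leq 0\leq 3/2$.

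For the universal lower bound on $[-\pi,\pi]^2$ I again split by the sign of $\cos(\theta_1+\theta_2)$. On the set $\{\cos(\theta_1+\theta_2)\geq 0\}$, all factors in the closed form are nonnegative, so $p\geq 0$; this region contains neighbourhoods of the corners $(\pm\pi,\pm\pi)$ where $\cos^2\!\bigl(\tfrac{\theta_1+\theta_2}{4}\bigr)\to 0$, but there $p\to+\infty$, which is harmless for a lower bound. On the complementary set $\{\cos(\theta_1+\theta_2)<0\}$ we have $|\theta_1+\theta_2|\in(\pi/2,3\pi/2]$, hence $|\tfrac{\theta_1+\theta_2}{4}|\in(\pi/8,3\pi/8]$ and so $\cos^2\!\bigl(\tfrac{\theta_1+\theta_2}{4}\bigr)\geq\cos^2(3\pi/8)=(2-\sqrt{2})/4>0$ is bounded away from zero. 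Consequently $p\geq -1/\cos^2(3\pi/8)=-(4+2\sqrt{2})$ on this region. Combined with the defining value $p=1$ on the diagonal, this provides the required constant $c_p$.

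The main obstacle is precisely the analysis of the fourth claim near the corners $(\pm\pi,\pm\pi)$, where the denominator $\cos^2\!\bigl(\tfrac{\theta_1+\theta_2}{4}\bigr)$ of the closed form degenerates: the proof hinges on the observation that at these corners $\cos(\theta_1+\theta_2)=\cos(\pm 2\pi)=1>0$, so the singularity only pushes $p$ toward $+\infty$ and cannot generate a $-\infty$ limit that would obstruct a uniform lower bound.
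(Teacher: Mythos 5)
Your proof is correct and uses the same key reformulation as the paper, namely $p(\theta_1,\theta_2)=\cos(\theta_1+\theta_2)\cos^2\!\bigl(\tfrac{\theta_2-\theta_1}{4}\bigr)/\cos^2\!\bigl(\tfrac{\theta_1+\theta_2}{4}\bigr)$, from which continuity, the two-sided bounds on the small squares, and the global lower bound all follow. The only cosmetic difference is in the final claim: the paper argues via continuity on $[-\pi,\pi]^2\setminus\{(\pi,\pi),(-\pi,-\pi)\}$ together with $\liminf g>0$ at those two corners, whereas you split by the sign of $\cos(\theta_1+\theta_2)$ and extract the explicit constant $c_p=-(4+2\sqrt{2})$; both observations rest on the same fact that the denominator vanishes only at the corners, where the numerator is positive.
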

\begin{proof}
The proof can be found in \cite[Lemma 4.2]{B11-2019-CFO}. 
We sketch it here for the reader's convenience.
Applying several trigonometric identities, one obtains  
\begin{equation}\label{gamma:p:reformulated}
    p(\theta_1, \theta_2) = \begin{cases}
    \frac{\cos^2((\theta_2 -\theta_1)/4) \cos(\theta_1 + \theta_2)}{\cos^2((\theta_1 +\theta_2)/4)} \quad & \textup{ if } \theta_1 \neq \theta_2,\\
    1 & \textup{ else.}
    \end{cases}
\end{equation}
Since $x\mapsto \cos(x)$ and $x\mapsto 1/\cos(x)$ are continuous at $x = 0$, we conclude the continuity of $p$ at $(0,0)$. 
Moreover, we find for all $(\theta_1, \theta_2) \in [-\pi/8, \pi/8]^2$ with $\theta_1 \neq \theta_2$ the estimate 
\[p(\theta_1, \theta_2) \geq \cos^2((\theta_1-\theta_2)/4) \cos(\theta_1 +\theta_2)\geq \cos^2(\pi/16) \cos(\pi/4) \geq \frac12.  \]
Similarly, one estimates for $(\theta_1, \theta_2)\in [-\pi/2, \pi/2]^2$
\[  p(\theta_1,\theta_2) \leq \frac{1}{\cos(\pi/4)}\leq \frac32. \] 
Eventually, we prove the claimed lower bound on $[-\pi,\pi]^2$. Define the function \[g: [-\pi,\pi]^2 \setminus \{(\pi,\pi),(-\pi,-\pi)\} \to \R \text{ as } g(\theta_1,\theta_2) = \frac{\cos^2((\theta_2 -\theta_1)/4) \cos(\theta_1 + \theta_2)}{\cos^2((\theta_1 +\theta_2)/4)}.\] 
As $g$ is continuous in $[-\pi,\pi]^2 \setminus \{(\pi,\pi),(-\pi,-\pi)\}$ and $\liminf_{(\theta_1,\theta_2) \to (\pm \pi,\pm \pi)} g(\theta_1,\theta_2) > 0$ in light of \eqref{gamma:p:reformulated} the lower bound for $p$ follows.
\end{proof}
Next, we estimate the second contribution in the energies $\Ehor$ and $\Ever$ (see \eqref{gamma:spins:energie_one} and \eqref{spin:gamma:reformulation_vertical}).
\begin{lemma} \label{lem:lower_bound_q} Let $\eps \in (0,1/2)$, $\Delta \in (0,1)^2$, $u \in \Sbound$ and $w^{\Delta}, z^{\Delta}$ the associated chirality order parameters. In addition, assume that \[\max\{\arccos(1-\dhor), \arccos(1-\dver)\} \leq \pi/16.\] 
Then it holds
\begin{equation}
\dhor\eps^4 \sum\limits_{(i,j) \in \INNhor} p(\hor_{i+1,j}, \hor_{i,j})\vert \partial_1^{d,\eps}w^{\Delta}_{i,j}\vert^2 \geq \frac12 \dhor \eps^4 \sum\limits_{(i,j) \in \INNhor} \left\vert \partial_1^{d,\eps} w^{\Delta}_{i,j} \right\vert^2 - 2\pi^2(1-2c_p) \eps^2 \mathcal{H}^0(A^{hor})\nonumber 
\end{equation}
where $A^{hor} \coloneqq \{(i,j) \in \mathcal{I} \colon \max\{\vert \hor_{i+1,j}\vert, \vert \hor_{i,j}\vert  \}\geq \pi/8\} $ and $c_p\in \R$ is the constant from the lower bound of $p$ in Lemma \ref{lem:p_convergence}. The analogous statement holds for the vertical contribution, where one has to replace the set $A^{hor}$ by the corresponding set for the vertical angular velocities.
\end{lemma}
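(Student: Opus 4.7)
My plan is to exploit the two complementary lower bounds on $p$ given by Lemma~\ref{lem:p_convergence}, namely $p \geq 1/2$ on $[-\pi/8,\pi/8]^2$ and the global bound $p \geq c_p$ on $[-\pi,\pi]^2$. I would split the sum over $\INNhor$ into indices in $A^{hor}$, where some angular velocity is large, and its complement, where by construction both $\hor_{i,j}$ and $\hor_{i+1,j}$ lie in $[-\pi/8,\pi/8]$. On the complement Lemma~\ref{lem:p_convergence} yields $p(\hor_{i+1,j},\hor_{i,j}) \geq 1/2$ directly; on $A^{hor}$ only the uniform lower bound $p \geq c_p$ is available.

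Combining these two estimates and rewriting the sum over the complement of $A^{hor}$ as the full sum minus the sum over $A^{hor}$ gives
\[
\sum_{(i,j)\in \INNhor} p(\hor_{i+1,j},\hor_{i,j})\,|\derhor w^{\Delta}_{i,j}|^2 \geq \tfrac{1}{2}\sum_{(i,j)\in \INNhor} |\derhor w^{\Delta}_{i,j}|^2 - \bigl(\tfrac{1}{2}-c_p\bigr)\sum_{(i,j)\in A^{hor}} |\derhor w^{\Delta}_{i,j}|^2.
\]
Multiplying by $\dhor\eps^4$ produces the first summand on the right-hand side of the desired inequality, and it only remains to bound the second summand by $2\pi^2(1-2c_p)\eps^2 \mathcal{H}^0(A^{hor})$ term by term.

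For this termwise bound I would use the explicit form $w^{\Delta}_{i,j} = \sqrt{2/\dhor}\,\sin(\hor_{i,j}/2)$ together with $\hor_{i,j},\hor_{i+1,j}\in[-\pi,\pi)$. Applying the elementary inequality $|\sin a - \sin b|\leq|a-b|$ with $a=\hor_{i+1,j}/2$, $b=\hor_{i,j}/2$, and the range bound $|\hor_{i+1,j}-\hor_{i,j}|\leq 2\pi$, gives
\[
|w^{\Delta}_{i+1,j} - w^{\Delta}_{i,j}|^2 \leq \frac{2}{\dhor}\cdot \frac{(\hor_{i+1,j}-\hor_{i,j})^2}{4} \leq \frac{2\pi^2}{\dhor},
\]
so that $\dhor \eps^4 |\derhor w^{\Delta}_{i,j}|^2 \leq 2\pi^2 \eps^2$ for every $(i,j)\in \INNhor$. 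Summing over $A^{hor}$, multiplying by $\bigl(\tfrac{1}{2}-c_p\bigr) = (1-2c_p)/2$, and absorbing the factor $1/2$ into the crude bound above (or equivalently invoking the looser trivial bound $|w^\Delta_{i+1,j}-w^\Delta_{i,j}|\leq 2\sqrt{2/\dhor}$ if needed) yields the claimed constant $2\pi^2(1-2c_p)$.

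I do not anticipate any substantive obstacle; the argument is essentially a partition of the index set together with the two lower bounds already established in Lemma~\ref{lem:p_convergence}. The main point of care is that the threshold $\pi/8$ in the definition of $A^{hor}$ is chosen precisely so that the complementary indices fall in the region $[-\pi/8,\pi/8]^2$ where $p \geq 1/2$ applies. The vertical analogue is proved by exactly the same argument, replacing $\hor$, $w^\Delta$, $\derhor$, and $\INNhor$ by $\ver$, $z^\Delta$, $\derver$, and $\INNver$, respectively.
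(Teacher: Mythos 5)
Your proof is correct and takes essentially the same approach as the paper: both split $\INNhor$ into $A^{hor}$ and its complement, apply the two lower bounds on $p$ from Lemma~\ref{lem:p_convergence} ($p\geq 1/2$ off $A^{hor}$, $p\geq c_p$ on $A^{hor}$), and bound $\dhor\eps^4|\derhor w^{\Delta}_{i,j}|^2\leq 2\pi^2\eps^2$ pointwise via the Lipschitz estimate on $\sin$. As you observe, this actually yields the sharper constant $\pi^2(1-2c_p)$; since $c_p<1/2$, this trivially implies the stated $2\pi^2(1-2c_p)$, so no loosening is needed.
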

\begin{proof}
We follow the lines of \cite[Lemma 4.3]{B11-2019-CFO} and only present a finer analysis regarding the dependence on the parameters. Due to Lemma \ref{lem:p_convergence}, we have  \begin{equation}\label{proof:lower_bound_p:choice_alpha}
    p(\theta_1, \theta_2 ) \geq \frac12 \textup{ for all } \max\{ \vert \theta_1\vert, \vert \theta_2\vert \} \leq \frac\pi8 \quad \textup{ and } \quad p(\theta_1, \theta_2) \geq c_p \textup{ for all } (\theta_1, \theta_2) \in [-\pi, \pi]^2.
\end{equation}
For simplicity, we write $w$ for $w^{\Delta}$ and estimate
\begin{equation}
\begin{split}
& \dhor \eps^4 \sum\limits_{(i,j) \in \INNhor} \left\vert \partial_1^{d,\eps} w_{i,j} \right\vert^2 = \dhor \eps^4 \sum\limits_{(i,j) \in \INNhor\setminus A^{hor}} \left\vert \partial_1^{d,\eps} w_{i,j}\right\vert^2 + \dhor \eps^4 \sum\limits_{(i,j) \in A^{hor}} \left\vert \partial_1^{d,\eps} w_{i,j} \right\vert^2  \\
&\leq 2 \dhor \eps^4 \sum\limits_{(i,j) \in \INNhor\setminus A^{hor}} p(\hor_{i+1,j}, \hor_{i,j})\vert \partial_1^{d,\eps}w_{i,j}\vert^2+   \dhor \eps^4 \sum\limits_{(i,j) \in A^{hor}} (1- 2p(\hor_{i+1,j},\hor_{i,j})) \left\vert \partial_1^{d,\eps} w_{i,j} \right\vert^2 \\
&\phantom{\leq} + 2\dhor \eps^4 \sum\limits_{(i,j) \in A^{hor}} p(\hor_{i+1,j},\hor_{i,j}) \left\vert \partial_1^{d,\eps} w_{i,j} \right\vert^2 \\
& \leq 2 \dhor \eps^4 \sum\limits_{(i,j) \in \INNhor} p(\hor_{i+1,j},\hor_{i,j}) \left\vert \partial_1^{d,\eps} w_{i,j} \right\vert^2 + \dhor \eps^4 \sum\limits_{(i,j) \in A^{hor}} (1- 2c_p) \left\vert \partial_1^{d,\eps} w_{i,j} \right\vert^2\\
& \leq 2 \dhor \eps^4 \sum\limits_{(i,j) \in \INNhor} p(\hor_{i+1,j},\hor_{i,j}) \left\vert \partial_1^{d,\eps} w_{i,j} \right\vert^2 + 2\pi^2(1-2c_p) \eps^2 \mathcal{H}^0(A^{hor}),
\end{split}\label{gamma:p:surface_estimate}
\end{equation}
where we used \eqref{proof:lower_bound_p:choice_alpha} and that for $(i,j) \in \INNhor$
\[\vert \partial_1^{d, \eps} w_{i,j} \vert = \sqrt{\frac{2}{\dhor}}  \left\vert  \frac{\sin(\hor_{i+1, j}/2) - \sin(\hor_{i,j}/2)}{\eps}\right\vert\leq \sqrt{\frac{2}{\dhor}} \frac{1}{2\eps} \left\vert  \hor_{i+1, j} - \hor_{i,j}\right\vert\leq \sqrt{\frac{2}{\dhor}}\frac\pi\eps.
\]  
Arguing analogously for the vertical energy contribution yields the claim.
\end{proof}

Eventually, we will bound the term $\mathcal{H}^0(A^{hor})$ as appearing in the previous Lemma. As outlined in \cite{B11-2019-CFO} (see also Section \ref{sec:spin_intr}), this will be helpful to control the number of vortices, see Remark \ref{rem: vortex} below.

\begin{lemma} \label{lem:vortices-estimates} Let $\eps \in(0, 1/2)$, $\Delta \in (0,1)^2$, $u\in \mathcal{S}_0(\Omega,\eps)$ and $(\hor, \ver)\colon \Omega \cap\eps\Z^2 \rightarrow [-\pi,\pi)^2$ the associated angular velocity field. 
Furthermore, let 
$\beta^{hor/ver} \in (2\arccos(1-\delta^{hor/ver}), \pi)$,
\begin{equation*}
A_{\beta^{hor}}^{hor} \coloneqq \left\{ (i,j) \in \INhor \colon \vert\hor_{i,j}\vert \geq \beta^{hor} \right\},\quad \textup{ and } \quad A_{\beta^{ver}}^{ver} \coloneqq \left\{ (i,j) \in \INver \colon \vert\ver_{i,j}\vert  \geq \beta^{ver} \right\}.
\end{equation*}
Then it holds 
\begin{align}
    \label{ineq:number-of-lattice-points}
    &\eps^2 \mathcal{H}^0(A_{\beta^{hor}}^{hor})  \leq \frac{2\Ehor(u)}{(1-\dhor)^2 (1-\dhor - \cos(\beta^{hor}))^2} \\ \text{ and } \qquad &\eps^2 \mathcal{H}^0(A_{\beta^{ver}}^{ver})  \leq \frac{2\Ever(u)}{(1-\dver)^2 (1-\dver - \cos(\beta^{ver}))^2}.
\end{align}
\end{lemma}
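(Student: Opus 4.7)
The plan is to establish a pointwise lower bound on each summand of $\Ehor$ that depends only on $\hor_{i,j}$ (with an analogous bound for $\hor_{i+1,j}$), and then sum over those indices lying in $A_{\beta^{hor}}^{hor}$.

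First I would expand $u_{i,j}$ and $u_{i+2,j}$ in the orthonormal basis $\{u_{i+1,j}, u_{i+1,j}^\perp\}$: using $u_{i,j} = \cos\hor_{i,j}\, u_{i+1,j} - \sin\hor_{i,j}\, u_{i+1,j}^\perp$ and $u_{i+2,j} = \cos\hor_{i+1,j}\, u_{i+1,j} + \sin\hor_{i+1,j}\, u_{i+1,j}^\perp$, a direct computation gives
\[
|u_{i,j} - 2(1-\dhor) u_{i+1,j} + u_{i+2,j}|^2 = \bigl(\cos\hor_{i,j} + \cos\hor_{i+1,j} - 2(1-\dhor)\bigr)^2 + \bigl(\sin\hor_{i+1,j} - \sin\hor_{i,j}\bigr)^2.
\]
I then recognize the right-hand side as the squared Euclidean distance $|P - Q|^2$ between the fixed point $P := (2(1-\dhor) - \cos\hor_{i,j},\, \sin\hor_{i,j})$ and the point $Q := (\cos\hor_{i+1,j},\, \sin\hor_{i+1,j}) \in \mathbb{S}^1$. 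Since $|P-Q| \geq \bigl||P| - 1\bigr|$, this yields $|u_{i,j} - 2(1-\dhor) u_{i+1,j} + u_{i+2,j}|^2 \geq (\sqrt{D} - 1)^2$ with $D := |P|^2 = 1 + 4(1-\dhor)(1-\dhor - \cos\hor_{i,j})$.

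Next I would convert this into the form appearing in the claim using $(\sqrt{D}-1)^2 = (D-1)^2/(\sqrt{D}+1)^2$, together with $(D-1)^2 = 16(1-\dhor)^2(1-\dhor - \cos\hor_{i,j})^2$ and the estimate $D \leq 1 + 4(1-\dhor)(2-\dhor) = (3-2\dhor)^2$, which implies $(\sqrt{D}+1)^2 \leq 16$. Combining these yields the pointwise bound
\[
|u_{i,j} - 2(1-\dhor) u_{i+1,j} + u_{i+2,j}|^2 \geq (1-\dhor)^2 (1-\dhor - \cos\hor_{i,j})^2,
\]
and by the symmetry of the summand under the exchange $u_{i,j} \leftrightarrow u_{i+2,j}$ the analogous bound with $\hor_{i+1,j}$ in place of $\hor_{i,j}$ holds as well.

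Finally, under the hypothesis $\beta^{hor} > \arccos(1-\dhor)$ (implied by $\beta^{hor} > 2\arccos(1-\dhor)$), for $(i,j)\in\INNhor$ with $|\hor_{i,j}|\geq\beta^{hor}$ one has $1-\dhor - \cos\hor_{i,j} \geq 1-\dhor - \cos\beta^{hor} > 0$; summing the pointwise bound over such indices gives
\[
2\Ehor(u) \geq \eps^2 (1-\dhor)^2 (1-\dhor - \cos\beta^{hor})^2 \,\mathcal{H}^0(\INNhor\cap A_{\beta^{hor}}^{hor}),
\]
and the symmetric bound on $\hor_{i+1,j}$ absorbs the boundary indices in $A_{\beta^{hor}}^{hor}\setminus\INNhor$ (those in the last column) by invoking the summand at $(k-1,j)\in\INNhor$, yielding the claim; the vertical estimate is identical. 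The main obstacle is the pointwise inequality in the first two paragraphs: the geometric recognition of the summand as a squared point-to-circle distance is what decouples the estimate from $\hor_{i+1,j}$, and everything afterwards is essentially bookkeeping.
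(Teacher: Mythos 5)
Your argument is correct and is the paper's proof in geometric disguise: the bound $\vert P - Q\vert \geq \vert\,\vert P\vert - 1\,\vert$ is exactly the triangle inequality $\vert u_{i,j} - 2(1-\dhor)u_{i+1,j} + u_{i+2,j}\vert \geq \vert\,\vert u_{i,j}-2(1-\dhor)u_{i+1,j}\vert - 1\,\vert$ used in the paper (indeed $\vert P\vert = \vert u_{i,j}-2(1-\dhor)u_{i+1,j}\vert$), and your denominator bound $(\sqrt{D}+1)^2 \leq 16$ coincides with the paper's $(2+2(1-\dhor))^2 \leq 16$. The remaining steps — the identity $(\sqrt{D}-1)^2 = (D-1)^2/(\sqrt{D}+1)^2$, the symmetry under $u_{i,j}\leftrightarrow u_{i+2,j}$, and the summation over $A_{\beta^{hor}}^{hor}$ — mirror the paper exactly.
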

\begin{proof} We only show the first estimate of the assertion, the second one follows analogously. We follow the lines of the proof of  \cite[Lemma 4.1]{B11-2019-CFO}. 
Note that for any unit vectors $u, v, w \in \mathbb{S}^1$ it holds 
\[\left\vert \left\vert u -2(1-\dhor)v\right\vert +1\right\vert \leq 2 + 2(1-\dhor). \] 
Hence, by the triangle inequality
\begin{equation}
     \vert u - 2(1-\dhor)v + w\vert^2 \geq (\vert u - 2(1-\dhor)v\vert -1)^2 \geq \frac{1}{(2 +2(1-\dhor))^2} \left(\vert u- 2(1-\dhor)v\vert^2 -1\right)^2.
 \end{equation}
Next, let $\theta_{u,v}, \theta_{v,w} \in [-\pi, \pi)$ be such that $\cos(\theta_{u,v}) = u\cdot v$ and $\cos(\theta_{v,w}) = v\cdot w$.
Then observe that
 \begin{equation}\begin{split}
    & \frac{1}{(2 +2(1-\dhor))^2} \left(\vert u- 2(1-\dhor)v\vert^2 -1\right)^2\nonumber\\
    \geq &\frac{1}{16} ( 1 -4(1-\dhor)\cos(\theta_{u,v}) + 4(1-\dhor)^2 -1)^2 = (1-\dhor)^2 (1-\dhor- \cos(\theta_{u,v}))^2.\end{split}
\end{equation}
Swapping the roles of $u$ and $v$ we find
\begin{equation*}
    \vert u - 2(1- \dhor)v + w \vert^2 \geq (1-\dhor)^2\max \left\{ (1-\dhor- \cos(\theta_{u,v}))^2, (1-\dhor- \cos(\theta_{v,w}))^2\right\}. 
\end{equation*}
Eventually, we conclude 
\begin{equation*}\begin{split}
\Ehor(u) = &\frac{\eps^2}{2} \sum\limits_{(i,j)\in \INNhor} \vert u_{i,j} -2(1-\dhor)u_{i+1,j} + u_{i+2,j}\vert^2 \\
  \geq &\frac{(1-\dhor)^2}{2} \eps^2\sum\limits_{(i,j)\in A_{\beta^{hor}}^{hor}}  (1-\dhor -\cos(\beta^{hor}))^2 \label{eq: estimate energy reformulation}\\
=& \frac{(1-\dhor)^2}{2} (1-\dhor - \cos(\beta^{hor}))^2\eps^2 \mathcal{H}^0(A_{\beta^{hor}}^{hor}).\end{split} 
\end{equation*} 
\end{proof}
\begin{remark}\label{rem: vortex}
For $\delta = \dhor = \dver \in(0, 1/2)$ and $\beta^{hor} = \beta^{ver} = \pi/2$ we obtain by Lemma \ref{lem:vortices-estimates} for the set of vortices (recall \eqref{def:vortices})
\[ \eps^2 \mathcal{H}^0(V) \leq \eps^2\left( \mathcal{H}^0(A_{\frac\pi2}^{hor})+ \mathcal{H}^0(A_{\frac\pi2}^{ver})\right)  \leq 64 \Eiso(u,\Omega)\]
for any spin field $u\colon \Omega \cap\eps\Z^2 \rightarrow \mathbb{S}^1.$\label{rem:bound_vortices}
\end{remark}
\begin{remark}\label{rem:beta_special}
    Lemma \ref{lem:vortices-estimates} remains true for $\beta^{hor} = 2\pi/3$ or $\beta^{ver} = 2\pi /3$. In this case, we have 
    \[ \eps^2 \mathcal{H}^0(A_{\frac{2\pi}{3}}^{hor}) \leq \frac{8\Ehor(u)}{(1-\dhor)^2} \text{\qquad or \qquad} \eps^2 \mathcal{H}^0(A_{\frac{2\pi}{3}}^{ver}) \leq \frac{8\Ever(u)}{(1-\dver)^2}.\]
\end{remark}

\section{Proof of the $\Gamma$-convergence result}
\label{sec:Gamma}

In this chapter, we prove the $\Gamma$-convergence result at the heli-/ferromagnetic transition point (see Theorem \ref{thm:gamma}).
As outlined above, a main difficulty comes from the discrete curl-condition, and the need to control the number of vortices. 
We start by proving the local compactness and liminf inequality in Section \ref{sec:liminf}, and construct a recovery sequence in Section \ref{sec:upperbound} 

\subsection{Local compactness and lower bound}\label{sec:liminf}
In this section, we prove the local compactness and the liminf inequality stated in (i) and (ii) of Theorem \ref{thm:gamma}. Some steps are inspired by \cite{B11-2019-CFO} where a different parameter regime is considered. We start by proving that sequences $(w_n,z_n)$ that are equibounded in energy can be associated to a sequence of gradients of equibounded $W^{2,2}$-functions.

\begin{lemma}[Existence of bounded potentials]\label{lem:existenc_potentials}
 Let $\eps_n \in (0, 1/2), \ \Delta_n = (\delta_n^{hor}, \delta_n^{ver})\in (0,1)^2$ satisfy 
\begin{equation}
\quad \eps_n \rightarrow 0, \quad  \Delta_n  \rightarrow (0,0), \quad \frac{\eps_n}{\sqrt{2\dver_n}} \rightarrow \sigma \in (0,\infty), \quad \text{ and }\quad\gamma_n^2 \coloneqq  \frac{\dhor_n}{\dver_n} \rightarrow \gamma^2 \in [0,\infty). 
\end{equation}
Let $(w_n, z_n) \in L^2(\Omega;\R^2)$ for $n\in \N.$ If there is $K> 0$ such that 
\begin{equation}
    \Hn(w_n, z_n) \leq K \quad \text{ for all } n\in \N,
\end{equation}
then there is a sequence of associated angular velocity fields $(\hor_n,\ver_n) \colon \Omega \cap \eps_n\Z^2 \rightarrow [-\pi, \pi)^2$ and a subsequence such that the following two statements are true:
\begin{enumerate}
    \item[\textup{(i)}] There is a sequence $u_n\in W^{2,2}(\Omega)$ with $u_n(0,\cdot) \equiv 0$ such that for all $n\in\N$, 
\begin{equation}
    \begin{split}
        &\quad \quad \quad \quad \quad \nabla u_n =  ((2\delta_n^{ver})^{-1/2}\pa(\theta^{ hor}_n),(2\delta_n^{ver})^{-1/2}\pa(\theta^{ ver}_n)),\\
         &\partial_{11} u_n = (2\delta_n^{ver})^{-1/2} \partial_1^{d,\eps_n} \hor_n\  \text{ and }\  \partial_{22} u_n = (2\delta_n^{ver})^{-1/2}\partial_2^{d,\eps_n} \ver_n \quad \text{ on }\Omega \cap\eps_n\Z^2.
    \end{split} 
\end{equation}
\item[\textup{(ii)}] There is $u\in W^{2,2}(\Omega)$ such that $u_n \rightarrow u$ strongly in $W^{1,2}(\Omega)$ and $u_n \rightharpoonup u$ weakly in $W^{2,2}(\Omega)$ as $n\rightarrow \infty.$
\end{enumerate}
\end{lemma}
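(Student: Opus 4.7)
My plan is to extract from $\Hn(w_n,z_n)\le K$ discrete $\ell^2$-estimates on the discrete derivatives of the angular velocity fields, modify these on a negligible set so that the discrete curl vanishes, integrate them to produce the scalar potential $u_n$, and finally conclude via the embedding Theorem~\ref{th:embedding} together with Rellich--Kondrachov. For the estimates, I first rewrite $\Hn$ using \eqref{gamma:spins:energie_one}--\eqref{spin:gamma:reformulation_vertical} and the scaling $\eps_n/\sqrt{2\dver_n}\to\sigma$; the bound on $\Hn$ then controls both the double-well terms $\|w_n^2-\gamma_n^2\|_{L^2}$ and $\|z_n^2-1\|_{L^2}$ (here I identify $w_n$ with the normalised parameter $\gamma_n w^{\Delta_n}$) and the gradient terms $\eps_n^2\sum p(\hor_{i,j},\hor_{i+1,j})|\derhorn w_n|^2$ (and the vertical analogue). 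Applying Lemma~\ref{lem:lower_bound_q} with $\beta=\pi/8$ replaces $p$ by $1/2$ at the cost of a bad-set error $\eps_n^2\mathcal H^0(A^{hor}_{\pi/8})$, which by Lemma~\ref{lem:vortices-estimates} is bounded by $C\,\Ehor_n(\dhor_n)^{-2}$; combining with $\Ehor_n\le K\sqrt 2\,\eps_n(\dver_n)^{3/2}$ and $\gamma_n\to\gamma$ shows this correction stays uniformly bounded. Passing from $\derhorn w_n$ to $\derhorn\hor_n$ via the mean value theorem applied to $\sin(\cdot/2)$ on the good set $\{|\hor_n|\le\pi/8\}$ (with the bad-set contribution absorbed as above) yields
\[
\eps_n^2\sum_{(i,j)\in\INNhor}\tfrac{1}{2\dver_n}|\derhorn\hor_n|^2\le C,\qquad \eps_n^2\sum_{(i,j)\in\INNver}\tfrac{1}{2\dver_n}|\dervern\ver_n|^2\le C.
\]

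Next I remove the vortices to build $u_n$. By Remark~\ref{rem: vortex} the vortex set of $(\hor_n,\ver_n)$ satisfies $\eps_n^2\mathcal H^0(V_n)\le 64\,\Eiso(u_n,\Omega)\to 0$, and in each vortex cell \eqref{size-angles-vortex} identifies one of the four involved angles of absolute value $\ge\pi/2$, which is therefore part of the bad set already treated in Step~1. I redefine that angle by $\mp 2\pi$ (allowing values outside $[-\pi,\pi)$) so that the discrete curl at the cell becomes $0$, organising the corrections carefully (for instance along row-cuts emanating from each vortex) to avoid cascading into adjacent cells. This produces a curl-free pair $(\tilde\hor_n,\tilde\ver_n)$ representing the same $(w_n,z_n)$. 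Because the discrete curl vanishes on every cell, one checks from the explicit formula \eqref{def:gamma:triangulation} that $\partial_2\pa(\tilde\hor_n)=\partial_1\pa(\tilde\ver_n)$ on every triangle $T^\pm_{\eps_n}(i,j)$, so $(2\dver_n)^{-1/2}(\pa(\tilde\hor_n),\pa(\tilde\ver_n))$ is an a.e.\ gradient. Integrating it with the normalisation $u_n(0,\cdot)\equiv 0$ (which is consistent with the Dirichlet data $u(0,j\eps)=(0,1)^T$, forcing $\tilde\ver_{0,j}=0$) defines $u_n$; the identities for $\nabla u_n$ on the triangles and for $\partial_{11}u_n, \partial_{22}u_n$ at lattice points then follow directly from \eqref{def:gamma:triangulation}.

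For the compactness statement I bound $u_n$ in $\mathcal W$ and invoke Theorem~\ref{th:embedding}. Step~1 already gives $\|\partial_{11}u_n\|_{L^2(\Omega)}+\|\partial_{22}u_n\|_{L^2(\Omega)}\le C$. Since $\|w_n^2-\gamma_n^2\|_{L^2}\le C$ implies $\|w_n\|_{L^2}\le C$ (and similarly for $z_n$), and since $|\hor_n|/\sqrt{2\dver_n}\le C|w_n|$ on the good set with negligible bad-set contribution, I also obtain $\|\partial_1 u_n\|_{L^2}+\|\partial_2 u_n\|_{L^2}\le C$. The boundary condition $u_n(0,\cdot)\equiv 0$ together with Poincar\'e's inequality yields $\|u_n\|_{L^2}\le C$, hence $\|u_n\|_{\mathcal W}\le C$. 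Theorem~\ref{th:embedding} then upgrades this to $\|u_n\|_{W^{2,2}(\Omega)}\le C$, and the Rellich--Kondrachov theorem extracts a subsequence converging weakly in $W^{2,2}(\Omega)$ and strongly in $W^{1,2}(\Omega)$ to some $u\in W^{2,2}(\Omega)$.

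The hard part will be making Step~2 quantitative. A single $\pm 2\pi$ correction at a vortex contributes of order $(2\pi/\eps_n)^2$ to one summand of $|\derhorn\hor_n|^2$, i.e.~of order $1/\dver_n$ per vortex to the left-hand side of the first discrete bound from Step~1. Controlling $\mathcal H^0(V_n)/\dver_n$ requires combining Remark~\ref{rem: vortex} with the scaling $\eps_n/\sqrt{\dver_n}\to\sqrt 2\,\sigma>0$: together $\eps_n^2\mathcal H^0(V_n)\le C\,\eps_n\,\dver_n^{3/2}$ and this ratio give $\mathcal H^0(V_n)/\dver_n\le C/\sigma$, which is exactly the tight bound that makes the construction work. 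The secondary technical issue is to verify that the vortex corrections can indeed be organised without uncontrolled cascading into neighbouring cells, and that all the linearisations used (replacing $\sin(\hor/2)$ by $\hor/2$ and $p$ by $1/2$) are uniformly valid outside the negligible bad set.
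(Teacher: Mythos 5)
Your overall architecture (extract discrete $\ell^2$-bounds from $\Hn\le K$, go from a curl-free angular velocity field to a scalar potential, bound it in $\mathcal W$, invoke Theorem~\ref{th:embedding} and Rellich--Kondrachov) coincides with the paper's. But your Step~2, the ``vortex removal by $\pm2\pi$ corrections,'' is both unnecessary and the source of a genuine gap that you yourself flag and do not resolve.

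The punchline you miss is already contained in your own estimate. You write $\eps_n^2\mathcal H^0(V_n)\lesssim K\eps_n(\dver_n)^{3/2}$ and observe $\mathcal H^0(V_n)/\dver_n\lesssim1/\sigma$. Since $\dver_n\to0$, this gives $\mathcal H^0(V_n)\lesssim\dver_n\to0$, and because $\mathcal H^0(V_n)$ is a nonnegative \emph{integer}, it equals $0$ for all large $n$. The paper applies exactly this argument --- not just to the vortex set $V_n$ but to the larger set $\mathcal A_n$ of cells containing any angle of magnitude $\ge\pi/8$ (see the inclusion $V_n\subseteq\mathcal A_n$ in \eqref{est:vortices}): Lemma~\ref{lem:vortices-estimates} shows $\eps_n^2\max\{\mathcal H^0(V_n),\mathcal H^0(\mathcal A_n)\}\lesssim K\eps_n(\dver_n)^{3/2}$, so $\mathcal H^0(\mathcal A_n)=0$ along a subsequence. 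Along that subsequence there are no large angles at all, hence no vortices, $(\hor_n,\ver_n)$ is already curl-free in the discrete sense, all the trigonometric linearisations ($p\approx1$, $\sin(\theta/2)\approx\theta/2$) hold at every lattice point, and the potential can be defined directly by the piecewise affine extension without any modification. Your ``vortex correction'' scheme never has to run.

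In contrast, the route you propose would have to be abandoned even if you resolved the cascading. First, adding $\pm2\pi$ to an angle produces a field that is no longer $[-\pi,\pi)^2$-valued, which contradicts the form of ``associated angular velocity field'' in the statement. Second, the lemma requires $\nabla u_n=(2\dver_n)^{-1/2}(\pa(\hor_n),\pa(\ver_n))$ for the actual angular velocities of the spin field $v_n$ representing $(w_n,z_n)$; if you integrate a modified field $(\tilde\hor_n,\tilde\ver_n)\ne(\hor_n,\ver_n)$, the identity in (i) fails on the triangles touching the modified sites. Third, the propagation issue you describe (``cascading into adjacent cells'') is a real obstruction: a $\pm2\pi$ shift at one lattice site changes the discrete curl of both cells sharing that edge, so the cut would have to run to $\partial\Omega$, and the accumulated error along the cut is not obviously controlled by your energy bound. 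None of this work is needed because the curl-free subsequence already exists.

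Beyond this, your estimates for $\|\partial_{ii}u_n\|_{L^2}$ and $\|\partial_iu_n\|_{L^2}$ are in the right spirit but should be carried out only after fixing the subsequence with $\mathcal H^0(\mathcal A_n)=0$; then the ``bad-set'' contributions are identically zero rather than ``negligible,'' and the inequalities $|\theta_2-\theta_1|\le4|\sin(\theta_2/2)-\sin(\theta_1/2)|$ and $|\theta|\le2|\sin\theta|$ apply without caveat, exactly as in the paper's Steps~4.1--4.2.
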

\begin{proof}
We divide the proof into several steps. First, by definition of the energy we may associate the functions $(w_n, z_n)$ to angular velocity fields $(\hor_n, \ver_n) \colon \Omega \cap \eps_n \Z^2 \rightarrow [-\pi, \pi)^2$ of appropriate spin fields. 
Next, we show that the number of vortices of the fields $(\hor_n,\ver_n)$ vanishes as $n\rightarrow \infty$. Denoting by $\pa(\theta_n^{hor})$ and $ \pa(\theta_n^{ ver})$ the piecewise affine extensions of the discrete functions $\hor_n$ and $\ver_n$, c.f.~\eqref{def:gamma:triangulation}, we show that there is a subsequence such that there is a sequence of potentials $u_n \in \mathcal{W}(\Omega)$ with $\nabla u_n = ((2\delta_n^{ver})^{-1/2}\pa(\hor_n), (2\delta_n^{ver})^{-1/2}\pa(\ver_n))$, $u_n(0,\cdot)\equiv 0$ for $n\in \N$, and being bounded in $\mathcal{W}(\Omega)$, see \eqref{gamma:laplace_sobolev}. Due to Theorem  \ref{th:embedding}, the boundedness of $(u_n)_n$ in $\mathcal{W}(\Omega)$ also implies the boundedness in $W^{2,2}(\Omega)$. Due to the compact embedding of $W^{2,2}(\Omega)$ into $W^{1,2}(\Omega)$, 
there is a subsequence converging weakly in $W^{2,2}(\Omega)$ and strongly in $W^{1,2}(\Omega).$  \\

\quad \emph{\textbf{Step 1. Set-up.}} Since $\Hn(w_n, z_n) \leq K <\infty $ for all $n \in \N$, by definition of $\Hn$ there exists a spin field $v_n \in \Sboundn$ that satisfies \eqref{gamma:spins:periodic_boundary} and $\pc (\T(v_n)) = ( w_n, z_n)$.
Furthermore, let $(\hor_n, \ver_n) \colon \Omega \cap \eps_n \Z^2 \rightarrow [-\pi, \pi)^2$ be the associated angular velocity field. 
In the following, we will identify the corresponding discrete chiralities $w^{\Delta_n}(v_n), z^{\Delta_n}(v_n): \Omega \cap \eps_n \Z^2 \to \R$ as defined in \eqref{def:chirality_order_parameters} with the functions $\gamma_n^{-1} w_n$ and $z_n$.
Moreover, we define the set of vortices 
\[
V_n = \left\{  (i,j) \in \INNhor\cap \INNver \colon \vert \eps \curl((\hor_n,\ver_n)_{i,j})\vert = 2\pi \right\}.
\]

\quad \emph{\textbf{Step 2. Existence of a curl-free subsequence of $\bm{(\hor_n, \ver_n)}$.}} First note that if \\ $(i,j)\in \INNhor\cap\INNver\cap V_n$ then $\max\{\vert \hor_{n,i,j}\vert, \vert \hor_{n, i,j+1}\vert, \vert \ver_{n, i,j} \vert , \vert \ver_{n, i+1,j}\vert  \}\geq \pi/2$. It follows that
\begin{equation} V_n\subseteq \left\{ (i,j) \in \INNhor\colon \max\{\vert \hor_{i,j}\vert, \vert \hor_{i,j+1}\vert\} \geq\frac\pi8\right\} \bigcup \left\{ (i,j)\in \INNver\colon \max\{ \vert \ver_{i,j} \vert , \vert \ver_{i+1,j}\vert \}\geq \frac\pi8\right\} =: \mathcal{A}_n, \label{est:vortices} \end{equation}
Hence, we may estimate
\[ \mathcal{H}^0(V_n) \leq \mathcal{H}^0\left(\left\{(i,j)\in \INNhor \colon \vert \hor_{n, i, j}\vert\geq \frac\pi8 \right\} \right) +  \mathcal{H}^0\left(\left\{(i,j)\in \INNver\colon \vert \ver_{n, i, j}\vert\geq \frac\pi8 \right\} \right). \] 
In turn, Lemma \ref{lem:vortices-estimates} and $\cos(\pi/8) \leq 19/20$ imply 
\begin{equation}
\begin{split}
   \max\{ \mathcal{H}^0(V_n), \mathcal{H}^0(\mathcal{A}_n)\}& \leq   \frac{(2\delta_n^{ver})^{3/2}}{\eps_n}\left(\frac{H^{hor}_n(w_n)}{(1-\delta_n^{hor})^2 (1-\delta_n^{hor}-\frac{19}{20})^2 } + \frac{H_n^{ver}(z_n)}{(1-\delta_n^{ver})^2 (1-\delta_n^{ver}-\frac{19}{20})^2 } \right)  \\
    &\leq  \frac{(2\delta_n^{ver})^{3/2}}{\eps_n}\left(\frac{K}{(1-\delta_n^{hor})^2 (1-\delta_n^{hor}-\frac{19}{20})^2 } + \frac{K}{(1-\delta_n^{ver})^2 (1-\delta_n^{ver}-\frac{19}{20})^2 } \right), \end{split}\label{gamma:lem_existence_pot:vortices}
\end{equation}
where $\Hnhor$ and $\Hnver$ are the renormalized energies defined in \eqref{eq: def Hn}.
The right-hand side of \eqref{gamma:lem_existence_pot:vortices} converges to zero as $n\rightarrow \infty$. Hence, there is a curl-free (not relabeled) subsequence $(\hor_n,\ver_n)$ that satisfies $\mathcal{H}^0(\mathcal{A}_n) = 0$ for all $n\in \N$. From now on, we will argue along this subsequence.\\

\quad \emph{\textbf{Step 3. Existence of potentials $\bm{u_n}$. }}
Let $\pa(\hor_n)$ and $\pa(\ver_n)$ be the piecewise affine extensions of the discrete functions $\hor_n$ and $\ver_n$ introduced in \eqref{def:gamma:triangulation}, see Figure \ref{fig:construction_triangular} for an illustration. 
\begin{figure}[h]
    \centering
    \includegraphics[scale = 0.4]{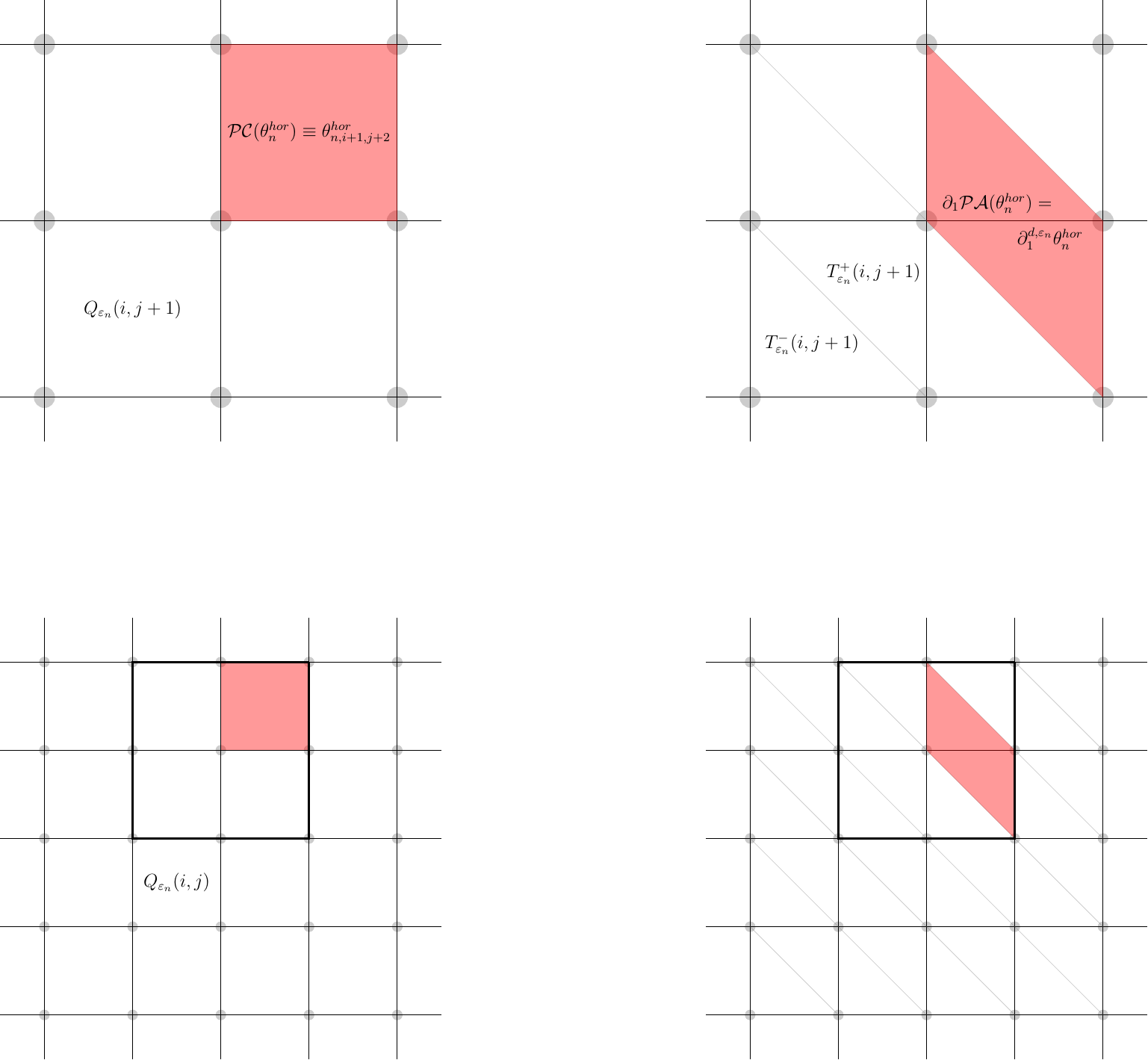}
    \caption{Left: Sketch of the quadratic $\eps_n$-lattice with the associated constant extension $(\pc(\hor_n), \pc(\hor_n))$ of $(\hor_n, \ver_n)$ to $\Omega$. Right: Sketch of the piecewise affine, continuous extension $(\pa(\hor_n), \pa(\ver_n))$ of $(\hor_n, \ver_n)$ to $\Omega$ with respect to the $\eps_n$-triangulation. }
\label{fig:construction_triangular}
\end{figure}
By the piecewise affine structure of $(\pa(\hor_n), \pa(\ver_n))$ it holds $(\pa(\hor_n), \pa(\ver_n)) \in W^{1,2}(\Omega;\R^2)$.
By step 2 we may assume that the sequence $(\hor_n,\ver_n)$ is curl-free (in a discrete sense). 
This carries over to $(\pa(\ver_n), \pa(\ver_n))$ due to the definition of the functions $\pa(\hor_n)$ and $\pa(\ver_n)$, and 
\[\partial_2 \pa(\hor_n) - \partial_1\pa(\ver_n)=\textup{curl}^{d,\eps_n} ((\hor_n,\ver_n)) = 0\quad \textup{ on } \Omega \cap \eps_n \Z^2 .\] 
Since $\Omega \subseteq \R^2$ is simply-connected, there exist scalar functions $u_n\in W^{2,2}(\Omega)$ with \[\nabla u_n = ((2\delta_n^{ver})^{-1/2}\pa(\hor_n), (2\delta_n^{ver})^{-1/2}\pa(\ver_n))\]  for all $n\in \N$. As $\ver_{n, 0,\cdot} \equiv 0$ the functions $u_n(0,\cdot)$ (in the sense of traces) are constant on $(0,1)$. Hence, we may assume that $u_n(0,\cdot) \equiv 0$.\\

\quad \emph{\textbf{Step 4. Boundedness of $\bm{u_n}$ in $\bm{\mathcal{W}(\Omega)}$.}}  We divide the proof of the boundedness into two separate steps. We show that there is $C >0$ such that it holds for all $n\in \N$ and $i=1,2$
\begin{equation}
    \label{gamma:boundedness_partial}
    \Vert \partial_{ii} u_n\Vert_{L^2(\Omega)} \leq  C <\infty \quad \textup{ and } \quad \Vert \partial_i u_n \Vert_{L^2(\Omega)} \leq C <\infty.
\end{equation}
 We only prove the above estimates \eqref{gamma:boundedness_partial} for $i = 1$. The estimates for $i=2$ follow similarly.  \\

\quad \emph{\textbf{Step 4.1: Boundedness of $\bm{(\partial_{11}u_n)_n}$ in $\bm{L^2(\Omega)}$.}}
We observe that for all $\theta_1,\theta_2 \in [-\pi/3,\pi/3]$ there holds
\[ 
\vert \theta_2 -\theta_1 \vert = 2 \left\vert \int_{\sin(\theta_1/2)}^{\sin(\theta_2/2)}\frac{1}{\sqrt{1-z^2}}\textup{ d}z \right\vert \leq 4\left\vert \sin\left( \frac{\theta_2}{2}\right) - \sin \left( \frac{\theta_1}{2}\right)\right\vert.  
\] 
By Step 2 we have $\mathcal{H}^0(\mathcal{A}_n) = 0$ for all $n\in \N$, where the sets $\mathcal{A}_n$ are defined in \eqref{est:vortices}. Applying the reformulation of the energy \eqref{gamma:spins:energie_one} and Lemma \ref{lem:lower_bound_q} then yields  
\begin{equation}
\begin{split}
& \Vert \partial_{11} u_n\Vert_{L^2(\Omega)}^2 \leq (2\delta_n^{ver})^{-1} \eps^2_n \sum_{(i,j)\in \INNhor} \left\vert \partial_{1}^{d,\eps} \hor_{n,i,j}\right\vert^2  = \frac{(2\delta_n^{ver})^{-1}}{\eps_n^2} \eps_n^2 \sum_{(i,j)\in \INNhor} \left\vert \hor_{n,i+1, j} -\hor_{n, i, j}\right\vert^2 \\
& \leq 16\frac{(2\delta_n^{ver})^{-1}}{\eps_n^2} \eps_n^2 \sum_{(i,j)\in \INNhor} \left\vert \sin\left(\frac{\hor_{n,i+1, j}}{2}\right) -\sin\left(\frac{\hor_{n, i, j}}{2}\right)\right\vert^2 = 4 \frac{\delta_n^{hor}}{\delta_n^{ver}}\eps_n^2 \sum_{(i,j)\in \INNhor} \left\vert \partial_1^{d,\eps} \gamma_n^{-1}w_{n, i, j}\right\vert^2  \\
&\leq 4 \eps_n^2 \sum_{(i,j)\in \INNhor} \left\vert \partial_1^{d,\eps} w_{n, i, j}\right\vert^2 
\leq\frac{8(2\delta_n^{ver})^{1/2}}{\eps_n}\Hn(w_n, z_n)\leq \frac{8(2\delta_n^{ver})^{1/2}}{\eps_n} K.\end{split}\label{est:secon_derivatives-liminf} 
\end{equation}
 The right hand side of \eqref{est:secon_derivatives-liminf} is bounded since $\eps_n/(2\delta_n^{ver})^{1/2} \rightarrow \sigma \in (0,\infty)$ as $n\rightarrow \infty$. \\

\quad \emph{\textbf{Step 4.2: Boundedness of $\bm{(\partial_1 u_n)_n}$ in $\bm{L^2(\Omega)}$.}} First, note that it holds $\vert \theta \vert \leq 2\vert \sin(\theta)\vert$ for $\theta \in [-\pi/3, \pi/3]$. Using that $\mathcal{H}^0(\mathcal{A}_n) = 0$, the reformulation of the energy \eqref{gamma:spins:energie_one} and H\"older's inequality, we estimate 
\begin{equation}\begin{split}
& \Vert \partial_1 u_n\Vert_{L^2(\Omega)}^2 \leq  \frac{4\eps_n^2}{2\delta_n^{ver}} \sum_{(i,j)\in \INhor}\vert \hor_{n, i, j}\vert^2\leq  \frac{4\cdot16\eps_n^2}{2\delta_n^{ver}} \sum_{(i,j)\in \INhor}\left\vert \sin\left( \frac{\hor_{n, i, j}}{2} \right) \right\vert^2 = 16 \eps_n^2 \sum_{(i,j)\in \INhor}\left\vert  w_{n,i,j} \right\vert^2  \\
& \leq 16 \left(\eps_n^2 \sum_{(i,j)\in \INhor}\left(\left\vert w_{n,i,j} \right\vert^2-\gamma_n^2\right)\right) + 16 \gamma_n^2 \leq 16 \left(\eps_n^2 \sum_{(i,j)\in \INhor} \left(\left\vert  w_{n,i,j} \right\vert^2-\gamma^2_n\right)^2\right)^{1/2} + 16 \gamma_n^2   \\
& \leq 16\left( \frac{2\eps_n}{(2\delta_n^{ver})^{1/2}}\Hn(w_n, z_n) \right)^{1/2} + 16 \gamma_n^2 \leq 23\left( \frac{\eps_n}{(2\delta_n^{hor})^{1/2}} K \right)^{1/2} + 16 \gamma_n^2,\end{split}
\end{equation}
which is uniformly bounded in $n\in \N$ since $\eps_n/(2\delta_n^{ver})^{1/2} \rightarrow \sigma\in (0,\infty)$ and $\gamma_n^2 \to \gamma^2 \in [0,\infty)$ as $n\rightarrow \infty$. \\

\quad \emph{\textbf{Step 5: Conclusion.} }
By Step 4.1 and Step 4.2 the sequence $(u_n)_n$ is bounded in $\mathcal{W}(\Omega)$. By Theorem \ref{th:embedding} this implies the boundedness in $W^{2,2}(\Omega)$. As $\Omega$ is an extension domain for $W^{2,2}$ (see \cite[Remark 2.5.2]{Ziemer}) we obtain by the Rellich-Kondrachov theorem the existence of $u \in W^{2,2}(\Omega)$ and a (not relabeled) subsequence such that $u_n \rightharpoonup u$ in $W^{2,2}(\Omega)$ and $u_n \to u$ in $W^{1,2}(\Omega)$. 
\end{proof}
We will now prove the assertions (i) and (ii) from Theorem \ref{thm:gamma}.

\begin{proposition}[Local compactness and liminf inequality]\label{prop:lower_gamma_per} Let the assumptions of Theorem \ref{thm:gamma} hold. Then, the statements (i) and (ii) are true for $\Hn\colon L^2(\Omega;\R^2) \rightarrow [0,+\infty]$ and the limit functional $\H\colon L^2(\Omega;\R^2) \rightarrow [0,+\infty]$.
\end{proposition}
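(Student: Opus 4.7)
The plan is to combine the $W^{2,2}$-potentials from Lemma \ref{lem:existenc_potentials} with the energy reformulation \eqref{gamma:spins:energie_one}--\eqref{spin:gamma:reformulation_vertical} and pass to the limit, handling the $p$-weight and the discrepancy between $w_n$ and $\partial_1 u_n$ via the absence of vortices ensured in Step~2 of Lemma \ref{lem:existenc_potentials}. Given $\Hn(w_n,z_n)\le K$, that lemma provides (after extracting a curl-free subsequence) potentials $u_n\in W^{2,2}(\Omega)$ with $u_n(0,\cdot)\equiv 0$, $\nabla u_n=(2\dver_n)^{-1/2}(\pa(\hor_n),\pa(\ver_n))$, and $u_n\to u$ strongly in $W^{1,2}(\Omega)$, weakly in $W^{2,2}(\Omega)$. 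I set $(w,z):=\nabla u$: then $(w,z)$ is automatically curl-free in the distributional sense; the trace $u(0,\cdot)=0$ gives $z(0,\cdot)=\partial_2 u(0,\cdot)=0$; and the periodicity assumption \eqref{gamma:spins:periodic_boundary} forces $|\sin(\hor_n/2)|$ and $|\sin(\ver_n/2)|$ to agree on opposite boundaries, so $|w(0,\cdot)|=|w(1,\cdot)|$ and $|z(\cdot,0)|=|z(\cdot,1)|$ pass to the limit. Hence $(w,z)\in\dom(\H)$.

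For the compactness statement~(i) I would upgrade the strong $L^2$-convergence $\partial_1 u_n\to \partial_1 u=w$ to $w_n\to w$ node by node. The discrepancy decomposes into the cubic $\sin$-vs.-identity error $(2\dver_n)^{-1/2}(2\sin(\hor_n/2)-\hor_n)$ and the piecewise-constant versus piecewise-affine interpolation gap. On the vortex-free subsequence from Step~2 of Lemma \ref{lem:existenc_potentials} one has $|\hor_n|,|\ver_n|\le \pi/8$ and $\|\hor_n\|_{L^2}^2=O(\dver_n)\to 0$; combined with the $L^4$-control on $w_n$ coming from the bulk contribution to $\Hn$, both error terms vanish in $L^2$. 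The same reasoning gives $z_n\to z$.

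For the liminf inequality~(ii), one may assume $\liminf \Hn(w_n,z_n)<\infty$ and pass to a subsequence realizing the liminf; the analysis above identifies $(w,z)=\nabla u$ for some $u\in W^{2,2}(\Omega)$. Setting $\sigma_n\coloneqq \eps_n/\sqrt{2\dver_n}\to\sigma$, the reformulation \eqref{gamma:spins:energie_one}--\eqref{spin:gamma:reformulation_vertical} divided by $\sqrt{2}\eps_n(\dver_n)^{3/2}$ expresses $\Hn(w_n,z_n)$ as $\tfrac{1}{\sigma_n}$ times a Riemann sum for $\int_\Omega[(\gamma_n^2-w_n^2)^2+(1-z_n^2)^2]$ plus $\sigma_n$ times $p$-weighted discrete Dirichlet sums of $|\partial_1^{d,\eps_n}w_n|^2$ and $|\partial_2^{d,\eps_n}z_n|^2$. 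The bulk sums pass to the liminf via Fatou together with strong $L^2$-convergence of $(w_n,z_n)$ and $\gamma_n\to\gamma$, producing $\tfrac1\sigma\int_\Omega[(w^2-\gamma^2)^2+(z^2-1)^2]$. In the gradient sums, the weights $p(\hor_{n,i,j},\hor_{n,i+1,j})$ and $p(\ver_{n,i,j},\ver_{n,i,j+1})$ are bounded below by $1/2$ on the vortex-free subsequence (Lemma \ref{lem:p_convergence}) and converge to $1$ pointwise, since $\hor_n,\ver_n\to 0$ in measure; a truncation/dominated-convergence argument then allows replacing them by $1$ from below. Finally, up to the same cubic $\sin$-correction, $\partial_1^{d,\eps_n}w_n$ coincides with $\partial_{11}u_n$ on each triangle and $\partial_{11}u_n\rightharpoonup \partial_{11}u=\partial_1 w$ weakly in $L^2(\Omega)$ (analogously in the vertical direction), so weak lower semicontinuity of the $L^2$-norm yields the remaining $\sigma\int_\Omega[(\partial_1 w)^2+(\partial_2 z)^2]$.

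The main obstacle is the gradient term in (ii): one must simultaneously (a) replace the discrete derivative of the piecewise-constant $w_n$ by $\partial_{11}u_n$ while absorbing the cubic $\sin$-correction $(2\dver_n)^{-1/2}(2\sin(\hor_n/2)-\hor_n)$, and (b) discard the $p$-weight without sacrificing lower semicontinuity. Both steps are made possible only by the vortex-free reduction in Step~2 of Lemma \ref{lem:existenc_potentials}, which keeps all discrete angles below $\pi/8$ and hence $p$ uniformly bounded away from $0$ and pointwise convergent to $1$.
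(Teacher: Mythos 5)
Your proposal mirrors the paper's architecture closely: invoke the $W^{2,2}$-potentials $u_n$ from Lemma~\ref{lem:existenc_potentials}, set $(w,z)=\nabla u$, verify $(w,z)\in\dom(\H)$, split $w_n-\partial_1 u_n$ into cubic sine-error plus interpolation gap to obtain compactness, use Fatou for the double-well part of the liminf, and use weak lower semicontinuity of the $L^2$-norm of $\partial_{11}u_n$ for the gradient part. The compactness part and the bulk part are fine; your $L^4$-based bound on the cubic error (using $\|w_n\|_{L^4}$ controlled by the double-well energy, together with $|\hor_n|\le\pi/8$) is a legitimate variant of the paper's estimate. Your derivation of the boundary conditions via traces of $W^{2,2}$ functions is also an acceptable alternative to the paper's integration-by-parts argument.

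The genuine gap is in the gradient liminf. You assert that the vortex-free reduction, which gives $|\hor_n|,|\ver_n|\le\pi/8$, implies that $p$ is ``uniformly bounded away from $0$ and pointwise convergent to $1$,'' and that a truncation/dominated-convergence argument then lets you discard the weight $p$. But the bound $|\hor_n|\le\pi/8$ alone does not drive $p$ to $1$: by \eqref{gamma:p:reformulated}, e.g.\ $p(\pi/8,\pi/8^-)\approx\cos(\pi/4)/\cos^2(\pi/16)\approx 0.73$, so as long as angles hover near $\pi/8$ on some lattice points the weight remains bounded away from $1$ there. The same issue affects the comparison of $\partial_1^{d,\eps_n}w_n$ with $\partial_{11}u_n$: the discrete derivative of the cubic correction satisfies only $|\partial_1^{d,\eps_n}w_n-\partial_{11}u_n|\lesssim\max\{|\hor_{n,i,j}|,|\hor_{n,i+1,j}|\}^2\,|\partial_{11}u_n|$, which is a bounded (not vanishing) multiple of $\partial_{11}u_n$ under $|\hor_n|\le\pi/8$. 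Your $L^2$-smallness $\|\hor_n\|_{L^2}^2=O(\dver_n)$ gives convergence in measure, but dominated convergence then needs equi-integrability of $|\partial_{11}u_n|^2$ that you do not have a priori, so the ``bad'' cells could in principle carry an order-one fraction of the gradient energy.

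What is actually needed — and what the paper establishes before it can drop the weight — is the stronger uniform estimate $s_n^{hor}:=\sup_{(i,j)}|\hor_{n,i,j}|^2\to 0$, deduced from the double-well part of the energy by observing that each summand is bounded by the full sum, i.e.\ $\sup_{(i,j)}|1-\dhor_n-\cos(\hor_{n,i,j})|^2\le\eps_n^{-2}\cdot\sqrt{2}\eps_n(\dver_n)^{3/2}\Hn(w_n,z_n)\to 0$, combined with $\arccos(1-\dhor_n)\to 0$. This gives both the uniform convergence $p\to 1$ (via continuity of $p$ at $(0,0)$, Lemma~\ref{lem:p_convergence}) and the vanishing of the cubic correction to the discrete derivative. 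Alternatively one can avoid the $L^\infty$ estimate by a good/bad set decomposition (bad set of vanishing measure where the angles are not small), using that the restriction $\mathbf{1}_{\text{bad}_n}\partial_{11}u_n\rightharpoonup 0$ weakly in $L^2$ because $\|\phi\mathbf 1_{\text{bad}_n}\|_{L^2}\to 0$ for each test function $\phi$ by absolute continuity; this seems to be what you are alluding to, but it must be carried out explicitly — the reason you give is not by itself sufficient.
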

\begin{proof}
We divide the proof into two parts. First, we prove the local compactness. Subsequently, we prove the desired liminf inequality for the periodic problem.\\
 
\quad \textbf{Local compactness.} Let $(w_n, z_n)\in L^2(\Omega;\R^2)$ be uniformly bounded in energy, i.e., there is $K> 0$ such that 
\begin{equation}
    \label{gamma:periodic:uniform_lower}
    \Hn(w_n, z_n) \leq K \quad \textup{ for all } n\in \N.
\end{equation}
As in the proof of Lemma \ref{lem:existenc_potentials} we denote by $v_n \in \Sbound$ the corresponding spin fields and by $(\hor_n,\ver_n) \colon \Omega \cap\eps_n \Z^2 \rightarrow [-\pi, \pi)^2$ the associated angular velocity fields. Again, we identify the functions $w_n, z_n: \Omega \to \R$ with the discrete functions $\gamma_n w^{\Delta_n}(v_n)$ and $z^{\Delta_n}(v_n)$ as defined in \eqref{def:chirality_order_parameters}. The strategy will then be to use the potentials $u_n \in \Sbound$ satisfying the assertions of Lemma \ref{lem:existenc_potentials} to conclude the proof of the compactness by showing 
\begin{equation}\left\Vert  \nabla u_n - \left( \begin{array}{c}
   w_n \\
     z_n
\end{array}\right)\right\Vert_{L^2(\Omega;\R^2)}  \longrightarrow 0 \quad \textup{ as } n \rightarrow \infty. \label{convergence_difference_L2}\end{equation}
 
\quad\emph{\textbf{Step 1. Definition of a suitable candidate $\bm{(w,z) \in \dom(\H).}$}} Let $u_n\in W^{2,2}(\Omega)$ be the sequence of associated potentials from Lemma \ref{lem:existenc_potentials} that satisfies the identities
\begin{equation}
    \label{local_comp:id_potential}
    \nabla u_n =  ((2\delta_n^{ver})^{-1/2}\pa(\hor_n) ,(2\delta_n^{ver})^{-1/2} \pa(\ver_n))
\end{equation}
and \[  \partial_{11} u_n = (2\delta_n^{ver})^{-1/2} \partial_1^{d,\eps} \hor_n, \ \partial_{22} u_n = (2\delta_n^{ver})^{-1/2}\partial_2^{d,\eps} \ver_n\quad \textup{ on }\Omega \cap\eps\Z^2. \]
In particular, as in the proof of Lemma \ref{lem:existenc_potentials}, we obtain for
\[
\mathcal{A}_n := \left\{ (i,j) \in \INNhor\colon \max\{\vert \hor_{i,j}\vert, \vert \hor_{i,j+1}\vert\} \geq\frac\pi8\right\} \bigcup \left\{ (i,j)\in \INNver\colon \max\{ \vert \ver_{i,j} \vert , \vert \ver_{i+1,j}\vert \}\geq \frac\pi8\right\}
\]
that
\[
\mathcal{H}^0(\mathcal{A}_n) = 0.
\]
Furthermore, there is $u\in W^{2,2}(\Omega)$ and a subsequence (not relabeled) such that $u_n \rightarrow u$ strongly in $W^{1,2}(\Omega)$ and $u_n\rightharpoonup u$ weakly in $W^{2,2}(\Omega)$ as $n\rightarrow \infty.$ Then define $(w,z)^T \coloneqq \nabla u\in W^{1,2}(\Omega;\R^2).$ It follows $\textup{curl}((w,z)) = 0$ in $\Omega$. Next, we will argue that it holds
 \begin{equation}
     \label{gamma:two:periodic:boundary}
     z(0, \cdot) = 0, \ \vert z(\cdot, 0)\vert = \vert z(\cdot, 1)\vert, \quad \textup{ and }\quad \vert w(0, \cdot) \vert = \vert w(1, \cdot)\vert .
 \end{equation}
 Let $\varphi\in C^1(\Omega)\cap C(\overline{\Omega})$ with $\varphi(\cdot, x) \equiv 0$ for $x\in \{0,1\} $ and $\varphi(1, \cdot) \equiv 0.$ Using integration by parts, the weak convergence $ \nabla u_n \rightharpoonup (w,z)^T$ in $W^{1,2}(\Omega;\R^2)$ and $\partial_2 u_n(0, \cdot) = 0$, we conclude 
 \begin{equation}
\int_0^1 z(0, y) \varphi(0,y)\textup{d}y = \int_\Omega  \varphi \partial_1 z + z\partial_1 \varphi \dx\dy = \lim_{n\rightarrow \infty} \int_\Omega  \varphi \partial_1 \partial_2 u_n + \partial_2 u_n \partial_1 \varphi \dx\dy = 0.
 \end{equation}
 Thus, $z(0, \cdot) = 0.$ Next, we verify the periodicity condition from \eqref{gamma:two:periodic:boundary}. The uniform boundedness of $\Hn (w_n,z_n)$ implies that the associated spin fields $v_n \colon \Omega \cap \eps_n\Z^2\rightarrow \mathbb{S}^1$ satisfy for all  $k = 0, \dots, \left\lfloor \frac1\eps_n\right\rfloor$
 \[    v_{n,0,k} \cdot v_{n,1, k} = v_{n,\lfloor \frac1\eps_n \rfloor - 1, k} \cdot v_{n,\lfloor \frac1\eps_n \rfloor, k} \quad  \textup{ and } \quad v_{n,k, 0}\cdot v_{n,k, 1}= v_{n,k, \lfloor \frac1\eps_n\rfloor -1} \cdot v_{n,k, \lfloor \frac1\eps_n\rfloor}.  \] Hence, 
 \[ \cos(\hor_{n,0,k})  = \cos\left(
 \hor_{n, \lfloor \frac1\eps_n \rfloor-1, k}\right) \quad  \textup{ and } \quad \cos(\ver_{n, k, 0})= \cos\left(\ver_{n,k, \lfloor \frac1\eps_n\rfloor -1}\right)  \quad \textup{ for all } k = 0, \dots, \left\lfloor \frac1\eps_n\right\rfloor,  \]
 which yields $\vert \partial_1 u_n(0, \cdot) \vert = \vert \partial_1 u_n(1, \cdot)\vert$ and $\vert \partial_2 u_n(\cdot, 0)\vert = \vert \partial_2 u_n(\cdot, 1)\vert$ in the sense of traces.
By the weak convergence of $\nabla u_n$ to $(w,z)$ in $W^{1,2}(\Omega; \R^2)$, the previous observations imply $(w,z) \in \dom(\H).$\\
 
 \quad \emph{\textbf{Step 2. Convergence of $\bm{((w_n, z_n))_n}$ towards $\bm{\nabla u}$ in $\bm{L^2(\Omega;\R^2)}$.}} We only prove the strong convergence $w_n -\partial_1 u_n\rightarrow 0$ in $L^2(\Omega)$ as $n\rightarrow \infty$. The convergence $z_n -\partial_2 u_n\rightarrow 0$ in $L^2(\Omega)$ as $n\to \infty$ follows similarly. We observe
 \begin{equation}\begin{split}
 & \Vert w_n - \partial_1 u_n \Vert_{L^2(\Omega)} \leq \Vert w_n -(2\delta_n^{ver})^{-1/2} \pa({\theta}_n^{hor})\Vert_{L^2(\Omega)} + \Vert (2\delta_n^{ver})^{-1/2} \pa(\theta_n^{hor}) -\partial_1 u_n\Vert_{L^2(\Omega)} \\
 & =: A_{n} + B_{n}.\end{split}
 \end{equation}
 We estimate $A_n$ and $B_{n}$ for $n\in \N$ separately. In order to estimate $B_n$ fix $(i,j) \in \INhor$ and a corresponding square $Q_{\eps_n}(i,j)$. 
 By the definition of $\pa(\theta_n^{hor})$, \eqref{def:gamma:triangulation}, and the identity \eqref{local_comp:id_potential}, we find for $(x,y) \in T_{\eps_n}^-(i,j)$
 \begin{equation}
 \begin{split}
& ( 2\delta_n^{ver})^{-1/2} \left\vert \hor_{n,i, j} - \pa(\theta_{n}^{hor}) (x,y) \right\vert \leq ( 2\delta_n^{ver})^{-1/2} \left\vert \partial_1^{d,\eps_n} \hor_{n, i,j} (x-i\eps_n) + \partial_2^{d,\eps_n}\hor_{n, i, j}(y-j\eps_n) \right\vert \\
& \quad \leq \left\vert \partial_{11}u_n (i\eps_n, j\eps_n) \right\vert (x-i\eps_n) + \left\vert \partial_{21} u_n(i\eps_n,j\eps_n) \right\vert (y-j\eps_n) \end{split}
 \end{equation}
 and for $(x,y) \in T_{\eps_n}^+(i,j)$
  \begin{equation}
  \begin{split}
& (2\delta_n^{ver})^{-1/2} \left\vert \hor_{n,i, j} - \pa(\theta_{n}^{ hor}) (x,y) \right\vert\nonumber    \\
& \leq (2\delta_n^{ver})^{-1/2} \left\vert \partial_1^{d,\eps_n} \hor_{n, i,j+1} (x-(i+1)\eps_n) + \partial_2^{d,\eps_n}\hor_{n, i+1, j}(y-(j+1)\eps_n) + \hor_{n, i+1, j+1} - \hor_{n, i, j}\right\vert   \nonumber \nonumber \\
& \leq \left\vert \partial_{11}^{d,\eps_n}u_n (i\eps_n, (j+1)\eps_n) \right\vert (x-(i+1)\eps_n) + \left\vert \partial_{21}^{d,\eps_n} u_n((i+1)\eps_n,j\eps_n) \right\vert (y-(j+1)\eps_n)\nonumber \\
& \quad +\eps_n \left\vert \partial_{11}^{d,\eps_n} u_n(i\eps_n,j\eps_n)\right\vert +\eps_n \left\vert \partial_{21}^{d,\eps_n}u_n((i+1)\eps_n, j\eps_n)\right\vert. \end{split}
 \end{equation}
Consequently, 
 \begin{align}
 B_{n} &\leq \left( \sum_{(i,j)\in\INhor}  \Vert (2\delta_n^{ver})^{-1/2} \pa(\theta_n^{hor}) - \partial_1 u_n\Vert_{L^2(Q_{\eps_n}(i,j))}^2 \right)^{1/2} \\ & \leq C\eps_n\left( \Vert \partial_{11} u_n \Vert_{L^2(\Omega)} + \Vert \partial_{21} u_n \Vert_{L^2(\Omega)} \right).
\end{align}
As the sequence $(u_n)_n$ is uniformly bounded in $W^{2,2}(\Omega)$, see Lemma \ref{lem:existenc_potentials}, we obtain $B_n \rightarrow 0$ as $n\rightarrow \infty.$\\

It remains to show $A_n \rightarrow 0$ as $n\rightarrow \infty$. First, observe that for all $\theta \in [-\pi/3, \pi/3]$ and $\arccos(1-\delta) < \pi/3$ it holds
\begin{equation}
    \label{gamma:two:arccos_inequality}
    \vert \theta^2 -(\arccos(1-\delta))^2\vert\leq 4 \left\vert \int_{\vert \arccos(1-\delta)\vert }^{\vert \theta\vert} \frac12  s\textup{ d}s\right\vert \leq 4 \left\vert \int_{\vert \arccos(1-\delta)\vert}^{\vert \theta\vert} \sin(s) \textup{ d}s\right\vert \leq 4\vert 1-\delta -\cos(\theta) \vert.
\end{equation}
Note that in Step 1 we argued that $\mathcal{H}^0(\mathcal{A}_n) = 0$ for all $n\in \N$. Therefore, there are no indices $(i,j)\in \INhor$ such that $\vert \hor_{n,i,j}\vert \geq \pi/3 $. Moreover, there is a further subsequence that satisfies the condition $\arccos(1-\delta_n^{hor}) < \pi/3$ for all $n\in \N$. By inequality \eqref{gamma:two:arccos_inequality} and $1-\cos(\theta) = 2\sin^2(\theta/2)$ for $\theta \in \R$, we conclude
\begin{equation}
\begin{split}
& \sup\left\{ \left\vert (\hor_{n,i,j})^2 - (\arccos(1-\delta_n^{hor}))^2 \right\vert^2  \colon (i,j) \in \INhor\right\} \\ \leq &4\sup\left\{ \left\vert 1-\delta_n^{hor} -\cos(\hor_{n, i, j}) \right\vert^2 \colon (i,j) \in \INhor\right\} \\
 \leq &\frac{4}{\eps_n^2} \eps_n^2 \sum_{(i,j) \in \INhor}\left\vert 1-\delta_n^{hor} -\cos(\hor_{n, i, j}) \right\vert^2 \\
 = & \frac{4}{\eps_n^2} \eps_n^2 \sum_{(i,j) \in \INhor}\left\vert 2\sin^2\left(\frac{\hor_{n, i, j}}{2}\right)-\delta_n^{hor} \right\vert^2 \\
 = & \frac{4}{\eps_n^2} (\delta_n^{ver})^2\eps_n^2 \sum_{(i,j) \in \INhor}\vert \gamma_n^2 - (w_{n,i,j})^2 \vert^2\nonumber \\
 = & \frac{4}{\eps_n^2} (\delta_n^{hor})^2\eps_n^2 \sum_{(i,j) \in \INhor}\vert 1 - (\gamma_n^{-1}w_{n,i,j})^2 \vert^2\nonumber\\
\leq &\frac{4\sqrt{2}(\delta_n^{ver})^{3/2}}{\eps_n} \Hn(w_n, z_n)\leq \frac{4\sqrt{2}(\delta_n^{ver})^{3/2}}{\eps_n} K \eqqcolon \beta_n\delta^{ver}_n. \end{split}
\end{equation}
Since $\beta_n \rightarrow 4/\sigma$ as $n\rightarrow \infty$, there is $C> 0$ such that $\vert \beta_n\vert \leq C <\infty$ for all $n\in \N$. By the triangle inequality, we find
\begin{equation}\label{proof:lower:uniform_bound}
  s_n^{hor} \coloneqq  \sup_{n\in \N} \vert \hor_{n} \vert^2 \leq \beta_n^{1/2} (\delta_n^{ver})^{1/2} +4\delta_n^{hor} \rightarrow 0\quad \textup{ as } n\rightarrow \infty,
\end{equation}
where we used that $\arccos(1-\delta) \leq 2\delta^{1/2}$ for $\delta\in (0,1)$. Using the estimate $\vert \sin(\theta) -\theta\vert \leq \vert\theta\vert^3 $ for $\theta\in \R$, we conclude 
\begin{equation}\begin{split}
 A_n^2 &\leq \eps_n^2 \sum_{(i,j)\in \INhor} \vert w_{n,i, j}- (2\delta_n^{ver})^{-1/2} \hor_{n, i, j}\vert^2 = \frac{2\eps_n^2}{\delta_n^{ver}} \sum_{(i,j)\in \INhor} \left\vert \sin\left(\frac{\hor_{n,i,j}}{2}\right)- \frac{\hor_{n, i, j}}{2}\right\vert^2  \\
&\leq  \frac{2\eps_n^2}{\delta_n^{ver}} \sum_{(i,j)\in \INhor} \left\vert \hor_{n, i, j}\right\vert^6\leq C \left( \beta_n^{3/2} (\delta_n^{ver})^{1/2} + \gamma_n^2 (\delta_n^{hor})^2 \right) \rightarrow 0 \quad \textup{ as } n\rightarrow \infty.\end{split}
\end{equation}
This concludes the proof of \eqref{convergence_difference_L2}. \\

\quad \textbf{Liminf inequality.} We show that for $(w,z) \in L^2(\Omega;\R^2)$ and any sequence $(w_n, z_n)\in L^2(\Omega;\R^2)$ for $n\in \N$ converging strongly to $(w,z)$ in $L^2(\Omega;\R^2)$ as $n\rightarrow \infty$, we have 
\[
\H(w,z) \leq \liminf\limits_{n\rightarrow \infty} \Hn(w_n, z_n). 
\] 
The proof of the liminf-inequality can be done separately for the horizontal and vertical contributions of the energy.
We will only present the proof for the horizontal terms. 
Precisely, we show
\begin{equation}
  \frac1\sigma  \int_\Omega (\gamma^2-w^2)^2 \textup{ d}x\textup{d}y +\sigma \int_\Omega (\partial_2w)^2 \textup{ d}x\textup{d}y \leq \liminf\limits_{n\rightarrow \infty}\Hnhor(w_n).\label{liminf: horizontal}
\end{equation}
We may assume without loss of generality that $\liminf_{n \rightarrow \infty} \Hn(w_n, z_n) < \infty$, $\liminf_{n \rightarrow \infty} \Hn(w_n, z_n) = \lim_{n \rightarrow \infty} \Hn(w_n, z_n)$ and $\Hn(w_n,z_n) \leq C$ for all $n\in \N$.
By the local compactness, there exists a (not relabeled) subsequence of $(w_{n}, z_{n})$ that converges to some $(w,z)^T \in \dom(\H)$ strongly in $L^2(\Omega;\R^2)$. The field $(w, z)$ satisfies the properties $\textup{curl}((w, z)) \equiv 0$, 
\begin{equation}
   z(0, \cdot) = 0, \ \vert z(\cdot, 0) \vert = \vert z(\cdot, 1) \vert, \quad \textup{ and }\quad \vert w(0, \cdot) \vert = \vert w(1, \cdot)\vert. \label{gamma:two:per:liminf_boundary}
\end{equation} 
Moreover, let $u_n\in W^{2,2}(\Omega)$ be the associated sequence of potentials from Step 1 of the proof of the local compactness.
Recall, that there it holds $(\nabla u_n)_n$ converges to $(w,z)^T$ strongly in $L^2(\Omega;\R^2)$ and weakly in $W^{1,2}(\Omega;\R^2)$. 
First, we observe that it follows by the strong convergence $w_n \to w$ in $L^2$ and $\sigma_n:= \eps_n/\sqrt{2 \delta_n^{ver}} \to \sigma$ that
\begin{equation}
\begin{split}
 \frac1\sigma \int_\Omega (\gamma^2 -w^2)^2\textup{ d}x\textup{d}y = &\lim \limits_{n\rightarrow \infty} \frac{1}{\sigma_n}\int_\Omega (\gamma_n^2 -w_n^2)^2\textup{ d}x\textup{ d}y \\
 =&  \lim\limits_{n\rightarrow \infty} \frac{\sqrt{2}}{\eps_n } (\delta_n^{ver})^{1/2} \eps_n^2 \sum_{(i,j)\in \INhor} ( \gamma_n^2 - w_{n,i,j}^2)^2 \\
= &\liminf\limits_{k\rightarrow \infty}\frac{2}{\sqrt{2}\eps_n(\delta_n^{ver})^{3/2}} (\delta_n^{ver})^2 \eps_n^2 \sum_{(i,j)\in \INhor} ( \gamma_n^2 - w_{n,i,j}^2)^2\\
= &\liminf\limits_{k\rightarrow \infty}\frac{1}{\sqrt{2}\eps_n(\delta_n^{ver})^{3/2}} 2(\delta_n^{hor})^2 \eps_n^2 \sum_{(i,j)\in \INhor} ( 1 - (\gamma_n^{-1}w_{n,i,j})^2)^2.
\end{split}\label{proof:lower:ver:elastic:liminf}
\end{equation}
Next, the weak lower semi-continuity of the $L^2$-norm, the weak convergence $\partial_{1} u_n\rightharpoonup w$ in $W^{1,2}(\Omega)$ as $n\rightarrow \infty$ and the definition of $u_n$ (recall Figure \ref{fig:construction_triangular}) yield 
\begin{equation}
\begin{split}
& \sigma \int_\Omega \vert \partial_1 w\vert^2 \textup{ d}x\textup{d}y\leq \liminf\limits_{n\rightarrow\infty} \sigma_n \int_\Omega \vert \partial_{11} u_n\vert^2 \textup{ d}x\textup{d}y = \liminf\limits_{n\rightarrow \infty} \frac{\eps_n^3}{(2\delta_n^{ver})^{1/2}} \sum\limits_{(i,j)\in \INNhor} \left\vert (2\delta_n^{ver})^{-1/2}\partial_{1}^{d,\eps_n} \hor_{n,i,j}\right\vert^2 \\
& = \liminf\limits_{n\rightarrow \infty} \frac{4\eps_n}{(2\delta_n^{ver})^{3/2}}\sum\limits_{(i,j)\in \INNhor} \left\vert \frac{\hor_{n,i+1,j}}{2}-\frac{\hor_{n,i,j}}{2} \right\vert^2.
\end{split}\end{equation}
By Young's inequality and \eqref{proof:lower:uniform_bound} 
\begin{align}
        &\frac{4\eps_n}{(2\delta_n^{ver})^{3/2}}\sum\limits_{(i,j)\in \INNhor} \left\vert \frac{\hor_{n,i+1,j}}{2}-\frac{\hor_{n,i,j}}{2} \right\vert^2  \\ 
        & \quad = \frac{4\eps_n}{(2\delta_n^{ver})^{3/2}}\sum\limits_{(i,j)\in \INNhor} \left\vert \int^{\frac{\hor_{n,i+1,j}}{2}}_{\frac{\hor_{n,i,j}}{2}} 1\textup{ d}s \right\vert^2 \\
        & \quad \leq  \frac{4\eps_n}{(\cos((s_n^{hor})^{1/2}))^2(2\delta_n^{ver})^{3/2}}\sum\limits_{(i,j)\in \INNhor} \left\vert \sin\left(\frac{\hor_{n,i+1,j}}{2}\right)-\sin\left(\frac{\hor_{n,i,j}}{2}\right) \right\vert^2\\
        & \quad = \frac{2}{(\cos((s_n^{hor})^{1/2})^2\eps_n (2\delta_n^{ver})^{3/2}} \dhor_n \eps_n^2 \eps_n^2 \sum\limits_{(i,j)\in \INNhor} \left\vert \partial_1^{d, \eps}\gamma_n^{-1} w_{n, i, j}\right\vert^2.
 \label{gamma:two:rest}
\end{align}
Since $s_n^{hor} \rightarrow 0$ as $n\rightarrow \infty$, we have $1/(\cos((s_n^{hor})^{1/2}) \rightarrow 1$ as $n\rightarrow \infty$. Together with the uniform convergence $p(\theta_1, \theta_2)\rightarrow 1$ as $(\theta_1, \theta_2) \rightarrow (0,0)$, see Lemma \ref{lem:p_convergence}, we conclude 
\begin{equation}
    \begin{split}
     \sigma \int_\Omega \vert \partial_1 w\vert^2 \textup{ d}x\textup{d}y & \leq \liminf_{n\rightarrow \infty} \frac{2}{(\cos((s_n^{hor})^{1/2})^2\eps_n (2\delta_n^{ver})^{3/2}} \dver_n \eps_n^4 \sum\limits_{(i,j)\in \INNhor} \left\vert \partial_1^{d, \eps} w_{n, i, j}\right\vert^2\\
     & \leq \liminf\limits_{n\rightarrow \infty} \frac{1}{\sqrt{2}\eps_n(\delta_n^{ver})^{3/2}} \dver_n \eps_n^4  \sum\limits_{(i,j)\in \INNhor} p(\hor_{n,i,j}, \hor_{n, i, j+1})\left\vert \partial_{1}^{d,\eps_k}  w_{n,i,j}\right\vert^2. 
    \end{split}
\end{equation}
This finishes the proof of \eqref{liminf: horizontal}.
Arguing similarly for the vertical contribution proves the liminf inequality. 
\end{proof}

\subsection{Upper bound: Existence of a recovery sequence}\label{sec:upperbound}
In this section, we prove the upper bound stated in (iii) of Theorem \ref{thm:gamma}. We start with the construction of recovery sequences for a dense subset of $\dom(\H)$. Then a standard diagonal argument will yield the existence of a recovery sequence for all $(w,z) \in L^2(\Omega;\R^2)$. 
\begin{lemma}[Existence of a recovery sequence for a dense subset] \label{lem:recovery-sequence_smoot} Let $\eps_n\in (0,1/2), \Delta_n = (\delta_n^{hor}, \delta_n^{ver})$ satisfy 
\begin{equation}
\quad \eps_n \rightarrow 0, \quad  \Delta_n = (\dhor_n, \dver_n) \rightarrow (0,0), \quad \frac{\eps_n}{\sqrt{2\dver_n}} \rightarrow \sigma \in (0,\infty), \quad \text{ and } \quad \gamma_n^2= \frac{\dhor_n}{\dver_n} \rightarrow \gamma^2 \in [0,\infty).
\end{equation}
For every $(w,z) \in \dom(H)\cap C^\infty(\overline{\Omega})$ there is a sequence $(w_n, z_n) \in \dom(\Hn)$ for $n\in \N$ such that $(w_n, z_n)\rightarrow (w,z)$ in $L^2(\Omega;\R^2)$ as $n\rightarrow \infty$ and
\begin{equation}\label{limsup:spins:inequality}
    \limsup\limits_{n\rightarrow \infty} \Hn(w_n, z_n) \leq \H(w,z).
\end{equation} 
\end{lemma}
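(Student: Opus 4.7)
The plan is to lift the problem to the level of a scalar potential. Since $(w,z)\in\dom(\H)\cap C^\infty(\overline\Omega)$ is curl-free on the simply connected domain $\Omega$, there exists $\psi\in C^\infty(\overline\Omega)$ with $\nabla\psi=(w,z)$, and because $z(0,\cdot)=0$ we may normalize so that $\psi(0,\cdot)\equiv 0$. Setting $\sigma_n:=\eps_n/\sqrt{2\delta_n^{ver}}\to\sigma$, I define the discrete spin field
\[
u_n(i\eps_n,j\eps_n):=\Rot\!\left(\tfrac{1}{\sigma_n}\psi(i\eps_n,j\eps_n)\right)\!\begin{pmatrix}0\\1\end{pmatrix}.
\]
By construction the associated angular velocities $\hor_{n,i,j}$ and $\ver_{n,i,j}$ are the discrete partial differences of $\psi/\sigma_n$, so they are simultaneously (i) discrete-curl-free, hence produce no vortices, and (ii) of size $O(\sqrt{\delta_n^{ver}})$ uniformly by smoothness of $\psi$, hence in $[-\pi,\pi)$ for $n$ large. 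The Dirichlet condition $u_n(0,\cdot)=(0,1)^T$ is automatic from $\psi(0,\cdot)\equiv 0$.

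Bulk convergence then follows from Taylor expansion. The identity $\hor_{n,i,j}=\sqrt{2\delta_n^{ver}}\,w(i\eps_n,j\eps_n)+O(\eps_n\sqrt{\delta_n^{ver}})$ together with $\sin(t/2)=t/2+O(t^3)$ yields uniform convergence of the chirality $w_n=\gamma_n\sqrt{2/\delta_n^{hor}}\sin(\hor_n/2)=\sqrt{2/\delta_n^{ver}}\sin(\hor_n/2)$ to $w$, and analogously $z_n\to z$; in particular $(w_n,z_n)\to(w,z)$ strongly in $L^2$. Using the reformulations \eqref{gamma:spins:energie_one}--\eqref{spin:gamma:reformulation_vertical} and dividing by $\sqrt{2}\eps_n(\delta_n^{ver})^{3/2}$, the renormalized horizontal elastic term takes the form $\sigma_n^{-1}\,\eps_n^2\sum(\gamma_n^2-w_{n,i,j}^2)^2$ and converges, as a Riemann sum, to $\sigma^{-1}\int_\Omega(w^2-\gamma^2)^2$; the horizontal gradient term reads $\sigma_n\,\eps_n^2\sum p(\hor_{n,i,j},\hor_{n,i+1,j})|\partial_1^{d,\eps_n}w_{n,i,j}|^2$ and converges to $\sigma\int_\Omega(\partial_1 w)^2$ using $p\to 1$ on small arguments from Lemma \ref{lem:p_convergence} together with $\partial_1^{d,\eps_n}w_n\to\partial_1 w$ in $L^2$ by smoothness of $\psi$. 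The vertical contributions are analogous.

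The hard part is to enforce the periodicity condition \eqref{gamma:spins:periodic_boundary} exactly. The interior construction only gives $\cos(\hor_{n,0,k})-\cos(\hor_{n,\lfloor 1/\eps_n\rfloor-1,k})=O(\eps_n\delta_n^{ver})$, since the domain condition $|w(0,\cdot)|=|w(1,\cdot)|$ together with the Taylor computation above yields $\hor_{n,\lfloor 1/\eps_n\rfloor-1,k}=\pm\hor_{n,0,k}+O(\eps_n\sqrt{\delta_n^{ver}})$, and analogously in the vertical direction. I therefore modify $u_n$ on a boundary strip of width $\eps_n^\beta$ for a small exponent $\beta\in(0,1)$, linearly interpolating the angular velocities on the extremal columns and rows so that the relevant cosines coincide exactly; the per-edge correction is of order $O(\eps_n\sqrt{\delta_n^{ver}})$, well below any vortex threshold, so all angular velocities stay in $[-\pi/8,\pi/8]$ and no vortices are created. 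The strip's contribution to the renormalized energy is bounded by its width times the bounded Lagrangian density, i.e.\ $O(\eps_n^\beta)\to 0$, so it is absorbed into the limsup. Combining all ingredients, $(w_n,z_n):=\pc(T^{\Delta_n}(u_n))$ converges to $(w,z)$ in $L^2(\Omega;\R^2)$ and satisfies $\limsup_n \Hn(w_n,z_n)\leq \H(w,z)$. The delicate point is choosing $\beta$ so as to simultaneously produce exact periodicity, preserve the Dirichlet condition at $x_1=0$, prevent vortex formation, and keep the strip's energy contribution negligible.
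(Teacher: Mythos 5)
Your core construction — lift $(w,z)$ to a smooth scalar potential $\psi$ with $\nabla\psi=(w,z)$ and $\psi(0,\cdot)\equiv 0$, read off the angular velocities as discrete first differences of $\psi/\sigma_n$, and recover the chiralities — is a viable route and runs parallel to the paper's. The paper instead sets $\hor_n := 2\arcsin\bigl(\sqrt{\delta_n^{ver}/2}\,\tilde w_n\bigr)$ for a lattice discretization $\tilde w_n$ of $w$ (so that the horizontal chirality reproduces $\tilde w_n$ exactly) and obtains $\ver_n$ by telescoping the vertical differences of $\hor_n$; the convergence of the vertical chirality to $z$ then uses $\partial_2 w = \partial_1 z$, which is the same information that you encode in $\psi$. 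Both constructions are discretely curl-free by design, and your Taylor/Riemann-sum bulk estimates are in line with the paper's.

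The genuine gap is in the correction for the periodicity condition \eqref{gamma:spins:periodic_boundary}. As phrased, ``linearly interpolating the angular velocities on the extremal columns and rows'' is not a legal modification. Changing $\hor_n$ along the rightmost column by $O(\eps_n\sqrt{\delta_n^{ver}})$ without a coupled change of $\ver_n$ produces cells where $\eps_n\curln((\hor_n,\ver_n))$ equals a small nonzero number, i.e.\ lies outside $\{-2\pi,0,2\pi\}$; such a field is not the angular velocity field of any spin map, so $\Hn$ is $+\infty$ on it and there is no ``$O(\eps_n^\beta)$'' energy contribution to estimate. A compatible correction must be performed on the potential $\psi$ itself (equivalently, as coupled modifications of $\hor_n$ and $\ver_n$), and you then have to verify that the horizontal periodicity, the vertical periodicity, the Dirichlet condition $\psi(0,\cdot)=0$, and the curl constraint can all be met simultaneously — including the case in which $|w(0,\cdot)|=|w(1,\cdot)|$ holds with a sign change, which a naive ``linear interpolation'' cannot absorb but the evenness of $\cos$ can. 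The paper avoids a post-hoc strip surgery entirely by building the boundary values of $w$ and $z$ into the lattice discretization $(\tilde w_n,\tilde z_n)$ of Step~1 (so the periodicity of $\phi_n^{hor}$ and $\phi_n^{ver}$ holds by construction); either reproduce that mechanism or spell out a genuinely curl-compatible strip modification — as it stands the ``delicate point'' you flag at the end is exactly where the argument is incomplete.
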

\begin{proof}
Let $(w,z) \in \dom(\H)\cap C^\infty(\overline{\Omega})$. 
By definition of $\dom(\H)$ the field $(w,z)$ satisfies
\[z(0, \cdot) = 0, \ \vert z (\cdot, 0)\vert = \vert z(\cdot, 1)\vert, \quad \textup{ and }\quad \vert w(0, \cdot)\vert = \vert w(1, \cdot)\vert.\] 
We start by introducing a discretization  of $(w,z)$ on $\Omega \cap \eps_n\Z^2$. 
Note that it is not clear that this can be interpreted as the angular velocity field of a spin field.
Hence, we will have to modify this discretization to obtain an angular velocity field $(\hor_n, \ver_n)\colon \Omega \cap\eps_n\Z^2 \rightarrow [-\pi, \pi)^2$ that induces a spin field $v_n \in \Sboundper(\Omega;\eps_n).$ 
The actual recovery sequence will then be given by the sequence of chiralities $(w_n, z_n)$ associated with this spin field $v_n$.  \\

\quad \emph{\textbf{Step 1. The discretization $\bm{(\tilde{w}_n, \tilde{z}_n) \colon \Omega \cap \eps_n \mathbb{Z}^2 \rightarrow {\rm I\!R}^2}$ of $\bm{(w, z)}$ on $\bm{\Omega \cap\eps_n\Z^2}$.}} Define for $(i,j) \in \Ind$
\begin{equation}
    \label{def:wnzn:limsup_spins}
    \tilde{w}_{n, i, j}\coloneqq \begin{cases}
        w(i\eps_n, j\eps_n) \quad & \textup{ if } i \leq \lfloor 1/\eps_n \rfloor -1, \\
        w(1, j\eps_n) &\textup{ if } i = \lfloor 1/\eps_n\rfloor 
    \end{cases} \quad \textup{ and } \quad \tilde{z}_{n, i, j} \coloneqq \begin{cases}
        z(i\eps, j\eps_n) \quad & \textup{ if } j \leq \lfloor 1/\eps_n\rfloor -1 ,\\
       z(i\eps_n, 1) \quad & \textup{ if } j = \lfloor 1/\eps_n\rfloor.
    \end{cases}
\end{equation}

\quad \emph{\textbf{Step 2. Lifting $\bm{(\tilde{w}_n, \tilde{z}_n)} $ to a spin field $\bm{u_n \colon \Omega \cap \eps_n \Z^2 \rightarrow \mathbb{S}^1}$}.} First, note that there exists $N \in \N$ such that for all $n\geq N$
\begin{equation}
    \label{limsup:boundedness}
    (\delta_n^{ver})^{1/2} \max\{ \Vert z\Vert_{L^\infty}, \Vert w\Vert_{L^{\infty}}\}\leq 1.
\end{equation} 
We set $\phi_n^{hor}: \Omega \cap \eps_n \Z^2 \to (-\pi/2,\pi/2)$ as 
\begin{equation}
    \label{def:preliminary_angle_velocity}
    \phi_n^{hor} \coloneqq \begin{cases}
        2 \arcsin\left( \sqrt{\frac{\delta_n^{ver}}{2}}\tilde{w}_n\right)\quad & \textup{ if } n\geq N,\\
        0 & \textup{ else}.
    \end{cases} 
    \end{equation}
We define the spin field $u_n \colon \Omega \cap \eps_n \Z^2 \rightarrow \mathbb{S}^1$ for $(i,j) \in \Ind$ as 
\begin{equation}
    \label{def:spin_field:limsup}
    u_{n, i,j} \coloneqq \begin{cases}(0,1)^T & \textup{ if } i= 0, \\
    \Rot\left( \sum_{k=0}^{i-1} \phi_{n, i, j}^{hor}\right) (0,1)^T \quad & \textup{ else}. \end{cases}
\end{equation}

\quad \emph{\textbf{Step 3. Determining the vertical angular velocity $\bm{\ver_n \colon \Omega \cap\eps_n \Z^2 \rightarrow [-\pi,\pi)}$ of $\bm{u_n.}$}} Let us now define
\begin{equation}
    \phi^{ver}_{n,i,j} \coloneqq \begin{cases} \sum_{k=0}^{i-1}(\phi^{hor}_{n, k, j+1}- \phi^{hor}_{n, k, j})\quad & \textup{ if } i\geq 1, \\
    0 & \textup{ else},
    \end{cases}\label{rec:def:vertical_vel}
\end{equation}
which satisfies \[u_{n, i, j+1} = \Rot( \phi^{ver}_{n, i, j}) u_{n,i,j}\quad \textup{ for all }(i,j) \in \INver.\]  Note that by definition, the angular vertical velocity fields $(\phi_n^{hor},\phi_n^{ver})$ is curl-free in a discrete sense. 
Moreover, the bound \eqref{limsup:boundedness} and $w \in C^{ \infty}(\overline{\Omega})$ imply 
\begin{equation}
    \begin{split}
        \vert  \phi^{ver}_{n,i,j} \vert & \leq  \sum_{k=0}^{i-1}\vert \phi^{hor}_{n, k, j+1}- \phi^{hor}_{n, k, j}\vert  = 2 \sum_{k=0}^{i-1}\left\vert \arcsin\left( \sqrt{\frac{\delta_n^{ver}}{2}}\tilde{w}_{n, k, j+1}\right)- \arcsin\left( \sqrt{\frac{\delta_n^{ver}}{2}}\tilde{w}_{n, k, j}\right)\right\vert \phantom{halloha}\\
        & \leq 2 (\delta_n^{ver})^{1/2} \sum_{k=0}^{i-1} \vert w(k\eps_n, (j+1)\eps_n) - w(k\eps_n, j\eps_n)\vert \leq 2 (\delta_n^{ver})^{1/2} \Vert \partial_2 w \Vert_{L^\infty},
    \end{split}\label{admissibility_vertical}
\end{equation}
where we used that $|\arcsin(s) - \arcsin(t)| \leq \sqrt{2} |t-s|$ for all $t,s \in (-2^{-1/2},2^{-1/2})$.
Since the right-hand side of \eqref{admissibility_vertical} vanishes as $n\rightarrow \infty$, there is $M\in \N$ such that $ \phi^{ver}_{n} \in (-\pi/2, \pi/2)$ for all $n \ge M.$ Then the field 
\[ (\hor_n, \ver_n) \coloneqq \begin{cases}
    (\phi_n^{hor}, \phi^{ver}_n) \quad & \textup{if } n\geq M,\\
    (0,0) & \textup{else}
\end{cases}\]
induces the spin field $u_n\colon \Omega \cap \eps_n\Z^2 \rightarrow \mathbb{S}^1 $ with $\curln((\hor_n, \ver_n)) = 0$ for $n\geq M.$ \\

\quad \emph{\textbf{Step 4. Definition of the recovery sequence $\bm{(w_n, z_n)}$.}} Define  $(\hat{w}_n, \hat{z}_n) \colon \Omega \cap\eps_n \Z^2\rightarrow \R^2$ by 
\begin{equation}
    \hat{w}_n \coloneqq \sqrt{\frac{2}{\delta_n^{ver}}} \sin\left(\frac{\hor_n}{2}\right) \quad \textup{ and } \quad \hat{z}_n \coloneqq \sqrt{\frac{2}{\delta_n^{ver}}} \sin\left(\frac{\ver_n}{2}\right),
\end{equation}
and set $(w_n, z_n) \coloneqq (\pc(\hat{w}_n), \pc(\hat{z}_n))\colon \Omega \rightarrow \R^2$  for $n\in \N$. Note, that $(w_n, z_n) \in \dom(\Hn)$ for all $n\in \N$. In the following, we shall identify $(w_n, z_n)$ with its discrete version $(\hat{w}_n, \hat{z}_n).$\\

\quad \emph{\textbf{Step 5. Strong convergence $\bm{(w_n, z_n) \rightarrow (w, z)}$ in $\bm{L^2(\Omega, {\rm I\!R}^2)}$ as $\bm{n\rightarrow \infty}$.}} We only prove the convergence $z_n \rightarrow z$ in $L^2(\Omega)$ as $n\rightarrow \infty$ since the convergence $w_n \to w$ in $L^2(\Omega)$ is actually easier to show due to the simpler form of $\phi_n^{hor}$. 
Let $(x,y) \in Q_{\eps_n}(i,j)$. We estimate 
\begin{equation}
    \begin{split}
        &\vert z_n(x,y) - z(x,y) \vert \leq \vert z_{n, i, j}- z(i\eps_n,j\eps_n) \vert + \vert z(i\eps_n, j\eps_n) - z(x,y )\vert.
    \end{split}\label{stron_conv_rec_first}
\end{equation}
The second term on the right-hand side of \eqref{stron_conv_rec_first} is bounded by $\sqrt{2}\eps_n \Vert \nabla z \Vert_{L^\infty}$. 
For the first term of \eqref{stron_conv_rec_first}, using Lipschitz continuity of the $\sin$-function, we obtain
\begin{equation}
    \begin{split}
       &\left\vert  z_{n, i, j} - z(i\eps_n, j\eps_n) \right\vert \leq  \sqrt{\frac{2}{\delta_n^{ver}}} \left\vert \frac12 \left(\sum_{k=0}^{i-1}\left( \phi^{hor}_{n, k, j+1}-\phi^{hor}_{n, k, j}\right)\right) - \arcsin\left(\sqrt{\frac{\delta_{n}^{ver}}{2}} z(i\eps_n, j\eps_n)\right)\right\vert \\
       & \leq   \sqrt{\frac{2}{\delta_n^{ver}}} \sum_{k=0}^{i-1} \left\vert  \arcsin\left( \Psi^w_{n, k, j+1}\right)- \arcsin\left( \Psi^w_{n, k, j}\right)- \left(\arcsin\left(\Psi^z_{n, k+1, j}\right) - \arcsin\left(\Psi^z_{n, k, j}\right)\right) \right\vert,
    \end{split}
\end{equation}
where \[\Psi^w_{n, i, j} \coloneqq \sqrt{\frac{\delta_n^{ver}}{2}}w(i\eps_n, j\eps_n) \quad \textup{ and } \quad \Psi^z_{n, i, j} \coloneqq \sqrt{\frac{\delta_{n}^{ver}}{2}} z(i\eps_n, j\eps_n) \quad \textup{ on } \Omega \cap \eps_n \Z^2. \]
For a function $g\in C^{1}([l\eps_n,(l+1)\eps_n];(-1,1))$ and $l\in \Z$, it holds
\begin{equation}
    \begin{split}
        &\arcsin(g((l+1)\eps_n)) - \arcsin(g(l\eps_n)) =\int_{l\eps_n}^{(l+1)\eps_n}\frac{1}{\sqrt{1-g^2(s)}} g^\prime(s) \textup{ d}s.
    \end{split}
\end{equation}
Hence, 
\begin{equation}
    \begin{split}
       &  \sqrt{\frac{2}{\delta_n^{ver}}} \left(\arcsin\left( \Psi^w_{n, k, j+1}\right)- \arcsin\left( \Psi^w_{n, k, j}\right)- \left(\arcsin\left(\Psi^z_{n, k+1, j}\right) - \arcsin\left(\Psi^z_{n, k, j}\right)\right)\right)\\
       & = \int_{j\eps_n}^{(j+1)\eps_n} \frac{1}{\sqrt{1- \delta_n^{ver}w^2(k\eps_n, y)/2}} \partial_2 w(k\eps_n, y) \textup{ d}y - \int_{k\eps_n}^{(k+1)\eps_n} \frac{1}{\sqrt{1- \delta_n^{ver}z^2(x, j\eps_n)/2}} \partial_1 z(x, j\eps_n) \textup{ d}x\\
       & = \int_{j\eps_n}^{(j+1)\eps_n} \left(\frac{1}{\sqrt{1- \delta_n^{ver}w^2(k\eps_n, y)/2}} -1\right) \partial_2 w(k\eps_n, y)\textup{ d}y+ \int_{j\eps_n}^{(j+1)\eps_n} \partial_2 w(k\eps_n, y) \textup{ d}y\\
       & \phantom{ = } - \int_{k\eps_n}^{(k+1)\eps_n}\left( \frac{1}{\sqrt{1- \delta_n^{ver}z^2(x, j\eps_n)/2}} -1\right)\partial_1 z(x, j\eps_n) \textup{ d}x  - \int_{k\eps_n}^{(k+1)\eps_n}  \partial_1 z(x, j\eps_n) \textup{ d}x.
    \end{split}\label{limsup_equality_arcsin}
\end{equation}
The first and third term of the right hand-side can each be bounded by \[\eps_n \left\vert \frac{1}{\sqrt{1- \delta_n^{ver}\max\{\Vert w \Vert_{L^\infty}, \Vert z\Vert_{L^\infty}\}^2/2}} -1\right\vert \max\{ \Vert \partial_2 w\Vert_{L^\infty}, \Vert \partial_1 z \Vert_{L^\infty}\}:= \eps_n h_n.\]
From the boundedness of $w$, $z$, $\partial_2 w$ and $\partial_1 z$ it follows that $h_n \rightarrow 0$ as $n\rightarrow \infty.$ Moreover, $\textup{curl}((w, z)) = 0$ implies $\partial_2 w= \partial_1 z$ and thus 
\begin{equation}\label{limsup:second_termin}
    \begin{split}
      &\left\vert   \int_{j\eps_n}^{(j+1)\eps_n} \partial_2 w(k\eps_n, y) \textup{ d}y - \int_{k\eps_n}^{(k+1)\eps_n}  \partial_1 z(x, j\eps_n) \textup{ d}x\right\vert\\
     \leq & \frac{1}{\eps_n} \int_{k\eps_n}^{(k+1)\eps_n}\int_{j\eps_n}^{(j+1)\eps_n} \left \vert \partial_1 z (k\eps_n, y) - \partial_1 z(x, j\eps_n) \right\vert \textup{ d}y\textup{ d}x \\ \leq &\eps_n^2 (\Vert \partial_{12}z \Vert_{L^\infty} + \Vert \partial_{11} z\Vert_{L^\infty}).
    \end{split}
\end{equation}
Combining the estimates above we find that 
\begin{equation}    \begin{split}
     \vert z_{n,i,j} - z(i\varepsilon,j \varepsilon) \vert  \leq 2 h_n + 2 \eps_n \left( \Vert \partial_{12}z \Vert_{L^\infty} + \Vert \partial_{11} z\Vert_{L^\infty} \right) \to 0.
    \end{split}
\end{equation}
Thus, by \eqref{stron_conv_rec_first} we have $(w_n, z_n) \rightarrow (w, z)$ in $L^\infty(\Omega)$ as $n\rightarrow\infty$, which implies strong convergence in $L^2(\Omega;\R^2)$. 
Similarly, one can show that
\begin{equation}
    (\partial_1^{d, \eps_n} w_n, \partial_2^{d, \eps_n} z_n) \rightarrow (\partial_1 w, \partial_2 z) \quad \textup{ in } L^2(\Omega; \R^2) \quad \textup{ as } n\rightarrow \infty.
\end{equation}

\quad \emph{\textbf{Step 6. Conclusion.}} Using the strong convergences
\[ (w_n, z_n) \rightarrow (w, z) \textup{ in } L^\infty(\Omega; \R^2)\quad  \textup{ and }\quad  (\partial_1^{d, \eps_n} w_n, \partial_2^{d, \eps_n} z_n) \rightarrow (\partial_1 w, \partial_2 z) \textup{ in } L^2(\Omega; \R^2) \text{\ as $n\rightarrow \infty$,}\]  the boundedness of $ ((w_n, z_n))_n$ in $L^\infty(\Omega)$ and the boundedness of $ ((\partial_1^{d, \eps_n}w_n, \partial_2^{d, \eps_n}w_n))_n$ in $L^\infty(\Omega; \R^2)$, the estimate \eqref{limsup:spins:inequality} follows.
\end{proof}

We now prove the existence of recovery sequences for general admissible functions.

\begin{proposition}[Existence of a recovery sequence and limsup inequality] \label{gamma:spins:upper}
 Let $\eps_n\in (0, 1/2)$ and $ \Delta_n = (\delta_n^{hor}, \delta_n^{ver})$ satisfy 
\begin{equation}
\quad \eps_n \rightarrow 0, \quad  \Delta_n = (\dhor_n, \dver_n) \rightarrow (0,0), \quad \frac{\eps_n}{\sqrt{2\dver_n}} \rightarrow \sigma \in (0,\infty), \quad \text{ and }\quad \gamma_n^2=\frac{\dhor_n}{\dver_n} \rightarrow \gamma^2 \in [0,\infty). 
\end{equation}
Then for every $(w, z) \in L^2(\Omega,\R^2)$ there is a sequence $(w_n, z_n) \in L^2(\Omega; \R^2)$ such that $(w_n, z_n)\rightarrow (w, z)$ in $L^2(\Omega;\R^2)$ as $n\rightarrow \infty$ and the limsup inequality holds
\[ \limsup\limits_{n\rightarrow \infty} \Hn(w_n, z_n) \leq \H(w, z).\] 
\end{proposition}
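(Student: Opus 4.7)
The plan is the standard density-plus-diagonalization reduction to the smooth-target case already handled in Lemma \ref{lem:recovery-sequence_smoot}. If $\H(w,z) = +\infty$, the constant sequence $(w_n, z_n) := (w,z)$ trivially satisfies both conclusions, so from now on I assume $(w,z) \in \dom(\H)$.

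The first and main step is to approximate a general $(w,z) \in \dom(\H)$ by smooth admissible fields. Using the curl-free condition together with $z(0,\cdot) = 0$, I would write $(w,z) = \nabla u$ with $u \in W^{2,2}(\Omega)$ and $u(0,\cdot) \equiv 0$; the remaining defining conditions of $\dom(\H)$ then become the trace identities $|\partial_1 u(0,\cdot)| = |\partial_1 u(1,\cdot)|$ and $|\partial_2 u(\cdot,0)| = |\partial_2 u(\cdot,1)|$. I would construct $u_k \in C^\infty(\overline{\Omega})$ with $\nabla u_k \in \dom(\H)$ such that $\nabla u_k \to \nabla u$ strongly in $W^{1,2}(\Omega; \R^2)$. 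Since $W^{1,2}(\Omega) \hookrightarrow L^p(\Omega)$ for every $p < \infty$ in two dimensions, this yields $L^4$-convergence of $(w_k, z_k) := \nabla u_k$ to $(w,z)$, which combined with the $W^{1,2}$-convergence of the gradients suffices to pass to the limit in both the potential and the gradient parts of the functional, giving $\H(w_k, z_k) \to \H(w,z)$.

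Once the density is available, the proof is finished by routine diagonalization. For each $k$, Lemma \ref{lem:recovery-sequence_smoot} delivers a sequence $(w_n^{(k)}, z_n^{(k)})_n$ converging to $(w_k, z_k)$ in $L^2$ with $\limsup_n \Hn(w_n^{(k)}, z_n^{(k)}) \leq \H(w_k, z_k)$. Attouch's lemma then produces indices $k(n) \to \infty$ such that the diagonal sequence $(w_n, z_n) := (w_n^{(k(n))}, z_n^{(k(n))})$ converges to $(w,z)$ in $L^2(\Omega; \R^2)$ and satisfies $\limsup_n \Hn(w_n, z_n) \leq \H(w,z)$.

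The hard part is the density step, which requires preserving three different boundary conditions simultaneously under an approximation by smooth functions. The delicate feature is the absolute-value periodicity, which allows sign flips of the boundary traces across connected components of the respective edges and hence obstructs a direct periodic-extension-plus-mollification. The plan is to use the $W^{2,2}$-regularity of the potential $u$ to show that its boundary traces have locally constant sign matching, split the boundary into pieces on which either a periodic or an antiperiodic identification holds, and mollify after a sign-compatible extension of $u$ to a neighborhood of $\overline{\Omega}$; a final small correction supported near $\{x_1 = 0\}$ restores the exact identity $\partial_2 u_k(0, \cdot) \equiv 0$.
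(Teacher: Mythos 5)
Your proposal follows essentially the same route as the paper: split into the trivial case $\H(w,z)=+\infty$ and the case $(w,z)\in\dom(\H)$, approximate the latter by smooth admissible fields, invoke Lemma \ref{lem:recovery-sequence_smoot} for the smooth case, and conclude by a diagonal argument. The only difference is that you elaborate on (and correctly flag the subtlety of) the density step — the paper dismisses it with a one-line ``by mollification'' while you note that the absolute-value periodicity conditions in $\dom(\H)$ are not automatically preserved by a naive reflective or periodic extension and sketch a sign-compatible fix.
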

\begin{proof}
Let $(w, z)\in L^2(\Omega;\R^2)$. There are two cases.  \\

\quad \emph{\textbf{Case A: Assume $\bm{\H(w, z) = \infty}$.}} We simply set $(w_n, z_n)\coloneqq (w, z) $ for all $n\in \N.$ Then $(w_n, z_n) \rightarrow (w, z)$ in $L^2(\Omega;\R^2)$ as $n\rightarrow \infty$ and  
\[\limsup\limits_{n\rightarrow \infty} \Hn(w_n, z_n)\leq \infty = H_{\sigma, \gamma}(w, z). \]

\quad \emph{\textbf{Case B: Assume $\bm{(w, z) \in \dom(\H).}$}} 
By Lemma \ref{lem:recovery-sequence_smoot} there is a recovery sequence for every $(w,z) \in D\coloneqq \dom(H_{\sigma,\gamma}^{per})\cap C^\infty(\overline{\Omega})$. By mollification, for every $(w,z) \in \dom(\H)$ there exists a sequence $(w_n,z_n) \in \dom(\H) \cap C^{\infty}(\overline{\Omega};\R^2)$ such that $(w_n,z_n) \to (w,z)$ in $W^{1,2}(\Omega;\R^2)$. Then the existence of a recovery sequence follows from Lemma \ref{lem:recovery-sequence_smoot} and a diagonal argument.
\end{proof}

\section{Proof of the scaling law}\label{sec:scaling}
In this section, we prove Theorem \ref{thm:scaling}. 
 Hence, from now on we will assume that $\delta = \dver = \dhor \in (0,1)$.

\subsection{Upper Bound}
In this section, we prove the upper bounds of Theorem \ref{thm:scaling}.  We refer to Figure \ref{fig:optimal_profiles} for an illustration of the admissible curl-free interfaces between optimal profiles. The latter also play a crucial role in the continuum setting, see \cite{B11-2019-CFO,GiZw22,GiZw23_Martensites}.

\label{sec:upper}
\begin{proposition}[Upper bound] There is a constant $c_u > 0$ such that for all $(\eps,\delta)\in (0,1/2)\times (0,1)$ there holds\label{lem:upper}
\begin{equation*}
    \min \left\{\Eiso(u,\Omega) \colon u\in \Sbound\right\} \leq c_u\min\left\{ \delta^2, \eps\delta^{3/2} \left( \left\vert \ln \left( \frac{\eps}{\delta^{1/2}}\right)\right\vert + 1 \right), \eps\delta^{1/2}\right\}.
\end{equation*}
\end{proposition}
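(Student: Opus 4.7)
I would prove the bound by exhibiting three separate competitors $u_F, u_B, u_V \in \Sbound$, each realizing one of the three terms in the minimum; the proposition then follows from the trivial inequality $\min\{\Eiso(u_F,\Omega),\Eiso(u_B,\Omega),\Eiso(u_V,\Omega)\}$. Each construction relies only on the reformulated energy \eqref{id:energy_hor} and its vertical analogue, together with the properties of $p$ and $q$ from Section \ref{sec:preliminaries}.

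For the ferromagnetic bound $\delta^2$, take $u_F \equiv (0,1)^T$: all angular velocities vanish, the interfacial term $q$ is identically zero, and the only contribution is the well-term $(1-\delta-\cos 0)^2 = \delta^2$ per cell; summing over $O(\eps^{-2})$ cells with prefactor $\eps^2$ gives $\Eiso(u_F,\Omega)\leq C\delta^2$. For the branching bound $\eps\delta^{3/2}(|\ln(\eps/\delta^{1/2})|+1)$, I would discretize the self-similar construction from \cite{GiZw22}: in the bulk place the optimal helical profile $(\hor,\ver)\equiv(\opt^{hor},\opt^{ver})$, for which all terms in \eqref{id:energy_hor} vanish to leading order, and near $x=0$ insert dyadically refining horizontal strips in which $\ver$ alternates between $+\opt^{ver}$ and $-\opt^{ver}$, since (as in Figure \ref{fig:optimal_profiles}) this is the only chirality change compatible with curl-freeness across a horizontal interface. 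Each transition is spread over $\sim\eps/\delta^{1/2}$ vertical lattice sites; balancing the $(1-\delta-\cos\ver)^2$ cost against the $p\,|\partial_2^{d,\eps}z^{\Delta}|^2$ cost yields $\sim\eps\delta^{3/2}$ per unit length of interface. Summing over $K\sim|\ln(\eps/\delta^{1/2})|+1$ refinement levels with total interface length $O(1)$ per level produces the claimed bound.

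For the vortex bound $\eps\delta^{1/2}$, I would place a one-dimensional array of vortex cores near the left boundary, spaced $\sim \eps/\delta^{1/2}$ apart vertically (cf.~Figure \ref{fig:vortex}). Outside tubes of radius $O(\eps)$ around each core, use a fixed optimal helical profile; inside each core the spins are designed so as to carry winding $\pm 2\pi$ on one $\eps$-cell while matching the exterior helical profile at the tube boundary. By the reformulation of the energy together with Lemma \ref{lem:vortices-estimates}, and since only $O(1)$ angles per core are of size $\Theta(1)$ while all others remain close to $\opt^{hor/ver}$, each vortex contributes energy $O(\eps^2)$; with $\sim \delta^{1/2}/\eps$ vortices the total cost is $O(\eps\delta^{1/2})$, plus a negligible bulk well-cost.

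\textbf{Main obstacle.} The branching and vortex designs are the delicate parts. For the branching construction, one must fit the dyadic strips and their transition layers cleanly onto the $\eps$-lattice while maintaining both the discrete curl condition \eqref{intr:curl_cond} and the ferromagnetic boundary datum, and carefully track how the per-interface cost degrades at the finest scales where the strip width approaches $\eps/\delta^{1/2}$. The vortex construction is the genuine novelty: one must explicitly construct a discrete vortex core of winding $2\pi$ whose energy is only $O(\eps^2)$ (rather than the much larger cost a naive $1/r$ core would incur) and glue it to the surrounding helical profile so that the discrete curl vanishes outside each core and no spurious high-energy cells are created at the seams, drawing on the analogy with dislocations at semicoherent interfaces in \cite{AbGinZwi}.
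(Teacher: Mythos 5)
Your overall strategy matches the paper exactly: three separate competitors, one per term of $s(\eps,\delta)$. The ferromagnetic competitor is the paper's $u^{(1)}$, and your branching sketch (dyadic refinement to depth $N\sim|\ln(\eps/\delta^{1/2})|$, transitions spread over $\sim\eps/\delta^{1/2}$ lattice spacings, cost $\sim\eps\delta^{3/2}$ per unit interface length) captures Steps 1--6 of the paper's construction; the paper implements the transitions by mollifying a piecewise-constant curl-free field with a $\lambda=\eps/(2\delta^{1/2})$ kernel, and gets the boundary datum by forcing $\Psi^{ver}_N=0$ on $\{x\leq 0\}$ and shifting the mollifier, but that is a detail you could supply along the lines you indicate.

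The genuine gap is in your vortex construction, and it is conceptual rather than merely technical. You propose a uniformly optimal helical profile $(\hor,\ver)\equiv(\opt,\opt)$ outside $O(\eps)$-tubes around the cores, with each core carrying winding $2\pi$ on a single cell. This cannot be made consistent with either the boundary datum or the discrete curl constraint. First, a helical profile between the tubes on $\{x=0\}$ has $\ver\equiv\opt\neq 0$, so $u(0,\cdot)\neq(0,1)^T$ and your competitor lies outside $\Sbound$. Second, and more fundamentally, if $(\hor,\ver)\equiv(\opt,\opt)$ on the boundary of any large lattice square, the discrete Green's theorem gives $\sum_{\text{plaquettes}}\eps\curl((\hor,\ver))=0$, contradicting the presence of a net winding $2\pi$ inside. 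Isolated vortices in a uniform helical background are impossible. The vortices in the paper exist precisely \emph{because} of the mismatch between the ferromagnetic left edge ($\ver=0$) and the helical bulk ($\ver=-\opt$); that mismatch forces the left boundary layer of width $M\eps\sim\eps\delta^{-1/2}$ to carry winding $\sim\opt/\eps\cdot 2\pi$, and the paper distributes this as one $2\pi$-vortex per height $2M\eps$. Accordingly, the paper's building block is not a small tube glued to a uniform helix: it is a $[0,M\eps]\times[0,2M\eps]$ patch with $\Psi^{ver}_{0,\cdot}=0$, a tapered profile $\sim\pi/j$ near the vortex cell, and helical values only along the right edge $x=M\eps$. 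The per-block energy $O(\eps^2+M^2\eps^2\delta^2+M\eps^2\delta)$ comes from the entire $M\eps\times M\eps$ taper, not from a single-cell core, and the $O(\eps^2)$ figure you quote is correct for the paper's block but not for your design. Your ``main obstacle'' paragraph flags the core-energy and gluing issues but not this boundary/Stokes incompatibility, which is what dictates the shape and scale of the construction; once you build the boundary-compatible layer you essentially recover the paper's proof.
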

\begin{proof}
We provide separate constructions for the parameter regimes spelled out in Remark \ref{rem:scaling_function_regimes}.\\

\quad \textbf{Ferromagnet. } Let $u^{(1)} \colon \Omega\cap\eps\Z^2 \rightarrow \mathbb{S}^1$ be given by $u^{(1)}_{i,j} = (0,1)^T$ for all $(i,j)\in \Ind$, see Fig.~\ref{fig:ferromagnet}. Then 
\begin{eqnarray}\label{eq:energy1}
\Eiso(u^{(1)}, \Omega) \leq 4\delta^2.
\end{eqnarray}

\quad \textbf{Branching-type construction.}
We construct a spin field $u^{(2)}$ satisfying the energy estimate $\Eiso(u^{(2)}, \Omega) \leq C_2 \delta^{3/2} \eps ( |\ln ( \eps / \delta^{1/2})| + 1 )$. 
  First note, that it holds $\delta^2 \leq 2\delta^{3/2} \eps ( |\ln ( \eps / \delta^{1/2})| + 1 )$ if $\delta^{1/2} \leq 2 \eps$ . 
  Hence, in light of the ferromagnetic competitor above it is enough to consider the case $2\eps \leq \delta^{1/2}$.
  
  Let $N\in \N, N\geq 1$, with $2^{-N} \leq \frac{\eps}{\delta^{1/2}}$ and  $\lambda \in (0,2^{-N})$ be fixed. 
  We will construct a spin field $u^{(2)}_{N,\lambda} \colon \Omega \cap\eps\Z^2 \rightarrow \mathbb{S}^1$ with associated angular velocity field $(\hor_{N,\lambda}, \ver_{N, \lambda}) \colon \Omega \cap \eps \Z^2 \rightarrow [-\pi, \pi)^2$ that satisfies the energy estimate
  \begin{equation}
      \label{ineq:energy_constr_A}
      \Eiso (u^{(2)}_{N,\lambda}, \Omega ) \leq C \left( N \delta^2 \lambda + \delta^2 2^{-N} + \frac{\eps^2 \delta}{\lambda} N + \frac{\eps^2 \delta}{\lambda^2} 2^{-N} \right).
  \end{equation}
    Setting 
  \begin{equation*}
     N \coloneqq \left\lceil\left\vert  \frac{\ln \left( \frac{\eps}{\delta^{1/2}}\right)}{\ln 2}\right\vert \right\rceil  \quad  \textup{   and   }\quad  \lambda \coloneqq \frac{\eps}{2\delta^{1/2}}\in (0,2^{-N})\subseteq(0,1),
  \end{equation*}
  yields the energy estimate 
\begin{equation}
    \label{eq:energy2} 
     \Eiso (u^{(2)}_{N,\lambda}, \Omega ) \leq C_2 \eps \delta^{3/2}\left( \left\vert \ln \left(\frac{\eps}{\delta^{1/2}}\right) \right\vert + 1 \right)
\end{equation}
with $C_2 := 8 C / \ln(2) $.\\
For that, we construct a curl-free angular velocity field $(\theta_{N,\lambda}^{hor}, \theta_{N,\lambda}^{ver})$ that satisfies the boundary conditions
  $\theta_{N,\lambda}^{ver} = 0$ on $(\{0\}\times (0,1))\cap\eps \Z^2$, which then determines the spin field. We use a self-similar branching-type construction (see Figure \ref{fig:branching_construction}) which is motivated by a construction from \cite{GiZw22} for a related sharp-interface continuum model. \\
  
  \begin{figure}[h]
      \centering
      \includegraphics[scale = 0.35]{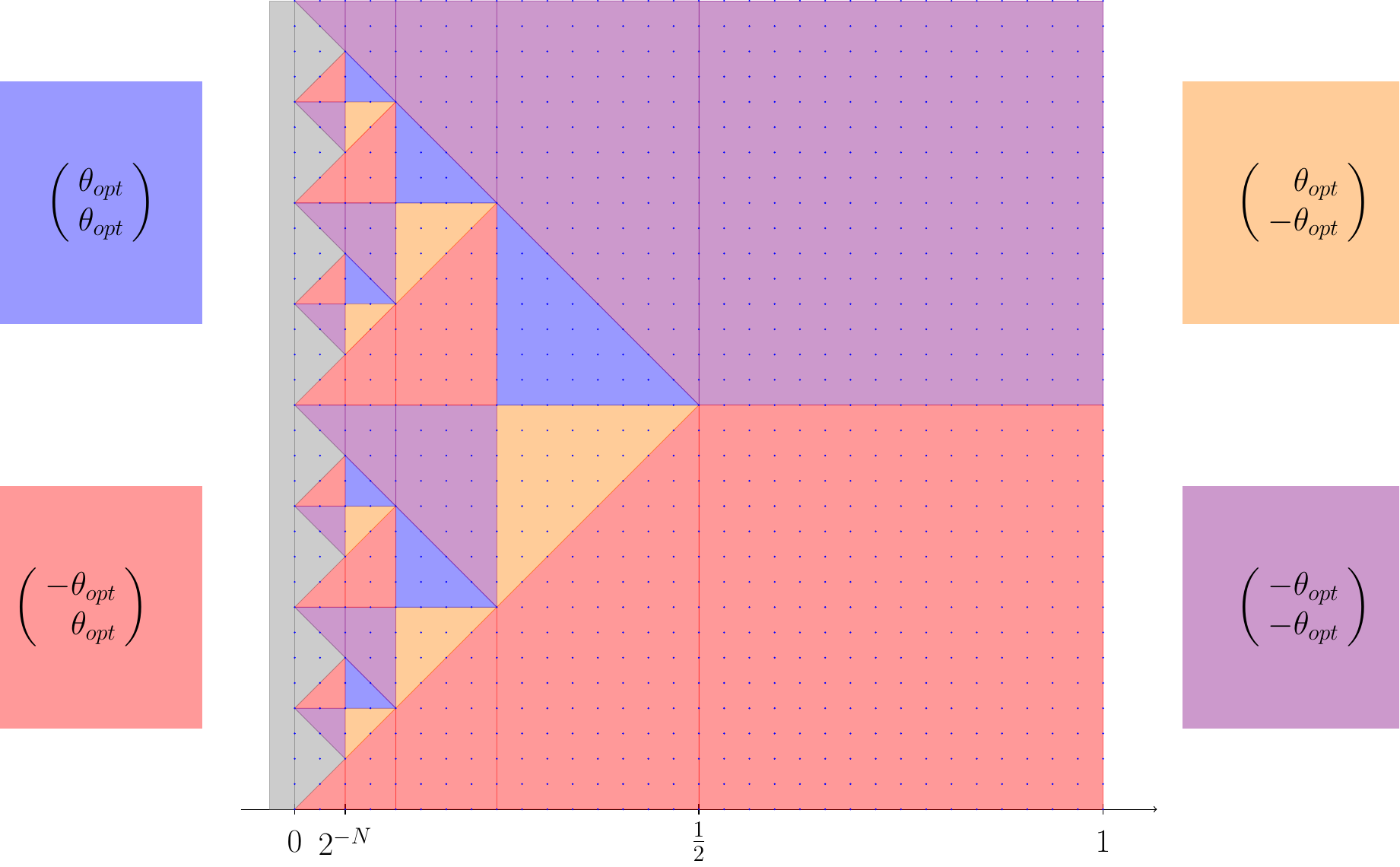}
      \caption{Sketch of the construction for $\eps = 1/32$ and $N = 3$. The different colored domains mark the subdomains in which we turn with one of the four optimal angular velocity fields (recall Figure \ref{fig:optimal_profiles}).}
      \label{fig:branching_construction}
  \end{figure}
 
  \quad \emph{\textbf{Step 1. Definition of the building block.}}
  Let $\opt \coloneqq \arccos(1-\delta)$. We introduce the angular velocity field $(\Psi^{hor}, \Psi^{ver}) \colon [1/4,1/2]\times \R \rightarrow [-\pi, \pi)^2$ given by
  \begin{equation}
      \label{def:bulding_block}
    \Psi^{hor}(x, y) = \begin{cases}
    -\theta_{opt} \quad & \textup{ if } y\leq x, \\
    \phantom{-}\theta_{opt} & \textup{ if } x\leq y \leq \frac12,\\
    \phantom{-}\opt & \textup{ if } \frac12\leq  y \leq 1 - x,\\
    -\opt & \textup{ if } \frac12 \leq 1 - x \leq y,
    \end{cases}\quad  \textup{ and }\quad
        \Psi^{ver}(x, y) = \begin{cases}
    \phantom{-}\theta_{opt} \quad & \textup{ if } y\leq x, \\
    -\theta_{opt} & \textup{ if } x\leq y \leq \frac12,\\
    \phantom{-}\opt & \textup{ if } \frac12\leq  y \leq 1 - x,\\
    -\opt & \textup{ if } \frac12 \leq 1 - x \leq y,
    \end{cases}
  \end{equation}
  for $(x,y) \in [1/4,1/2]\times [0,1]$ and extend $(\Psi^{hor}, \Psi^{ver})$ one-periodically with respect to the $y$-variable to $[1/4,1/2]\times \R$.
  Note that $(\psi^{hor},\psi^{ver})$ is curl-free.
  Hence, for $\eps \in \{ 2^{-n} \colon n \geq N\}$ we have \[\curl((\Psi^{hor}, \Psi^{ver}))= 0 \quad \textup{ on } ([1/4,1/2] \times \R) \cap \eps\Z^2,\]
  where we identify $(\psi^{hor},\psi^{ver})$ with a function on $([1/4, 1/2]\times \R) \cap \eps \Z^2$ through restriction.\\
  
  \quad \emph{\textbf{Step 2. Refinement on $\bm{[2^{-N}, \infty) \times {\rm I\!R}}$.}} First, let $\eps \in \{2^{-n} \colon n\geq N\}$. We define the angular velocity field $(\tilde{\Psi}_N^{hor}, \tilde{\Psi}^{ver}_N) \colon [2^{-N}, \infty)\times \R \rightarrow [-\pi, \pi)^2$ by 
  \begin{equation}
      \nonumber 
      \tilde{\Psi}^{hor}_N (x,y) \coloneqq \Psi^{hor}(2^k x, 2^k y) \quad \textup{ and } \quad \tilde{\Psi}^{ver}_N (x, y) \coloneqq \Psi^{ver}(2^k x, 2^k y) 
  \end{equation}
  for $(x, y) \in [2^{-k-2}, 2^{-k-1}) \times \R $ and $k \in \{0, \dots, N-2\}$. 
  Furthermore, we set 
  \begin{equation}
      \tilde{\Psi}_N^{hor}(x,y) \coloneqq   \Psi^{hor}(1/2, y) \quad \textup{ and } \quad    \tilde{\Psi}_N^{ver}(x,y) \coloneqq  \Psi^{ver}(1/2, y) \quad \textup{ for } \frac12 \leq x. 
  \end{equation}
This extension to $[2^{-N}, \infty)\times \R$ satisfies $\eps \textup{curl}^{d,\eps} ((\tilde{\Psi}_N^{hor}, \tilde{\Psi}_N^{ver}))= 0 $ on $([2^{-N}, \infty)\times \R)\cap \eps \Z^2$.\\
  \begin{figure}[h]
      \centering
      \includegraphics[width =0.95\linewidth]{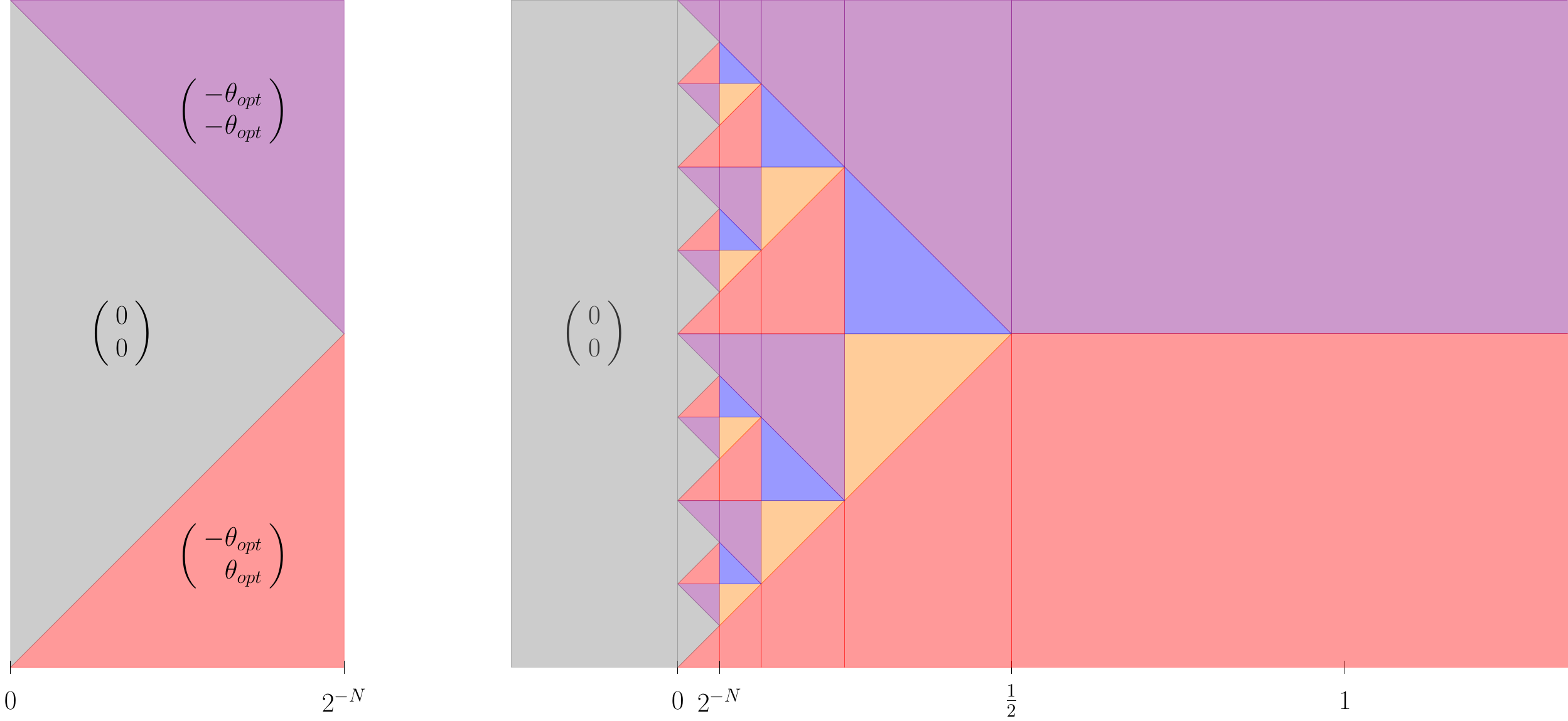}
      \caption{Left: The angular velocity field $(\Psi^{hor}, \Psi^{ver})$ as constructed close to the boundary. Right: Sketch of the extension $(\Psi^{hor}_N, \Psi^{ver}_N)$ of the construction for $N= 3.$ }
      \label{fig:branching_continuous} 
  \end{figure}
  
For arbitrary $\eps \in (0,1/2)$ there is $n\geq N$ such that $\eps\in [2^{-n}, 2^{-(n-1)})$. We define
\begin{equation}
    \Psi_N^{hor}\coloneqq \tilde{\Psi}_N^{hor}\left(\frac{2^{-n}x}{\eps}, \frac{2^{-n}y}{\eps} \right) \quad \textup{ and }\quad  \Psi_N^{ver}\coloneqq \tilde{\Psi}_N^{ver} \left(\frac{2^{-n}x}{\eps}, \frac{2^{-n}y}{\eps} \right)
\end{equation}
for $(x,y)\in [2^{n-N}\eps, \infty)\times \R.$ Note that this definition guarantees that the interfaces are aligned with the $\eps \Z^2$-lattice. Again, we have $\eps \curl (( \Psi_N^{hor}, \Psi^{ver}_N)) = 0$ on $ ([2^{n-N}\eps, \infty)\times \R )\cap \eps\Z^2.$ From now on, for simplicity of notation we assume $\eps \in \{2^{-n}\colon n\geq N\}$. \\

\quad \emph{\textbf{Step 3. Construction in the boundary layer. }}We set for $(x,y) \in [0, 2^{-N}] \times [0, 2\cdot 2^{-N}]$ 
  \begin{equation}
      \label{def:bulding_block_boundary}
    \Psi_N^{hor}(x, y) \coloneqq \begin{cases}
    -\theta_{opt} \quad & \textup{ if } 0\leq y\leq x\leq 2^{-N},\\
    -\theta_{opt} & \textup{ if } 0\leq 2\cdot 2^{-N} -x\leq y \leq 2\cdot 2^{-N}, \\
    \phantom{-}0 & \textup{ else},
    \end{cases} \nonumber 
  \end{equation}
  and 
  \begin{equation}
            \Psi_N^{ver}(x, y) \coloneqq \begin{cases}
    \phantom{-}\theta_{opt} \quad & \textup{ if }0\leq y\leq x\leq 2^{-N}, \\
       -\opt & \textup{ if } 0\leq 2\cdot 2^{-N} -x\leq y \leq 2\cdot 2^{-N},\\
    \phantom{-}0& \textup{ else}.
    \end{cases}\nonumber 
  \end{equation}
  Again, we extend $\Psi_N^{hor}$ and $\Psi_N^{ver}$ with respect to the $y$-variable $2\cdot 2^{-N}$-periodically, and eventually, set $(\Psi_N^{hor}, \Psi_N^{ver}) \coloneqq (0,0)$ on $(-\infty, 0] \times \R.$ Then again $\curl(\Psi_N^{hor}, \Psi_N^{ver})) = 0$. 
 Eventually, note that there is a universal constant $C>0$ such that the length of the jump set of $(\psi^{hor}_N,\psi^{ver}_N)$, $J_{(\psi^{hor}_N,\psi^{ver}_N)}$, is estimated by 
 \begin{equation}\label{eq: estimate surface psi}
 \mathcal{H}^1(J_{(\psi^{hor}_N,\psi^{ver}_N)} \cap (0,2) \times (-1,1)) \leq C N.
 \end{equation}
 We refer to Figure \ref{fig:branching_continuous} for a sketch of the described construction.\\
  
\quad \emph{\textbf{Step 4. Mollification.}} Let $(\Psi_N^{hor}, \Psi^{ver}_N) \colon \R^2 \rightarrow [-\pi, \pi)^2$ be the field from Step 2 and Step 3. Furthermore, let $p_\lambda\colon \R \rightarrow [0,\infty)]$ be a standard mollifier, i.e.~  
  \begin{equation}
     \label{prop:mollifier}
      p_\lambda \in C_c^\infty(\R;[0,\infty)), \quad \textup{supp}(p_\lambda) \subseteq (-\lambda, \lambda), \quad \int_\R p_\lambda \textup{ d}x = 1, \quad \text{ and } \quad \int_\R \vert p_\lambda^\prime\vert  \textup{ d}x \leq \frac6\lambda.
  \end{equation}
 We define the mollified angular velocity field $(\Psi_{N,\lambda}^{hor}, \Psi_{N, \lambda}^{ver})\colon \R^2 \rightarrow [-\pi, \pi)^2$ by (see Figure \ref{fig:smooth_spins} for a sketch)
  \begin{equation}
  \label{def:smoothed_branching}
  \begin{array}{rcl}
  \Psi_{N,\lambda}^{hor} (x,y) &\coloneqq &\int\limits_{\R^2} p_{\lambda}(s) p_\lambda(t) \Psi_N^{hor}(x-\lambda-s, y-t) \textup{ d}s \textup{d}t,\text{\quad and}\\
    \Psi_{N,\lambda}^{ver} (x,y) &\coloneqq & \int\limits_{\R^2} p_{\lambda}(s) p_\lambda(t) \Psi_N^{ver}(x-\lambda-s, y-t) \textup{ d}s \textup{d}t\text{\quad for $(x,y) \in \R^2$.}
  \end{array}
  \end{equation}
\begin{figure}[h]
    \centering
    \includegraphics[scale = 0.8]{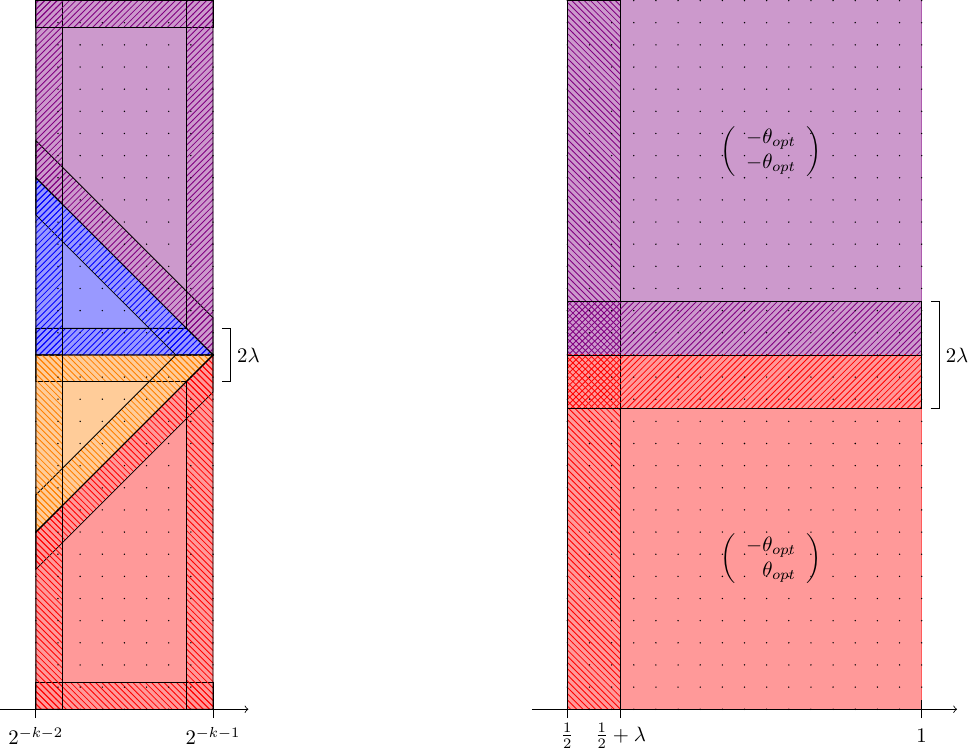}
    \caption{Illustration of the $(\Psi_{N,\lambda}^{hor}, \Psi_{N, \lambda}^{ver})$. The patterned subdomain indicates the set in which the angular velocities are not necessarily contained in $\{-\opt, \opt\}$.}
    \label{fig:smooth_spins}
\end{figure}

Since $\curl((\Psi_{N}^{hor}, \Psi_{N}^{ver})) = 0$ it follows 
  \begin{equation}
      \label{prop:curl_condition}
      \eps \curl((\Psi_{N,\lambda}^{hor}, \Psi_{N,\lambda}^{ver})) = 0 \quad \text{ on } \R^2 \cap \eps \Z^2, 
  \end{equation}
  where we identify the functions $(\Psi_{N,\lambda}^{hor}, \Psi_{N,\lambda}^{ver})$ with their restrictions to $\R^2 \cap \eps \Z^2$.
  Moreover, by construction $\Psi_N^{ver}= 0$ on $(-\infty, 0] \times \R$ and therefore also
  \[ \Psi_{N,\lambda}^{ver}(0, y) = \int_{R^2} \Psi^{ver}_N(0-\lambda-s, y-t) p_\lambda(s) p_\lambda(t) \textup{ d}s\textup{d}t = 0.\]\\
 
 \quad \emph{\textbf{Step 5. Definition of $\bm{(\theta_{N,\lambda}^{hor}, \theta_{N,\lambda}^{ver}) }$.}}
We set
     $(\theta_{N,\lambda}^{hor}, \theta_{N,\lambda}^{ver}) \coloneqq 
     (\Psi_{N,\lambda}^{hor}, \Psi_{N,\lambda}^{ver})$ on $\Omega \cap \eps \Z^2$.
By construction, $\theta_{N,\lambda}^{ver}(0, \cdot) = \Psi_{N,\lambda}^{ver}(0, \cdot)=0$ and $\eps \curl( (\theta_{N,\lambda}^{hor}, \theta_{N,\lambda}^{ver})) =0$ on $\Omega \cap \eps\Z^2$. Hence, there exists an associated spin field $u_{N,\lambda}^{(2)}\in \Sbound$. \\
 
\quad \emph{\textbf{Step 6. Estimating the energy of $\bm{u_{N,\lambda}^{(2)}}$.}} We present only the estimate for the horizontal contribution, the vertical contributions can be treated similarly. Since $\Psi_N^{hor}$ is piecewise constant by construction, the function $\Psi_N^{hor, \lambda}$ is also piecewise constant outside a $\lambda$-neighborhood of the interfaces and away from $(0,\lambda) \times (0,1)$. Also, outside these regions $\Psi_N^{hor, \lambda} \in \{\pm \opt\}$.
We define the set
\[
\mathcal{I}_\lambda \coloneqq \left\{ (i,j)\colon (\theta^{hor}_{N,\lambda})_{i, j} \notin \{\pm \opt\} \text{ or } (\theta^{hor}_{N,\lambda})_{i+1, j} \notin\{\pm \opt\} \text{ or } (\theta^{hor}_{N,\lambda})_{i, j} \neq (\theta^{hor}_{N,\lambda})_{i+1, j} \right\}.\]
By the reasoning above and \eqref{eq: estimate surface psi}, we have $\mathcal{H}^0(A_{\lambda}) \leq C\left( N \frac{\lambda}{\eps^2}  + 2^{-N} \frac{1}{\eps^2} \right)$. Moreover, the estimate $|\theta^{hor}_{N,\lambda}| \leq \theta^{opt} = \arccos(1-\delta) \leq 2 \delta^{1/2}$ holds. It follows by Lemma \ref{lem:p_convergence} that \[p\left((\theta^{hor}_{N,\lambda})_{ i+1, j}, (\theta^{hor}_{N,\lambda})_{i,j}\right)\leq 3/2 \quad \text{ for all } (i, j)\in \INNhor.\]
Eventually, note that by the properties of $\rho_{\lambda}$ (recall \eqref{prop:mollifier}) we have $\left\vert \theta^{hor}_{N,\lambda, i+1, j} - \theta^{hor}_{N,\lambda, i, j} \right\vert \leq 12 \eps \delta^{1/2} / \lambda$. 
Hence, we may estimate
\begin{equation}\begin{split}
 \Eisohor (u^{(2)}_{N,\lambda},\Omega) = & \eps^2\sum_{(i,j) \in \INhor} (1-\delta -\cos((\theta^{hor}_{N,\lambda})_{i,j}))^2 +\eps^2\sum_{(i+1,j) \in \INhor} (1-\delta -\cos((\theta^{hor}_{N,\lambda})_{i+1,j}))^2 \nonumber\\
& + 2\eps^2 \sum_{(i,j)\in \INNhor} p((\theta^{hor}_{N,\lambda})_{i,j}, (\theta^{hor}_{N,\lambda})_{i+1, j}) \left\vert \sin\left(\frac{(\theta^{hor}_{N,\lambda})_{i+1,j}}{2}\right)- \sin\left(\frac{(\theta^{hor}_{N,\lambda})_{i,j}}{2}\right)\right\vert^2 \\
\leq &2 \eps^2 \delta^2 \mathcal{H}^0(A_\lambda) + 3 \eps^2 \sum_{(i,j)\in \mathcal{I}_\lambda} \left\vert \theta^{hor}_{N,\lambda, i+1, j} - \theta^{hor}_{N,\lambda, i, j} \right\vert^2 \\
\leq &  C\mathcal{H}^0(A_\lambda) \left( 2 \eps^2 \delta^2 + \frac{\eps^4 \delta }{\lambda^2}  \right) \\
\leq& C \left( N \delta^2 \lambda + \delta^2 2^{-N} + \frac{\eps^2 \delta}{\lambda} N + \frac{\eps^2 \delta}{\lambda^2} 2^{-N} \right).
\end{split}\label{reformulation:angle-velocity}
\end{equation}
This concludes the proof of \eqref{ineq:energy_constr_A}, and hence of the upper bound \eqref{eq:energy2}. \\

\quad \textbf{Vortex structure.} We construct a competitor $u^{(3)}\colon \Omega \cap\eps\Z^2 \rightarrow \mathbb{S}^1$ by introducing the associated angular velocity field $(\hor_3, \ver_3) \colon \Omega \cap\eps\Z^2 \rightarrow [-\pi,\pi)^2$. Again, we start by defining a building block that we will then use to construct the global angular velocity field. \\

\quad \emph{\textbf{Step 1. Construction of the building block.}} Let $M\coloneqq \left\lfloor \pi/\opt\right\rfloor\geq 2$. We define the angular velocity field $(\Psi^{hor},\Psi^{ver})\colon ([0, M\eps] \times [0, 2M\eps]) \cap\eps\Z^2 \rightarrow [-\pi,\pi)^2$ by 
\begin{equation}
    \Psi^{hor}_{i,j} \coloneqq \begin{cases}
    \frac{\pi}{j}\quad & \textup{ if } 0\leq i < j\leq M, \\
      j\frac{\pi}{i(i+1)}\quad  & \textup{ if } 0\leq j \leq i, \\
     \opt & \textup{ if } M+1\leq j \leq M+i,\\
     \phantom{-}0 & \textup{ else,}
    \end{cases}
\end{equation}
and 
\begin{equation}
    \Psi^{ver}_{i,j} \coloneqq \begin{cases}
   - i\frac{\pi}{j (j+1)}\quad & \textup{ if } 0\leq i\leq j\leq M-1,  \\
     -i\left(\frac{\pi}{M} -\opt\right) \quad & \textup{ if } j= M,\\ 
     -\frac{\pi}{i}\quad  & \textup{ if } 0\leq j< i \textup{ and } 1 < i \\
     -\opt & \textup{ if } M+1\leq j \leq M+i,\\
     \phantom{-}0 & \textup{ else.}
    \end{cases}
\end{equation}
Furthermore, we extend $(\Psi^{hor}, \Psi^{ver})$ $2M\eps$-periodically in the vertical direction to $([0, M\eps] \times [0,1] )\cap\eps\Z^2$. The field $(\Psi^{hor}, \Psi^{ver})$ satisfies the discrete curl condition $\eps\curl (\Psi^{hor}, \Psi^{ver})_{i,j} = 2\pi\delta_{i1} \delta_{j1}$ on $([0, M\eps] \times [0, 1])\cap\eps\Z^2$, i.e.~we create one vortex per building block, see Figure \ref{fig:building_block_vortices} for a sketch. Let $u^{(3)} \colon ([0, M\eps]\times [0,1])\rightarrow \mathbb{S}^1$ be the associated spin field. \\ 
\begin{figure}[h]
    \centering
    \includegraphics[scale = 0.3]{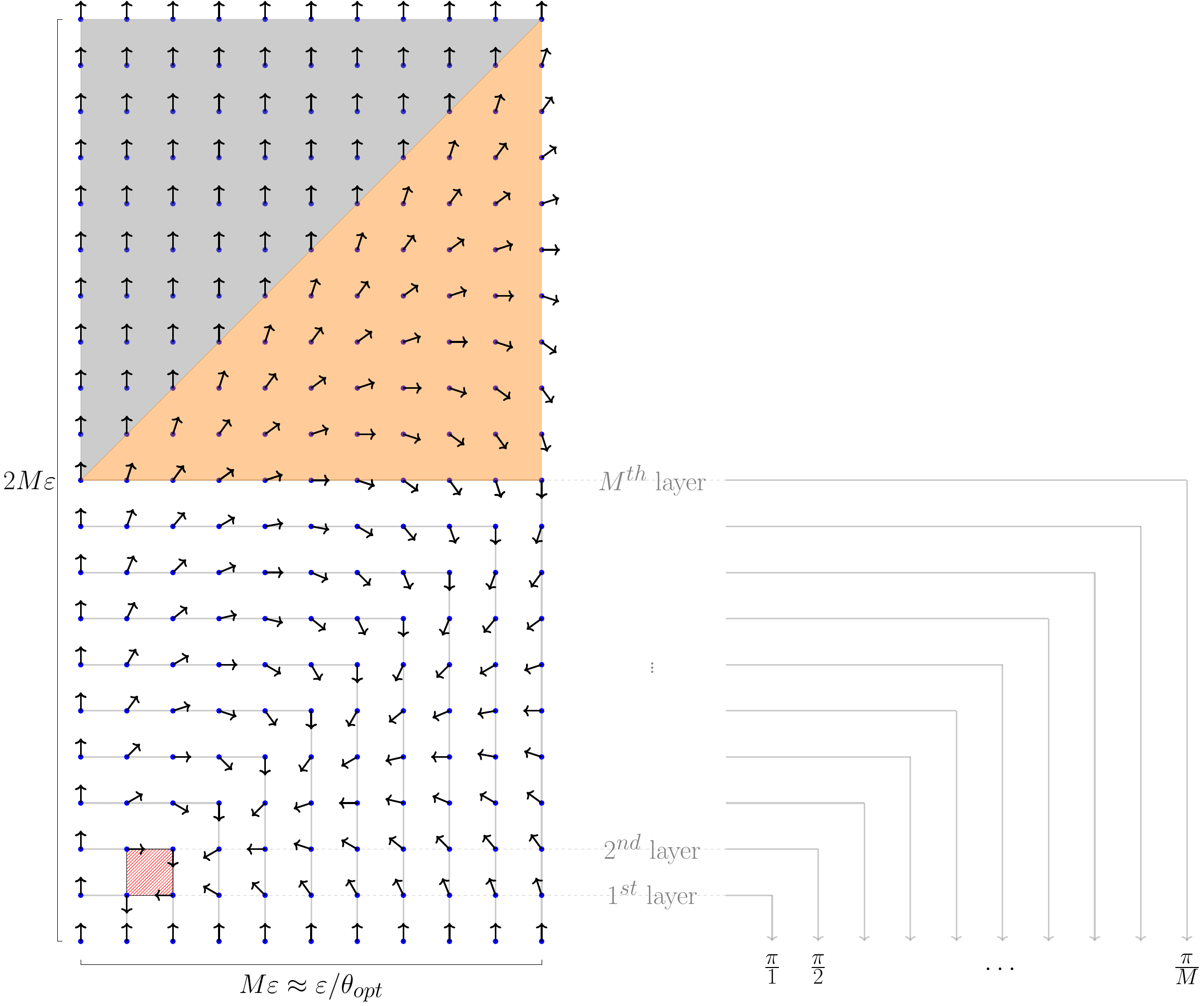}
    \caption{Left: Sketch of the competitor $u^{(3)} \colon ([0, M\eps] \times [0, 2M\eps] )\cap\eps\Z^2\rightarrow \mathbb{S}^1$ using $M=\lfloor \pi / \opt\rfloor$ vortices. The red $\eps$-cells mark cells containing a vortex. Right: Illustration of the angular velocity field of $u^{(3)}$ along an edge in $[0, M\eps]^2\cap\eps\Z^2$ with $\pi/(M+1)\leq \opt \leq \pi/M$. Along the $j$-th layer the associated spin field rotates with the angle $\pi/j$.} 
    \label{fig:building_block_vortices}
\end{figure}

\quad \emph{\textbf{Step 2. Definition of $\bm{(\hor_3, \ver_3).}$}} We define  
\begin{equation}
    (\hor_3, \ver_3) \coloneqq \begin{cases}
        (\Psi^{hor}, \Psi^{ver}) \quad & \textup{ on } [0, M\eps]\times [0,1],\\
        (\opt, -\opt) & \textup{ else.}
    \end{cases}
\end{equation}
The angular velocity field $(\hor_3, \ver_3)$ satisfies the discrete curl-condition and  $\ver_{0, \cdot} = 0$. Hence, there is a spin field $u^{(3)}\in \Sbound$ whose angular velocity field is $(\hor_3, \ver_3)$, see Figure \ref{fig:vortex} for a sketch of $u^{(3)}.$\\

\quad \emph{\textbf{Step 3. Estimating the energy of the building block.}} We start by computing the energy of $(\hor_3,\ver_3)$ on $[0, M\eps] \times [0, 2M\eps]$. Again, we use the reformulation of the energy \eqref{gamma:spins:energie_one} and \eqref{spin:gamma:reformulation_vertical}. \\

First, we compute the energy of $(\hor_3, \ver_3)$ on $[0, M\eps]\times [(M+1)\eps, 2M\eps]$. Subsequently, we estimate the energy on $[0, M\eps] \times [0, M\eps]$.  Let 
\[T^u \coloneqq \{ (i,j) \in \{0, \dots, M\} \times \{M+1, \dots, 2M\}\colon i\leq j-M \} \] be the upper triangle and \[T^l \coloneqq \{ (i,j) \in \{0, \dots, M\} \times \{M+1, \dots, 2M\}\colon j-M\leq i \} \] the lower triangle of the upper part of the building block. We have $(\hor_3,\ver_3) \equiv (\opt, -\opt)$ in $T^l$, and $(\hor_3, \ver_3) \equiv (0,0)$ in $T^u.$ We denote the boundary between $T^l$ and $T^u$ by 
 \[\partial T \coloneqq \{(i, M+i) \colon i\in \{1, \dots, M\}\}.\] The horizontal energy contribution is bounded by 
\begin{equation}\begin{split}
& \Eisohor(u^{(3)} , [0, M\eps] \times [(M+1)\eps, 2M\eps]) \\
& \leq 2\eps^2 \sum_{(i,j)\in T^u} (1-\delta -\cos(0))^2 + 2\eps^2 \sum_{(i,j) \in \partial T } \vert p(\Psi^{hor}_{i+1,j}, \Psi^{hor}_{i,j}) \vert \left\vert \sin\left(\frac{\Psi^{hor}_{i+1,j}}{2}\right) - \sin\left(\frac{\Psi^{hor}_{i,j}}{2}\right)\right\vert^2 \\
& \leq 2\eps^2 \delta^2 \mathcal{H}^0(T^u) + 3 \eps^2 \sum_{(i,j) \in \partial T }  \left\vert \hor_{i+1,j} - \hor_{i,j}\right\vert^2 \\ &\leq M(M+1) \eps^2 \delta^2 + 12 (M+1) \eps^2 \delta \leq 2 (M\eps)^2 \delta^2 + 24 M\eps^2 \delta, \end{split}
\end{equation}
where we used $\vert p(\theta_1, \theta_2)\vert \leq 3/2 $ for all $(\theta_1, \theta_2) \in [-\pi/2, \pi/2]$, see Lemma \ref{lem:p_convergence}, and $\arccos(1-\delta) \leq 2\delta^{1/2}$. The vertical energy contribution can be estimated analogously. Thus, 
\begin{equation}
    \Eiso(u^{(3)}, [0, M\eps]\times [M\eps, 2M\eps] )\leq 4(M\eps)^2 \delta^2 + 48 M\eps^2 \delta.\label{upper:spins:building_upper}
\end{equation}

Next, we calculate the energy on $[0, M\eps]^2$. As before, we introduce the upper triangle \[D^u \coloneqq \{ (i,j) \in \{0, \dots, M\}^2 \colon 0\leq i\leq j \}\] and the lower triangle \[D^l \coloneqq \{ (i,j) \in \{0, \dots, M\}^2 \colon 0\leq j\leq i \}\] of this domain. We start by calculating the vertical contribution of the energy on $[0, M\eps]^2$. Note that we do not have $\opt = \pi/M$ in general. Therefore, the vertical angular velocity is not optimal on $\{0, \dots, M\eps\} \times \{ M\eps\} $ but satisfies the identity 
\[ \Psi^{ver}_{i,M} = i\left( \opt- \frac{\pi}{M}\right) \in \left\lbrack -\frac{\pi}{M+1}, 0\right\rbrack\subseteq [-\opt, 0], \] where we used that $\pi/(M+1)\leq \opt\leq \pi/M.$ Hence, 
\begin{align}
 \Eisover(u^{(3)}, D^u) &\leq 2\eps^2 \sum_{(i,j)\in D^u } (1-\delta -\cos(\Psi^{ver}_{i,j}))^2 + 6 \eps^2 \sum_{(i,j)\in D^u} (\Psi^{ver}_{i,j+1} - \Psi^{ver}_{i,j})^2  \\
& \leq 2\eps^2 \sum_{(i,j)\in D^u }  \left( \int_{\opt}^{i\pi/(j(j+1))} \sin(s) \textup{ d}s \right)^2 +6\eps^2 \sum_{(i,j)\in D^u} \left(\frac{i\pi}{(j+1)(j+2)} - \frac{i\pi}{j(j+1)}\right)^2 \\
& \leq \eps^2 \sum_{(i,j)\in D^u}  \left( \int_{\opt}^{i\pi/(j(j+1))} s \textup{ d}s \right)^2 +24 \eps^2 \sum_{(i,j)\in D^u} \left(\frac{i\pi}{j(j+1)(j+2)}\right)^2 \\
& = \frac12 \eps^2 \sum_{(i,j)\in D^u }  \left( \left(\frac{i\pi}{j(j+1)}\right)^2 - \opt^2 \right)^2 +24 \eps^2 \sum_{(i,j)\in D^u} \left(\frac{i\pi}{j(j+1)(j+2)}\right)^2 \phantom{hallo hallo}\\
& \leq   \eps^2 \sum_{j=1 }^M \sum_{i=1}^j   \left(\frac{i\pi}{j(j+1)}\right)^4+ 8(M\eps)^2\delta^2 + 24 \eps^2 \sum_{j=1}^M \sum_{i=1}^j  \left(\frac{i\pi}{j(j+1)(j+2)}\right)^2\\
& \leq \pi^4 \eps^2 \sum_{j=1}^M \frac{1}{j^3} + 8(M\eps)^2 \delta^2 + 24\pi^2 \eps^2 \sum_{j=1}^M \frac{1}{j^3}\leq 8\pi^4 \eps^2+ 8(M\eps)^2 \delta^2.
\end{align}
Similar estimates hold for the horizontal energy contribution, and the contributions from the lower triangle $D^l.$ Combining these estimates with \eqref{upper:spins:building_upper} yields a universal $C>0$ such that 
\begin{equation}
    \label{upper:energy_block_vortex}
    \Eiso(u^{(3)}, [0,M\eps]\times [0, 2M\eps])\leq C ( \eps^2 + M^2 \eps^2 \delta^2 + M \eps^2 \delta).
\end{equation}

\quad \emph{\textbf{Step 4. Global energy estimate.}} Note that the only energy contribution comes from the boundary region $[0,M\eps] \times [0,1]$, which contains the building block at most $\lceil 1/(2M\eps)\rceil \leq 1/ (M \eps)$ times. 
Therefore, the total energy is bounded by 
\begin{equation}\label{eq:energy3}
    \Eiso(u^{(3)}, \Omega) \leq C \left( \frac{\eps}M + \eps M \delta^2 + \eps \delta \right) \leq 7 C \eps \delta^{1/2}, 
\end{equation}
where we used that $M \leq \pi / \opt \leq \pi / \delta^{1/2}$ and $M \geq \pi / (2\opt) \geq \pi/(4\delta^{1/2})$. Finally, we set $C_3 = 7C$. \\

Combining \eqref{eq:energy1}, \eqref{eq:energy2} and \eqref{eq:energy3} concludes the proof of the lemma with $c_u\coloneqq \max\{4, C_2, C_3\}$.
\end{proof}
\begin{remark}
Note that the number of vortices used by the third competitor $u^{(3)} \colon \Omega \cap\eps\Z^2 \rightarrow \mathbb{S}^1$ constructed above is $2\opt/(2\pi\eps) \approx 2\delta^{1/2}/(\pi\eps)$. On the other hand, by Lemma \ref{lem:vortices-estimates} it holds for the set of vortices $V$ for any spin field that $\eps^2 \mathcal{H}^0(V) \lesssim \Eiso(u,\Omega)$. Hence, for the competitor $u^{(3)}$ we obtain theoretically that $\eps^2\mathcal{H}^0(V) \lesssim \eps \delta^{1/2}$. In particular, the estimate from Lemma \ref{lem:vortices-estimates} is (up to multiplicative constants) sharp for the competitor $u^{(3)}$.
\end{remark}

\subsection{Lower Bound}
\label{sec:lower}

In this section, we aim to prove the lower bound of the scaling law. For this, we will treat the situation of large $\delta \in [\overline{\delta}, 1)$ and the case of small $\delta \in (0, \overline{\delta})$ for some $\overline{\delta}\in (0,1)$ separately. The critical value $\overline{\delta}$ will be chosen so that Lemma \ref{lem:lower_bound_q} holds. 
 We build on the strategy from the continuum setting (see \cite{GiZw22} and the references given there). As the continuum argument was developed for gradient fields this strategy can only be successfully applied in the absence of vortices. The main result of this section reads as followes. 
\begin{proposition}[Lower bound] \label{prop:lower} There is $\overline{\delta} \in (0,1) $ such that the following is true. There is a constant $c>0$, only depending on $\overline{\delta}$, such that the following two statements are true:
\begin{itemize}
    \item[(i)] For all $(\eps, \delta) \in (0,1/2) \times [\overline{\delta}, 1)$ it holds
    \[ \min\{ \Eiso(u, \Omega) \colon u \in \Sbound \} \geq c \eps \delta^{1/2}.\]
    \item[(ii)] For all $\delta \in (0, \overline{\delta})$ and $\delta \leq \eps $ it holds
    \[ \min \{ \Eiso(u, \Omega) \colon u \in \Sbound\} \geq c \min\left\{ \delta^2, \eps\delta^{3/2} \left( \left\vert \ln \left( \frac{\eps}{\delta^{1/2}}\right)\right\vert + 1 \right) \right\}.\]
\end{itemize}
\end{proposition}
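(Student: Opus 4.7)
The two assertions require different strategies.

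For (i), a simple boundary-column calculation suffices. Since $u\in\Sbound$ satisfies $u_{0,j}=(0,1)^T$ for every $j$, the contribution of the column $i=0$ to the vertical energy is exactly
\[
\tfrac{\eps^2}{2}\sum_{j: (0,j)\in \INNver}\bigl|u_{0,j}-2(1-\delta)u_{0,j+1}+u_{0,j+2}\bigr|^2 \;=\; 2\eps^2\delta^2\bigl(\lfloor 1/\eps\rfloor-1\bigr)\;\geq\; c\eps\delta^{2}
\]
for a universal constant $c>0$. For $\delta\in[\overline\delta,1)$ we have $\delta^2\geq \overline\delta^{\,3/2}\delta^{1/2}$, so $\Eiso(u,\Omega)\geq c\,\overline\delta^{\,3/2}\eps\delta^{1/2}$, which is the desired bound.

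For (ii), the plan is to transfer the continuum branching lower bound of \cite{GiZw22} to the discrete setting via the curl-free reduction suggested in the paper. Fix $u\in\Sbound$ with associated angular velocity field $(\hor,\ver)$, chirality parameters $w^\Delta,z^\Delta$, and energy $E:=\Eiso(u,\Omega)$. Assume $E$ is much smaller than the target $\min\{\delta^2,\eps\delta^{3/2}(|\ln(\eps/\delta^{1/2})|+1)\}$ (otherwise there is nothing to prove). The argument proceeds in three steps. \emph{First,} Lemma \ref{lem:vortices-estimates} with $\beta=\pi/2$ (see Remark \ref{rem: vortex}) gives $\eps^2\mathcal{H}^0(V)\lesssim E$, and Remark \ref{rem:beta_special} controls the set of cells where $|\hor|$ or $|\ver|\geq 2\pi/3$ by the same order. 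Thus, outside a closed set $S\subseteq\Omega$ of area $\lesssim E$, the angular velocity field is curl-free and has angles below $\pi/8$. \emph{Second,} on $\Omega':=\Omega\setminus S$ the construction of Lemma \ref{lem:existenc_potentials} yields a discrete potential $\varphi$ with discrete gradient $(2\delta)^{-1/2}(\hor,\ver)$ and $\varphi(0,\cdot)\equiv 0$, and Lemma \ref{lem:lower_bound_q} converts the $q$-sums in \eqref{id:energy_hor} and its vertical analogue into half the discrete Dirichlet energies of $w^\Delta$ and $z^\Delta$ up to an error $\lesssim E$. Together with the well terms $(1-\delta-\cos\hor)^2$ and $(1-\delta-\cos\ver)^2$, this shows that $E$ dominates a discretisation of
\[
\int_{\Omega'}\bigl((w^2-1)^2+(z^2-1)^2\bigr)\,\textup{d}x\,\textup{d}y \;+\;\sigma^2\int_{\Omega'}\bigl(|\partial_1 w|^2+|\partial_2 z|^2\bigr)\,\textup{d}x\,\textup{d}y
\]
with inherited Dirichlet trace $z(0,\cdot)\equiv 0$. \emph{Third,} extending the discrete data piecewise affinely and invoking the continuum branching/ferromagnetic lower bound of \cite{GiZw22} on $\Omega'$ produces the claimed scaling $\min\{\delta^2,\eps\delta^{3/2}(|\ln(\eps/\delta^{1/2})|+1)\}$ up to a multiplicative constant.

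The main obstacle is this last step. The lower bound in \cite{GiZw22} is formulated for gradients on a full rectangle with incompatible Dirichlet data, whereas here the gradient structure is only available on the perforated region $\Omega'=\Omega\setminus S$. A careful covering/slicing argument is needed to show that excising $S$ cannot destroy the logarithmic factor of the branching bound; the hypothesis $\delta\leq\eps$ is precisely what makes the vortex-induced error $\lesssim E$ subleading compared to both $\delta^2$ and $\eps\delta^{3/2}(|\ln(\eps/\delta^{1/2})|+1)$. A secondary technical point is to verify that the Dirichlet trace $z(0,\cdot)\equiv 0$ survives the piecewise affine extension, which follows from $\varphi(0,\cdot)\equiv 0$ together with the fact that the left column of $\Omega$ is (generically) untouched by $S$.
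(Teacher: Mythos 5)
Your treatment of part (i) is correct and arguably more direct than the paper's. Since $u_{0,\cdot}\equiv(0,1)^T$, the vertical second differences in the column $i=0$ are exactly $2\delta\,(0,1)^T$, and summing $\approx 1/\eps$ of them yields $\Eiso(u,\Omega)\geq \tfrac12\eps\delta^2\geq \tfrac12\,\overline\delta^{3/2}\eps\delta^{1/2}$ for $\delta\geq\overline\delta$. The paper explicitly notes this estimate before Lemma~\ref{lem:lower_close_to_one} but then deliberately proves a stronger statement (the same bound without the boundary column) via Lemmas~\ref{lem:lower_close_to_one} and~\ref{lem:lower_close_inbetween}; your version suffices for the assertion as written.

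For part (ii) there is a genuine gap, and also a missed simplification. First, the missed simplification: you set up a perforated domain $\Omega'=\Omega\setminus S$ and worry that excising $S$ might damage the logarithmic branching bound. But under the smallness assumption $\Eiso(u,\Omega)\leq c_1\min\{\delta^2,\eps\delta^{3/2}(|\ln(\eps/\delta^{1/2})|+1)\}$ and $\delta\leq\eps$, the relevant sets are not merely small — they are \emph{empty}. As the paper computes in the opening step of Lemma~\ref{lem:lower_bound_no_vortices}, Lemma~\ref{lem:vortices-estimates} with $\beta=\pi/8$ gives $\mathcal{H}^0(A^{ver})\leq 4\cdot 20^2 c_1(\delta/\eps)^2\leq 4\cdot 20^2 c_1<1$, hence $\mathcal{H}^0(A^{ver})=0$, and likewise $\mathcal{H}^0(A^{hor})=0$ and $\mathcal{H}^0(V)=0$. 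So $S=\emptyset$, all angles are $<\pi/8$, and a single-valued discrete potential exists on the whole lattice. (Incidentally, your Step 1 is internally inconsistent: Remark~\ref{rem:beta_special} controls only angles $\geq 2\pi/3$, not $\geq\pi/8$; the correct tool for your stated conclusion is Lemma~\ref{lem:vortices-estimates} with $\beta=\pi/8$.)

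Second, the actual gap: you reduce the problem to ``invoking the continuum branching/ferromagnetic lower bound of \cite{GiZw22} on $\Omega'$,'' but this invocation is not a black box. The functional in \cite{GiZw22} is posed for genuine Sobolev gradients on a rectangle with a specific trace condition, and your data is a discrete, piecewise-affine field whose nonlinear wells $(1-\delta-\cos\theta)^2$ and Dirichlet term $|\partial_1^{d,\eps}w^\Delta|^2$ only agree with the continuum ones up to errors that must themselves be tracked at the scaling level. The paper does not pass to a continuum functional and cite \cite{GiZw22}; it adapts the \emph{strategy} and re-proves the slicing/coarea/interpolation argument directly in the discrete setting (Step~2 and Step~3 of Lemma~\ref{lem:lower_bound_no_vortices}): a slice-wise dichotomy on each column $i\eps\in[a(\eps,\delta),c_2]$, followed by the coarea/Fubini level-set counting for $F\circ\theta_i$ to bound the number of sign changes of $\ver_{i,\cdot}$ by $\lesssim 1/(i\eps)$, and finally a Hölder/Poincaré argument using the discrete potential to force $i\eps\leq a(\eps,\delta)$. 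You correctly identify that ``the main obstacle is this last step,'' but you do not supply it, and the reduction-then-cite plan as stated would not carry the logarithmic factor through without essentially reproducing that discrete argument.
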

\begin{proof}
    Let $\eta\in (0,1)$ be the constant introduced in Lemma \ref{lem:lower_bound_no_vortices} and $\overline{\delta} \coloneqq \min\{\eta^2/16, 1-\cos(\pi/16)\}$. The first statement (i) follows from Lemma \ref{lem:lower_close_to_one} and Lemma \ref{lem:lower_close_inbetween} as follows. Let $\tilde{c}$ be the maximal constant from both Lemmas. For $(\eps,\delta) \in (0,1/2) \times (63/64, 1)$ we obtain from Lemma \ref{lem:lower_close_to_one} the lower bound 
    \begin{equation}
        \Eiso(u, \Omega) \geq \tilde{c} \eps \geq\tilde{c}\eps\delta^{1/2} \quad \textup{ for all } u \in \Sbound.
    \end{equation}
    Lemma \ref{lem:lower_close_inbetween} implies for $(\eps,\delta) \in (0,1/2) \times [\overline{\delta}, 63/64)$
    \[ \Eiso(u, \Omega) \geq \tilde{c} \eps \delta^2 \geq \tilde{c} \overline{\delta}^{3/2} \eps\delta^{1/2}.\]
    For the second statement we separately discuss the case large and small $\eps$ i.e., $\eps \in [\eta/4, 1/2)$ and $\eps \in (0, \eta/4).$ If $(\eps, \delta) \in [\eta/4, 1/2)\times (0, \overline{\delta})$ we apply Lemma \ref{lem:lower_close_inbetween} to obtain 
    \[ \Eiso(u, \Omega) \geq \tilde{c} \eps \delta^2 \geq \tilde{c} \frac\eta4 \delta^2 \geq   \tilde{c} \frac\eta4 \min\left\{ \delta^2, \eps\delta^{3/2} \left( \left\vert \ln \left( \frac{\eps}{\delta^{1/2}}\right)\right\vert + 1 \right) \right\}.\]
   The lower bound for $(\eps, \delta) \in (0, \eta/4)\times (0, \overline{\delta})$ with $\delta \leq \eps$ is proven in Lemma \ref{lem:lower_bound_no_vortices} for a constant constant $c_l >0$. Then (i) and (ii) follow for $c \coloneqq \min\{ \tilde{c}\overline{\delta}^{3/2}, \tilde{c}\eta/4,  c_l\}$. 
\end{proof}

\subsubsection{Lower bound for large $\delta \in [\overline{\delta},1)$}
Using the definition of the energy $\Eiso(\cdot, \Omega)$ and the boundary condition $u_{0,\cdot} = (0,1)^T$, one easily obtains from the energy on indices of the form $(0,j)$ the estimate $\Eiso(u,\Omega) \geq \eps\delta^2/2$ for any spin field $u\in \mathcal{S}_0(\Omega;\eps)$. However, we will prove the same lower bound even without considering the energy contained in the left boundary. 
\begin{lemma}\label{lem:lower_close_to_one}
There is a constant $c > 0$ such that for all $(\eps,\delta) \in (0, 1/2)\times  (63/64, 1)$ and $u\in \Sbound$ it holds
\[\Eiso(u, [0,2\eps]\times [0,1]) \geq c\eps \]
\end{lemma}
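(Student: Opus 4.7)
My plan is to lower-bound $\Eiso(u,[0,2\eps]\times[0,1])$ using only the vertical second-difference stencils sitting at the leftmost column $i=0$, which are fully determined by the Dirichlet boundary datum. Since $u\in \Sbound$ forces $u_{0,\cdot}\equiv(0,1)^T$, for every admissible index $(0,j)\in \INNver$ one has
\[
u_{0,j}-2(1-\delta)u_{0,j+1}+u_{0,j+2}
= \bigl(1-2(1-\delta)+1\bigr)(0,1)^T
= 2\delta\,(0,1)^T,
\]
so its squared norm equals exactly $4\delta^2$, and this stencil contributes $\tfrac{\eps^2}{2}\cdot 4\delta^2 = 2\eps^2\delta^2$ to $\Eisover(u,[0,2\eps]\times[0,1])$.

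Summing over the $\lfloor 1/\eps\rfloor-1$ admissible values of $j$ and using the elementary estimate $\lfloor 1/\eps\rfloor - 1\ge 1/(4\eps)$ valid for all $\eps\in (0,1/2)$ (checked by a short case distinction at $\eps=1/4$), I would obtain
\[
\Eiso(u,[0,2\eps]\times[0,1])
\ge \Eisover(u,[0,2\eps]\times[0,1])
\ge 2\eps^2\delta^2\cdot \tfrac{1}{4\eps}
= \tfrac{\eps\delta^2}{2}.
\]
The hypothesis $\delta>63/64$ then gives $\Eiso(u,[0,2\eps]\times[0,1])\ge \tfrac{1}{2}\bigl(\tfrac{63}{64}\bigr)^2\eps$, so the lemma holds with $c:=\tfrac{1}{2}(63/64)^2$.

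There is essentially no obstacle: the argument is a direct computation that exploits only the frozen Dirichlet column and ignores every other interaction in the strip, matching the observation preceding the statement that the $(0,j)$-type stencils already suffice. A more refined proof that avoids the $i=0$ column altogether would have to combine the discrete curl identity $\ver_{1,j}-\ver_{0,j}=\hor_{0,j+1}-\hor_{0,j}$ (with $\ver_{0,\cdot}\equiv 0$) with Lemma \ref{lem:vortices-estimates}, arguing that in the vortex-free case the telescoping sum $\sum_j \ver_{1,j}\in(-2\pi,2\pi)$ precludes $\ver_{1,\cdot}$ from hitting the wells $\pm\opt\approx\pm\pi/2$ throughout the column, which then forces a comparable amount of vertical energy at $i=1$; but this refinement is not required for the lemma as stated.
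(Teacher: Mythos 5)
Your proof is correct. You use the $i=0$ column of vertical stencils, which are fully frozen by the Dirichlet condition $u_{0,\cdot}\equiv(0,1)^T$: each such stencil contributes exactly $\tfrac{\eps^2}{2}\cdot 4\delta^2=2\eps^2\delta^2$ to $\Eisover(u,[0,2\eps]\times[0,1])$, and the elementary counting estimate on the number of admissible indices $(0,j)\in\INNver$ (at least $\lfloor 1/\eps\rfloor-2\geq 1/(4\eps)$ for all $\eps\in(0,1/2)$) yields the claim with $c=\tfrac12(63/64)^2$. This is precisely the trivial bound the authors explicitly acknowledge in the sentence preceding the lemma (``one easily obtains from the energy on indices of the form $(0,j)$ the estimate $\Eiso(u,\Omega)\geq\eps\delta^2/2$''), and it settles the lemma as stated. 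The paper, however, deliberately takes a different and substantially longer route: it proves the same bound \emph{without} touching the frozen $i=0$ vertical stencils, working strip by strip on $[0,2\eps]\times[j\eps,(j+2)\eps]$ and extracting $\eps^2/16$ from each strip by a case distinction on $|\hor_{0,j+k}+\hor_{1,j+k}|$, combined with the discrete curl identity and $\ver_{0,\cdot}\equiv 0$ to control the vertical angular velocities on the columns $i=1,2$. Your argument buys brevity and transparency; the paper's buys the stronger qualitative information that the $c\eps$ scaling is not an artifact of the forced boundary column but is already present in the interior of the strip. Your closing remark even anticipates the two ingredients (curl identity, vortex control) the paper's proof actually deploys, so you read the structure of the intended argument correctly even while choosing a simpler one.
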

\begin{proof}
Let $u\in \Sbound$. Recall, for any unit vectors $u,v,w\in \mathbb{S}^1$ with $\theta_{u,v} = \sgn(u\times v)\arccos(u\cdot v)$ and $\theta_{v,w} = \sgn(v\times w) \arccos(v\cdot w)$ the identity 
\begin{equation}\begin{split}
& \vert u -2(1-\delta) v + w\vert^2 = 2 + 2\cos(\theta_{u,v} +\theta_{v,w}) -4(1-\delta) \cos(\theta_{u,v}) - 4(1-\delta) \cos(\theta_{v,w}) +4(1-\delta)^2  \\
& \quad = 4\cos^2\left(\frac{\theta_{u,v} +\theta_{v,w}}{2} \right)-4(1-\delta) \cos(\theta_{u,v}) - 4(1-\delta) \cos(\theta_{v,w}) +4(1-\delta)^2, \end{split}
\end{equation}
where we used that  $\cos(2\theta) = 2\cos^2(\theta) -1$ for $\theta \in \R$. Let $(\hor, \ver)\colon \Omega \cap\eps\Z^2 \rightarrow [-\pi, \pi)^2$ the angular velocity field associated to $u$ and $j\eps\leq 1-2\eps\in [0,1)$. We will show that 
\begin{equation}\label{eq: lower bound large delta 1}
  \Eiso(u,[0,2\eps]\times [j\eps, (j+2)\eps]) \geq \frac1{16} \eps^2.  
\end{equation} 
First we argue that we may assume that
\begin{equation}
    \label{ass:two_angles}
    \frac{\vert \hor_{0,j+k} +\hor_{1,j+k}\vert}{2}\in \left(\frac{5\pi}{12}, \frac{7\pi}{12} \right)\quad  \textup{ for all } k\in \{0,1,2\}. 
\end{equation}
 Otherwise, there is $k\in \{0, 1, 2\}$ such that the preceding assumption does not hold. Assume that $\vert \hor_{0, j+k} +\hor_{1, j+k}\vert \leq 5\pi/6$. Then, we can estimate the energy on $[0,2\eps] \times [j\eps, (j+2)\eps]$ by 
\begin{equation}\begin{split}
& \Eiso(u,[0,2\eps]\times [j\eps, (j+2)\eps]) \\
& \geq \frac{\eps^2}{2}\left( 4\cos^2\left(\frac{\hor_{0,j+k} +\hor_{1,j+k}}{2} \right)-4(1-\delta) \cos(\hor_{0,j+k}) - 4(1-\delta) \cos(\hor_{1,j+k}) +4(1-\delta)^2 \right) \\
& \geq \frac{\eps^2}{2}\left(4\cos^2\left(\frac{5\pi}{12}\right) -8(1-\delta) \right) \geq \frac{\eps^2}{2}\left( \frac14  - 8(1-\delta)\right)  \geq \frac{1}{16}\eps^2,  \end{split} 
\end{equation}
where we used $4\cos^2(5\pi/12) \geq 1/4$ and $1-\delta \leq 1/64$. If $\vert \hor_{0, j+k} +\hor_{1, j+k}\vert\geq 7\pi/6$, one argues similarly. Now, assume that \eqref{ass:two_angles} holds. Since $(\hor,\ver)$ is the angular velocity field of a spin field, we have for $k=0,1$
\begin{equation}
    \label{lower:spins:curl_identity}
\ver_{2, j+k} = \hor_{0, j+k+1} + \hor_{1, j+k+1} - \hor_{1, j+k} -\hor_{0, j+k} -\eps\curl((\hor,\ver)_{0,j+k})-\eps\curl((\hor, \ver)_{1, j+k}),
\end{equation}
 where we used $\ver_{0,\cdot} \equiv 0$. We claim that it holds 
\begin{equation}
\vert \ver_{2, j+k} \vert \leq \frac\pi3 \quad \textup{ for } k=0,1.\label{lower:spins:big_delta:vertical_bound}    
\end{equation}
Define $\theta^u_k \coloneqq \hor_{0, j+k+1} + \hor_{1, j+k+1}$ and $\theta^l_k \coloneqq \hor_{0, j+k} + \hor_{1, j+k} $.
First, assume that $\textup{sgn}(\theta^u_k) = \textup{sgn}(\theta^l_k)$ and estimate using \eqref{ass:two_angles}
\begin{align*}
|\ver_{2,j+k}| &\geq |\eps\curl((\hor_{0,j+k},\ver_{0,j+k})) + \eps\curl((\hor_{1, j+k}, \ver_{1,j+k}))|   - |\theta_k^u - \theta_k^l| \\ & \geq |\eps\curl((\hor_{0,j+k},\ver_{0,j+k})) + \eps\curl((\hor_{1, j+k}, \ver_{1,j+k}))| - \frac{\pi}6.
\end{align*}
Since $\ver_{2, j+k}\in [-\pi, \pi)$ and $\eps\curl((\hor_{0,j+k},\ver_{0,j+k})), \eps\curl((\hor_{1, j+k}, \ver_{1,j+k})) \in \{0, \pm 2\pi\}$ it follows that $\eps\curl((\hor_{0,j+k},\ver_{0,j+k})) + \eps\curl((\hor_{1, j+k}, \ver_{1,j+k})) = 0$.
In turn, this implies by \eqref{lower:spins:curl_identity} that it holds $|\ver_{2,j+k}| \leq |\theta_k^u - \theta_k^l| \leq \pi / 6 \leq \pi/3$. 
Now, assume $\textup{sgn}(\theta^u_k) \neq \textup{sgn}(\theta^l_k)$. 
Without loss of generality, we may assume $\textup{sgn}(\theta^u_k) = 1.$ Then, 
\begin{equation}
    \begin{split}
       \ver_{2, j+k} &= \theta^u_k -\theta^l_k -\eps\curl((\hor_{0,j+k},\ver_{0,j+k}))-\eps\curl((\hor_{1, j+k}, \ver_{1,j+k}))\\
       &\leq \frac{14}{6}\pi  -\eps\curl((\hor_{0,j+k},\ver_{0,j+k}))-\eps\curl((\hor_{1, j+k}, \ver_{1,j+k}))
    \end{split}
\end{equation}
and 
\begin{equation}
    \begin{split}
       \ver_{2, j+k} &= \theta^u_k -\theta^l_k -\eps\curl((\hor_{0,j+k},\ver_{0,j+k}))-\eps\curl((\hor_{1, j+k}, \ver_{1,j+k}))\\
       &\geq \frac{10}{6}\pi  -\eps\curl((\hor_{0,j+k},\ver_{0,j+k}))-\eps\curl((\hor_{1, j+k}, \ver_{1,j+k})).
    \end{split}
\end{equation}
As $\ver_{2, j+k}\in [-\pi, \pi)$ and $\eps\curl((\hor_{0,j+k},\ver_{0,j+k})), \eps\curl((\hor_{1, j+k}, \ver_{1,j+k})) \in \{0, \pm 2\pi\}$ it follows that $\eps\curl((\hor_{0,j+k},\ver_{0,j+k})) + \eps\curl((\hor_{1, j+k}, \ver_{1,j+k})) = 2\pi$.
Thus, it holds $|\ver_{2, j+k}| \leq \pi/3$.
Then we estimate using \eqref{lower:spins:big_delta:vertical_bound}
\begin{equation}\begin{split}
& \Eiso(u,[0,2\eps]\times [j\eps, (j+2)\eps]) \\
& \geq \frac{\eps^2}{2}\left( 4\cos^2\left(\frac{\ver_{1,j} +\ver_{1,j+1}}{2} \right)-4(1-\delta) \cos(\ver_{1,j}) - 4(1-\delta) \cos(\ver_{1,j+1}) +4(1-\delta)^2 \right) \\
& \geq \frac{\eps^2}{2} \left( 4\cos^2\left( \frac\pi3\right) - 8(1-\delta)\right)\geq \frac{\eps^2}{2}\left( 1  - 8(1-\delta)\right)  \geq \frac{1}{16} \eps^2,\end{split}   
\end{equation}
where we used $\cos(\pi/3) = 1/2.$ This shows \eqref{eq: lower bound large delta 1}. 
Eventually, we sum estimate \eqref{eq: lower bound large delta 1} to find
\begin{align*}
 \Eiso(u, [0,2\eps]\times [0,1]) &\geq \frac13 \sum_{j=0}^{\left\lfloor 1/\eps\right\rfloor-2}\Eiso(u, [0,2\eps]\times [j\eps,j\eps +2\eps]) \\
 &\geq \left( \left\lfloor \frac{1}{\eps}\right\rfloor - 1 \right) \frac{\eps^2}{48} \geq \frac{\eps}{192}.
\end{align*}
\end{proof}
We continue by discussing the case of $\delta$ being bounded away from one and away from zero.
\begin{lemma}\label{lem:lower_close_inbetween}
There is a constant $c > 0$ such that for all $(\eps,\delta)  \in (0, 1/2)\times (0, 63/64)$ and $u \in \Sbound$ it holds 
\[\Eiso(u, [0,2\eps]\times [0,1]) \geq c\eps\delta^{2}. \]
\end{lemma}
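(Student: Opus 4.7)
The estimate follows entirely from restricting attention to the leftmost column of lattice points $\{i=0\}$, where the Dirichlet boundary condition $u_{0,j}=(0,1)^T$ fully determines the configuration. Along this column every vertical next-to-nearest-neighbor triple $(u_{0,j},u_{0,j+1},u_{0,j+2})$ is identically constant and hence ferromagnetic, which in the helimagnetic regime $\delta>0$ is penalized by the $J_3$-type term in $\Eisover$. Unlike in Lemma~\ref{lem:lower_close_to_one}, here the boundary contribution already produces the correct scaling, so no avoidance of the boundary cells is needed.

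Concretely, for each $j$ with $(0,j)\in\INNver$ one computes
\[ |u_{0,j}-2(1-\delta)u_{0,j+1}+u_{0,j+2}|^2 \;=\; |(2-2(1-\delta))(0,1)^T|^2 \;=\; 4\delta^2, \]
so the corresponding summand of $\Eisover(u,[0,2\eps]\times[0,1])$ equals exactly $2\eps^2\delta^2$. All indices $(0,j)$ with $0\leq j\leq \lfloor 1/\eps\rfloor -2$ lie in $\INNver$ and in the strip $[0,2\eps]\times[0,1]$, so summing yields
\[ \Eiso(u,[0,2\eps]\times[0,1]) \;\geq\; \Eisover(u,[0,2\eps]\times[0,1]) \;\geq\; 2\eps^2\delta^2\bigl(\lfloor 1/\eps\rfloor-1\bigr) \;\geq\; c\,\eps\delta^2 \]
for a universal constant $c>0$ and all $\eps\in(0,1/2)$, after a short bookkeeping step distinguishing $\eps\leq 1/3$ from $\eps\in(1/3,1/2)$ to handle the floor function.

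There is no real obstacle here: the argument relies only on the definition of $\Eisover$ and the Dirichlet boundary condition, and invokes neither the curl constraint nor the chirality reformulation. In particular, the hypothesis $\delta<63/64$ is not used in the estimate; it is stated purely for bookkeeping in Proposition~\ref{prop:lower}, where the complementary range $\delta\in[63/64,1)$ is covered by the stronger bound $c\eps$ of Lemma~\ref{lem:lower_close_to_one}. The conceptual point is that for $\delta$ bounded away from $1$ the boundary condition itself is already incompatible with any local ground state of the vertical chain by a margin of order $\delta^2$ per site, which immediately yields the desired $\eps\delta^2$ scaling.
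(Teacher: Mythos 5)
Your proof is correct. It is also genuinely different from the paper's: you restrict to the leftmost column $\{i=0\}$, where the Dirichlet condition $u_{0,j}=(0,1)^T$ forces every vertical triple to be ferromagnetic, and you read off the energy $2\eps^2\delta^2$ per cell directly from the definition. The paper is well aware of this shortcut—the paragraph immediately before the Lemma notes that ``one easily obtains from the energy on indices of the form $(0,j)$ the estimate $\Eiso(u,\Omega)\geq\eps\delta^2/2$''—but the paper's proof then deliberately establishes the same lower bound \emph{without} using the boundary column. Instead it assumes $\Eiso(u,\Omega)\leq\eta\eps$ (else done), uses Lemma~\ref{lem:vortices-estimates} with $\beta=2\pi/3$ to bound the number of vortices in the leftmost cells, and then exploits the curl identity $\hor_{0,j+1}-\ver_{1,j}-\hor_{0,j}=0$ to show, by a case distinction on signs, that at every vortex-free boundary cell at least one of the three angles lies outside $[\tfrac34\opt,\tfrac54\opt]$ in absolute value; this yields a $\delta^2$ deficit per good cell in the potential-well term. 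Your observation that the hypothesis $\delta<63/64$ is unnecessary is accurate for your argument, whereas the paper's route genuinely needs it (the vortex bound $8\eta\eps/(\eps^2(1-\delta)^2)\leq\tfrac1{4\eps}$ requires $1-\delta\geq 1/64$). What the paper's harder argument buys is the conceptual point that the $\eps\delta^2$ scaling is not an artifact of the Dirichlet data being penalized at the wall—the energy must concentrate even one cell inward—which is methodologically aligned with the proof of Lemma~\ref{lem:lower_bound_no_vortices} and makes the vortex-counting machinery reusable. For the literal statement of the Lemma, however, your two-line computation is a complete proof with a better constant and a weaker hypothesis.
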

\begin{proof}
Let $\eta := 1/(32 \cdot 64^2)$ and assume that $\Eiso (u, \Omega) \leq \eta \eps$ (otherwise, there is nothing to show). 
We introduce the set of vortices along the left boundary
\begin{equation}\begin{split}
 V_0 \coloneqq &\left\{ (0,j) \in \Z^2 \colon j\eps\in (0, 1-\eps)\cap\eps\Z \textup{ and }  \left\vert  \eps\curl((\hor_{0,j},\ver_{0,j}))\right\vert = 2\pi \right\} \\
 \subseteq &\left\{ (0,j) \in \Z^2 \colon j\eps \in (0, 1-\eps)\cap\eps\Z \textup{ and } \max\{\vert\hor_{0, j}\vert, \vert \hor_{0,j+1}\vert, \vert \ver_{1,j}\vert \} \geq  \frac{2}{3}\pi\right\} =:B_0.\end{split} \end{equation}
Lemma \ref{lem:vortices-estimates} and Remark \ref{rem:beta_special} for $\beta = 2\pi/3$ yield the estimate 
\begin{equation}
 \mathcal{H}^0(V) \leq \mathcal{H}^0(B_0) \leq \frac{8\Eiso(u, \Omega)}{\eps^2(1-\delta)^2 }\leq \frac{8 \eta \eps}{\eps^2 (1-\delta)^2} \leq \frac{1}{4\eps}.\label{lower:big_delta:vortices}
\end{equation} 
Let $\mathcal{I}_0$ be the set of cells on the left boundary without vortices i.e., 
\[\mathcal{I}_0\coloneqq \{ (0,j) \in \Z^2 \colon j\eps \in (0,1) \textup{ and } j \notin B_0\}. \]  By \eqref{lower:big_delta:vortices} and $\eps\in (0, 1/2)$, we have 
\begin{equation}
    \mathcal{H}^0(\mathcal{I}_0) \geq \frac{3}{4\eps}-1\geq \frac{1}{4\eps}.
\end{equation}
Now, let $(0, j)\in \mathcal{I}_0$. Since $V_0 \subseteq B_0$, it holds
\begin{equation}
    0 = \hor_{0, j+1} - \ver_{1,j}-\hor_{0,j}=: \theta_1^j -\theta_2^j -\theta_3^j. 
\end{equation} 
We claim that there exists $k\in \{1,2,3\}$ such that
\begin{equation}
    \vert \theta_k^j\vert \notin \left\lbrack\frac34\opt, \frac54 \opt\right\rbrack. \label{lower:spins:smaller_delta:bound:angles}
\end{equation}
Assuming \eqref{lower:spins:smaller_delta:bound:angles} is true for every $j \in \mathcal{I}_0$, we estimate the energy as in \eqref{eq: estimate energy reformulation} in the proof of Lemma \ref{lem:vortices-estimates} 
\begin{equation}\begin{split}
  &\Eiso(u,\Omega) \\ \geq &\frac{1}{2 \cdot 64^2}\eps^2 \sum_{(0,j) \in \mathcal{I}_0} \left\lbrack (1-\delta - \cos(\hor_{0,j}))^2+ (1-\delta - \cos(\hor_{0,j+1}))^2+ (1-\delta - \cos(\ver_{1,j}))^2 \right\rbrack   \\ 
 \geq &\frac{1}{2\cdot 64^2} \eps^2 \mathcal{H}^0(\mathcal{I}_0) \min\left\{ \left\vert \cos(\opt) -\cos\left(\frac34\opt\right) \right\vert^2, \left\vert \cos(\opt) - \cos\left(\frac54\opt\right)\right\vert^2\right\}  \\
 \geq &\frac{\eps}{8\cdot 64^2} \min\left\{ \left\vert \int_{\frac34\opt}^{\opt} \sin(s) \textup{ d}s\right\vert^2, \left\vert \int_{\opt}^{\frac54\opt} \sin(s) \textup{ d}s\right\vert^2\right\} \\ \geq &\sin^2\left( \frac34 \opt \right)\frac{\eps\opt^2 }{8\cdot 16 \cdot 64^2}\\ 
 \geq &\frac14 \left(\frac34\right)^2 \opt^2 \frac{\eps\delta }{8\cdot 16 \cdot 64^2}\geq \frac{9}{4\cdot 8\cdot 16^2 \cdot 64^2}\eps \delta^2,\end{split}
\end{equation}
where we used $\delta^{1/2} \leq \opt= \arccos(1-\delta)$. This yields the lower bound with $c\coloneqq 9/ (4\cdot 8\cdot 16^2 \cdot 64^2) $. \\
Hence, it is left to prove that there is $k\in \{1, \dots , 3\}$ such that \eqref{lower:spins:smaller_delta:bound:angles} holds.
For the sake of a contradiction, assume $\vert\theta^j_k\vert\in [3 \opt/4, 5\opt/4]$ for all $k\in \{1, \dots , 3\}$. 
Without loss of generality, we may assume $ \theta_1^j \geq 0$. 
Since $(\hor,\ver)$ is curl-free, we have 
$\vert \theta_1^j \vert = \theta_1^j = \theta_2^j + \theta_3^j$. 
If $\textup{sgn}(\theta_2^j) = \textup{sgn}(\theta_3^j)$, we estimate 
\[ \frac{5}4 \opt \geq \theta_1^j \geq \frac64 \opt > \frac{5}{4}\opt,
\] which is a contradiction.
Next, assume $\textup{sgn}(\theta_2^j) \neq \textup{sgn}(\theta_3^j).$ 
Then
\[
\frac34 \opt \leq \theta_1^j=\theta_2^j + \theta_3^j \leq -\frac34 \opt + \frac54 \opt = \frac12 \opt < \frac34 \opt,
\] 
which is again a contradiction.
\end{proof}

\subsubsection{Lower bound for small $\delta \in (0, \overline{\delta})$} In this section we prove the lower bound for small interaction parameters $\delta \in (0, \overline{\delta})$. In this setting Lemma \ref{lem:lower_bound_q} allows us to exclude the presence of vortices. The remainder of the proof is inspired by the strategy from \cite[Lemma 4]{GiZw22} for a related continuum model, see also \cite{GiZw23_Martensites,hex}.
 \begin{lemma}
 \label{lem:lower_bound_no_vortices} Let $\eta:= \frac{1}{2\cdot 4\cdot 12^2 \cdot 256}$. Then there is a constant $c_l> 0$ such that for all $\delta \in (0, \eta^2/16)$, $\eps\in (0, \eta/4)$ with $\delta \leq \eps$ and $u \in \Sbound$ it holds
 \[\Eiso(u,\Omega) \geq c_l \min\left\{ \delta^2, \eps\delta^{3/2} \left(\left\vert \ln \left( \frac{\eps}{\delta^{1/2}}\right) \right\vert +1  \right) \right\}.  \] 
 \end{lemma}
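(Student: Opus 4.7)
The plan is to adapt the continuum branching-type lower bound from \cite{GiZw22} to the present discrete setting, exploiting the reduction to vortex-free configurations indicated in the remark after Theorem \ref{thm:scaling}. First, I would observe that if $\Eiso(u,\Omega) \geq \min\{\delta^2, \eps\delta^{3/2}(|\ln(\eps/\delta^{1/2})|+1)\}$ there is nothing to prove, so I may assume the opposite. Lemma \ref{lem:vortices-estimates} together with Lemma \ref{lem:lower_bound_q} applied with threshold $\beta = \pi/8$ then shows that the number of lattice cells carrying a large angular velocity is $O(\Eiso(u,\Omega)/\eps^2)$; by \eqref{size-angles-vortex} this bounds the set of vortices by the same quantity. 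Since in the relevant regime $\Eiso(u,\Omega)/\eps^2$ is much smaller than $\eps^{-1}$, I can perform a surgical modification on the (sparse) vortex set, producing a vortex-free angular velocity field while increasing the energy only by a multiplicative constant.

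Once curl-freeness is achieved, Lemma \ref{lem:existenc_potentials} yields a scalar potential $\phi\in W^{2,2}(\Omega)$ with $\nabla\phi=(2\delta)^{-1/2}(\pa(\theta^{hor}),\pa(\theta^{ver}))$ and $\partial_2\phi(0,\cdot)\equiv 0$, coming from the boundary condition $u_0\equiv(0,1)^T$. The reformulations \eqref{gamma:spins:energie_one}--\eqref{spin:gamma:reformulation_vertical}, combined with Lemma \ref{lem:lower_bound_q} (which now gives $p\geq 1/2$ everywhere after the vortex surgery), bound $\Eiso(u,\Omega)$ from below by a discrete Modica-Mortola-type functional in the chirality parameters $(w^\Delta,z^\Delta)$, subject to the curl-free constraint and the boundary datum $z^\Delta(0,\cdot)=0$. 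Up to standard passage-to-continuum errors controlled by Theorem \ref{th:embedding} and the smallness of the angular velocities, this reduces the claim to the continuum lower bound
\[
\frac{1}{\sigma}\int_\Omega (|\partial_2\phi|^2-1)^2\, d\mathcal{L}^2 + \sigma\int_\Omega |\partial_{22}\phi|^2\, d\mathcal{L}^2 \;\geq\; c\,\min\{1,\sigma(|\ln\sigma|+1)\}
\]
for scalar $\phi$ with $\partial_2\phi(0,\cdot)=0$ and $\sigma=\eps/\sqrt{2\delta}$.

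The continuum bound above is of the type established in \cite{GiZw22}; I would either quote it after rescaling or re-derive it by slicing $\Omega$ into horizontal lines $\{y=y_0\}$, applying a one-dimensional interpolation inequality to bound the transition cost of $\partial_2\phi$ from $0$ at $x=0$ to $\pm 1$ in the bulk. The key mechanism is a dyadic decomposition of the boundary layer: on each strip $[\ell_{k+1},\ell_k]\times(0,1)$ with $\ell_k=2^{-k}$, one obtains a local lower bound of order $\min\{\ell_k,\sigma^2/\ell_k\}$, and summing over scales between the natural microscopic cutoff $\ell\sim\sigma$ and the macroscopic size $\ell\sim 1$ produces the logarithmic factor. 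Multiplying the continuum estimate by the rescaling factor $\sqrt{2}\,\eps\delta^{3/2}$ yields the stated bound, with the minimum between $1$ and $\sigma(|\ln\sigma|+1)$ corresponding exactly to the minimum in the statement.

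The main obstacle is the discrete-to-continuum passage at the smallest relevant scale $\ell\sim\eps/\delta^{1/2}$, below which the lattice becomes visible and no further branching refinement is possible. One must ensure that (i) the vortex-removal surgery does not destroy the lower bound on any individual dyadic strip, (ii) the factor $p(\theta^{hor}_{i,j},\theta^{hor}_{i+1,j})$ in \eqref{id:energy_hor} stays uniformly bounded below across all scales (here Lemma \ref{lem:p_convergence} is decisive, together with the smallness of most angles coming from the vortex bound), and (iii) the piecewise affine interpolation $\pa(\theta)$ faithfully represents the discrete sums up to errors that vanish after integration on each scale. Handling this cutoff carefully is precisely what matches the discrete lower bound to the continuum one and produces the sharp logarithmic prefactor.
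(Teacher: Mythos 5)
Your high-level strategy — reduce to curl-free fields, slice along one direction, and sum dyadic contributions to produce the logarithm — is indeed the strategy used in the paper's proof, but the execution diverges in two places, one of which is a genuine gap.

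First, the proposed ``surgical modification'' to remove a sparse set of vortices is both unnecessary and unjustified. Under the assumed upper bound $\Eiso(u,\Omega)\leq c_1\min\{\delta^2,\ldots\}$ and the hypothesis $\delta\leq\eps$, the estimate from Lemma~\ref{lem:vortices-estimates} gives
\[
\mathcal{H}^0(A^{ver})\leq \frac{C\,\Eiso(u,\Omega)}{\eps^2}\leq C\,c_1\left(\frac{\delta}{\eps}\right)^2\leq C\,c_1<1,
\]
so the set of large angles (and hence of vortices) is \emph{empty}, not merely sparse — this is exactly how the paper achieves curl-freeness. Your version only concludes $\mathcal{H}^0(V)\ll\eps^{-1}$ and then appeals to a surgery that ``increases the energy only by a multiplicative constant.'' That assertion is not established and in fact cuts against the grain of the problem: the vortex construction in the upper bound shows that vortices can \emph{decrease} energy, so a surgery removing them need not preserve the energy scale. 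Without a concrete surgery lemma, the reduction to curl-free fields does not go through as you describe it.

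Second, once curl-freeness is in hand, you propose to pass to the continuum potential $\phi\in W^{2,2}(\Omega)$ via Lemma~\ref{lem:existenc_potentials} and invoke or re-derive the continuum Modica--Mortola lower bound from \cite{GiZw22}. The paper does not do this. It stays entirely discrete: for each column $x=i\eps$ it proves that either the potential term or the surface term carries at least $\min\{c_2\delta^2,\ c_2\eps\delta^{3/2}/(i\eps)\}$ of energy, using the discrete reformulation \eqref{spin:gamma:reformulation_vertical} with Lemma~\ref{lem:lower_bound_q}, a coarea/Fubini argument applied to the one-dimensional piecewise affine interpolation of $\theta^{ver}_{i,\cdot}$, and a Poincar\'e-type estimate on the discrete potential $u(i\eps,j\eps)=(\eps/\opt)\sum_{k<i}\hor_{k,j}$. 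Summing over $i\eps\in[a(\eps,\delta),c_2]$ produces the logarithm. You are right that this is morally the branching lower bound from \cite{GiZw22}, and you correctly flag the three technical obstacles (behaviour at the lattice cutoff $\ell\sim\eps/\delta^{1/2}$, uniform lower bound on $p$, interpolation errors), but these are precisely the points the continuum black box cannot absorb: the continuum result cannot be quoted because the piecewise affine interpolation only agrees with the discrete energy after the large-angle and vortex sets are shown to vanish, and the factor $p$ must be controlled scale by scale. Re-deriving the slice estimate discretely, as the paper does, is the substance of the proof, not a passage-to-the-limit afterthought.
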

 \begin{remark}\label{rem:scaling_continous}
 Note, that 
 \[ \min\left\{ \delta^2, \eps\delta^{3/2} \left(\left\vert \ln \left( \frac{\eps}{\delta^{1/2}}\right) \right\vert +1  \right) \right\}= \begin{cases}
 \delta^2 \quad &\textup{ if } \delta^{1/2} \leq \eps,\\
 \eps\delta^{3/2} \left(\left\vert \ln \left( \frac{\eps}{\delta^{1/2}}\right) \right\vert +1  \right)  \quad & \textup{ if } \eps\leq\delta^{1/2}.
 \end{cases}\] 
 \end{remark}
 \begin{proof}
Let $u \in \Sbound$, $c_1 \coloneqq 1/(32\cdot 12^2 \cdot 256)$ and $c_2 \coloneqq \eta$. \\

 \quad \textbf{Step 1: Preparation.} We may assume that $u$ satisfies 
 \begin{equation}
     \Eiso(u, \Omega) \leq c_1 \min\left\{ \delta^2, \eps\delta^{3/2} \left(\left\vert \ln \left( \frac{\eps}{\delta^{1/2}}\right) \right\vert +1  \right) \right\}.\label{lower:no_vortices:energy_ass}
 \end{equation}
Otherwise there is nothing to show.  
 Next, using \eqref{spin:gamma:reformulation_vertical} and Lemma \ref{lem:lower_bound_q} yields 
 \begin{equation}\begin{split}
      \Eiso(u, \Omega) \geq & \eps^2\sum_{(i,j) \in \INNver} (1-\delta -\cos(\ver_{i,j}))^2   +  2\eps^2 \sum_{(i,j) \in \INNver} p(\ver_{i,j+1}, \ver_{i,j})\left\vert\sin\left( \frac{\ver_{i+1,j}}{2}\right)-\sin\left( \frac{\ver_{i,j}}{2}\right) \right\vert^2 \\
     \geq  & \eps^2\sum_{(i,j) \in \INNver} (1-\delta -\cos(\ver_{i,j}))^2 +\eps^2 \sum_{(i,j) \in \INNver} \left\vert\sin\left( \frac{\ver_{i,j+1}}{2}\right)-\sin\left( \frac{\ver_{i,j}}{2}\right) \right\vert^2  \\ &- 2\pi^2(1-2c_p)\eps^2 \mathcal{H}^0(A^{ver}), \end{split}
    \label{lower:small_spin_refomrulation_argument}
 \end{equation}
 where $A^{ver}= \{(i,j) \in \INNver\colon \max\{\vert\ver_{i, j}\vert,\vert\ver_{i, j+1}\vert\} \geq \pi/8\}$ and $(\hor, \ver) \colon \Omega \cap\eps\Z^2 \rightarrow [-\pi, \pi)^2$ is the associated angular velocity field of $u$. Analogously, the same is true for the horizontal energy contribution, replacing $A^{ver}$ by the corresponding set $A^{hor}.$\\
 
\quad \textbf{Step 2: Reduction to estimate on slices.}  
We claim that for all $i \in \N$ satisfying
\[
c_2 \geq i \eps \geq \frac{c_2}4 \min\left\{ 1, \frac{\eps}{\delta^{1/2}} \left( \left\vert \ln \left( \frac{\eps}{\delta^{1/2}}\right) \right\vert + 1\right) \right\}=: a(\eps,\delta)
\]
one of the following holds 
\begin{equation}\begin{split}
 \textup{(i)} & \  \eps \sum_{j\in \mathcal{J}_{N}} (1-\delta -\cos(\ver_{i,j}))^2 \geq c_2 \delta^2 \nonumber \\
 \textup{(ii)} & \ \eps \sum_{j\in \mathcal{J}_{N}} \max\{ \vert 1-\delta -\cos(\ver_{i,j})\vert, \vert 1-\delta -\cos(\ver_{i,j+1})\vert \} \left\vert\sin\left( \frac{\ver_{i,j+1}}{2}\right)-\sin\left( \frac{\ver_{i,j}}{2}\right) \right\vert  \geq \frac{c_2}{i\eps}\eps\delta^{3/2},\end{split}
 \end{equation}
 where $\mathcal{J}_N \coloneqq [0, 1) \cap \eps \Z$.
We will show first how to conclude from the claim. The rest of the proof will then be devoted to proving the claim.
 
By Young's inequality, (i) and (ii) from above, and \eqref{lower:small_spin_refomrulation_argument}, we have 
\begin{equation}
\Eiso(u,\Omega) \geq \eps \sum_{i\eps \geq a(\eps,\delta)} \min\left\{c_2\delta^2, \frac{c_2}{i\eps} \eps\delta^{3/2} \right\}- 2\pi^2(1-2c_p)\eps^2 \mathcal{H}^0(A^{ver}).\label{lower:ifnot}
\end{equation}
By Lemma \ref{lem:vortices-estimates} we estimate 
\[ 
\mathcal{H}^0(A^{ver}) \leq \frac{2\Eiso(u, \Omega)}{(1-\delta)^2 \eps^2 (1-\delta- \cos(\frac\pi8))^2} \leq \frac{4\cdot20^2c_1 s(\eps,\delta)}{\eps^2 }\leq  4\cdot20^2 c_1 \left( \frac{\delta}{\eps}\right)^2\leq 4 \cdot 20^2 c_1 \leq \frac12, 
\]
where we used $1-\delta - \cos(\pi/8) = \cos(\opt) -\cos(\pi/8)\geq \cos(\pi/16) - \cos(\pi/8) \geq 1/20$,  and $\delta \leq \eps$. Consequently, $\mathcal{H}^0(A^{ver}) = 0$. Similarly, one obtains $\mathcal{H}^0(A^{hor})=0$. Now, we discuss the cases $\delta^{1/2}\leq \eps$ and $\delta \leq \eps\leq \delta^{1/2}$ separately. \\

\quad \emph{\textbf{Step 2.1: $\delta^{1/2} \leq \eps$.}} In this case, inequality \eqref{lower:ifnot} reduces to 
\begin{equation}
\Eiso(u,\Omega) \geq \eps \sum_{c_2 \geq i\eps \geq a(\eps,\delta)} \min\left\{c_2\delta^2, \frac{c_2}{i\eps} \eps\delta^{3/2} \right\}\geq \frac{c_2}{2} \delta^2\left( c_2  - \frac{c_2}{4}- 2 \eps\right)\geq \frac{c_2^2}{8} \delta^2,
\end{equation}
where we used $a(\eps,\delta) = c_2/4$, $c_2\leq 1$, and $\eps\in (0, c_2/4).$\\

\quad \emph{\textbf{Step 2.2: $\delta \leq \eps\leq \delta^{1/2}$.}} By Remark \ref{rem:scaling_continous} and the assumption $\delta^{1/2} \leq \eta/4 = c_2 /4$, we have 
\begin{equation}
    \label{lower:spins:ieps:ifnot}  a(\eps,\delta) = \frac{c_2}{4}s(\eps,\delta)= \frac{c_2}{4} \frac{\eps}{\delta^{1/2}} \left( \ln\left( \frac{\delta^{1/2}}{\eps}\right) +1\right)\geq \frac{c_2}{4}\frac{\eps}{\delta^{1/2}} \geq \eps.
\end{equation}
It follows that
\begin{align*}
\Eiso(u,\Omega) \geq & \frac{\eps}2 \sum_{c_2 \geq  i\eps \geq a(\eps,\delta)} c_2  \min\left\{\delta^2, \frac{\eps \delta^{3/2}}{i\eps} \right\} \\ 
\geq & \frac{\eps}2 \sum_{c_2 \geq  i\eps \geq a(\eps,\delta)} c_2 \frac{\eps \delta^{3/2}}{i\eps} \\
\geq & c_2  \frac{\eps \delta^{3/2}}4 \int_{a(\eps,\delta) + \eps}^{c_2 } \frac1t \, dt \\
= & c_2  \frac{\eps \delta^{3/2}}4 \left( \ln(c_2) - \ln( a(\eps,\delta) + \eps) \right) \\
\geq &c_2  \frac{\eps \delta^{3/2}}4 \left( \ln(c_2) - \ln( 2 a(\eps,\delta) ) \right) \\
= & c_2  \frac{\eps \delta^{3/2}}4 \left( \ln(2) - \ln\left( \frac{\eps}{\delta^{1/2}} ( \ln ( \delta^{1/2} / \eps) + 1 ) \right) \right) \\
 = & \frac{c_2}{4} \eps\delta^{3/2} \left( \ln\left( 2\right) - \ln\left( \frac{\eps}{\delta^{1/2}}\left( \ln\left( \frac{\delta^{1/2}}{\eps}\right) +1\right)   \right)\right)\geq \frac{c_2}{4\cdot 50} \eps\delta^{3/2} \left(\left\vert \ln\left(\frac{\eps}{\delta^{1/2} }\right) \right\vert +1 \right) ,
\end{align*}
where we used $\ln(\vert\ln\sigma\vert+1) \leq \max\{ 24 \ln(2)/25, 3\vert\ln\sigma\vert/4\}$ and $\ln(2)\geq 1/2$ in the last estimate.\\

\quad \emph{\textbf{Step 3: Proof of the claim.}} Assume that (i) and (ii) do not hold for $i\eps$. Define the sets
\begin{equation}\begin{split}
\mathcal{M}_+ & \coloneqq  \left\{j\eps \in (0,1)\cap\eps\Z \colon \ver_{i, j} \in \left(\frac12 \opt, \frac32 \opt\right) \right\}  \text{\qquad and}\\
\mathcal{M}_- & \coloneqq   \left\{j\eps \in (0,1)\cap\eps\Z \colon \ver_{i,j} \in \left(-\frac32 \opt, -\frac12 \opt\right) \right\} .\end{split}
\end{equation}
Note that for $0 \leq \theta < \pi/3$ it holds
\begin{equation}\label{eq: est difference arccos}
|\arccos(1-\delta)^2 - \theta^2| = \left| \int_{\arccos(1-\delta)}^{\theta} \frac12 s \, ds \right| \leq \left| \int_{\arccos(1-\delta)}^{\theta} \sin(s) \, ds \right| = |1 -\delta - \cos(\theta) |.
\end{equation}
Similarly, the same estimate holds for $-\pi/3 < \theta \leq 0$.
Together with $\mathcal{H}^0(A^{ver}) = 0$ and the assumption that (i) does not hold it follows for the set $\mathcal{M}^c \coloneqq ((0,1)\cap\eps\Z)\setminus (\mathcal{M}_+ \cup \mathcal{M}_-)$ that
\begin{equation}\begin{split}
 \eps\mathcal{H}^0(\mathcal{M}^c) = &\eps \sum_{j\eps\in \mathcal{M}_c}1 \leq  \frac{16}{ 9\opt^4 } \eps \sum_{j\eps \in \mathcal{M}^c} (\opt^2-(\ver_{i,j})^2 )^2   \\
&  \leq \frac{4\cdot 16}{9 \opt^4 } \eps \sum_{j\eps \in \mathcal{M}^c} \left( 1-\delta -\cos(\ver_{i,j})\right)^2 \leq \frac{4\cdot 16 c_2\delta^2}{ 9\opt^4 }\leq \frac{64}{9} c_2\leq \frac13. \end{split}
\end{equation}
In particular, since $\eps \in (0,1/2)$
\begin{equation}
   \eps \mathcal{H}^0(\mathcal{M}_+ \cup \mathcal{M}_-)\geq \frac23-\eps \geq \frac16.
\end{equation}
Without loss of generality, we may assume $\eps \mathcal{H}^0(\mathcal{M}_+) \geq 1/12.$   
We introduce the piecewise affine  interpolation of $\ver_{i,\cdot}$ on $(0,1)$ by 
(see Figure \ref{fig:convex_versus_discrete})
\[ 
\theta_i(y) \coloneqq \frac{y-j\eps}{\eps} \ver_{i, j+1}+ \frac{(j+1)\eps -y}{\eps}\ver_{i,j}\quad \textup{ for } y\in [j\eps, (j+1)\eps).
\] 
Note, that $\theta_i(\cdot)$ is Lipschitz-continuous on $(0,1)$ and $\vert \theta_i\vert \leq \pi/3$ since $\mathcal{H}^0(A^{ver})=0$. 

\begin{figure}[h]
    \centering
    \includegraphics[scale = 0.35]{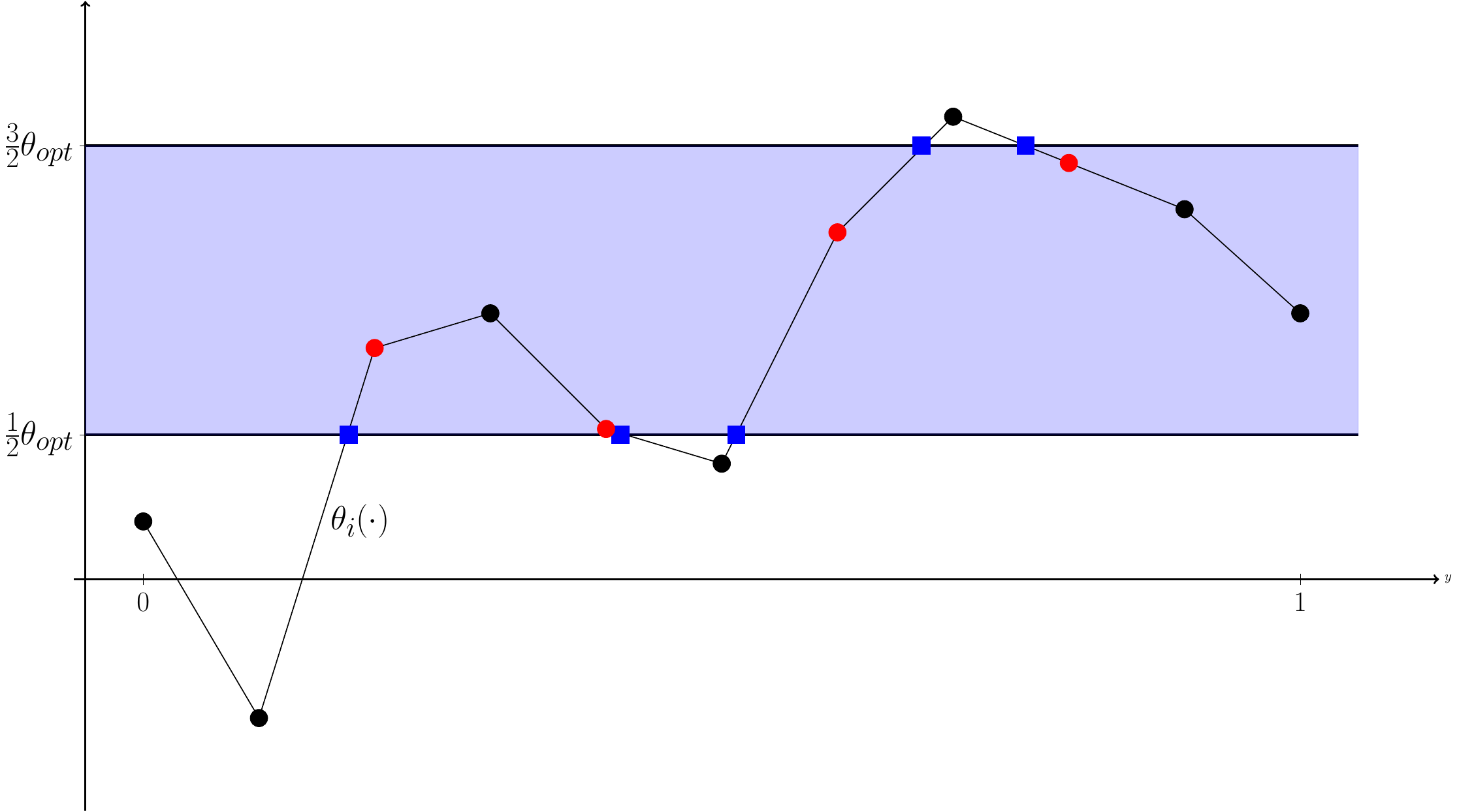}
    \caption{Sketch of the function $\theta_i$. The black dots are the values of $\ver_{i,\cdot}$, the red dots mark the discrete boundary points of $\mathcal{M}_+$, and the blue rectangles indicate where the interpolated function $\theta_i$ leaves the interval $(\opt/2,3\opt/2)$.}
    \label{fig:convex_versus_discrete}
\end{figure}
Furthermore, we introduce the function \[ 
F(\theta) \coloneqq \int_{-\frac{\pi}{3}}^\theta \vert  \opt^2-\gamma^2 \vert \textup{ d}\gamma  \quad \textup{ for }\theta \in [-\frac\pi3, \frac\pi3]. 
\] 
 By the Lipschitz continuity of $F\circ \theta_i$, the coarea formula (see \cite[Theorem 1 in Section 3.4.2]{EvansLawrenceC1992Mtaf}), estimate \eqref{eq: est difference arccos},  $ \vert s-t\vert/2\leq \vert\sin(s)-\sin(t)\vert $ for all $|s|, | t|\leq \pi/3$ and the fact that we assume that (ii) does not hold,  we estimate
\begin{equation}\begin{split}
& \int_\R \mathcal{H}^0\left(\partial \left\{ F\circ \theta_i >s  \right\} \right)  + \mathcal{H}^0(\partial \{F\circ \theta_i  <s   \} ) \textup{ d}s  \\
\leq &2 \int_0^1 \vert (F\circ \theta_i)^\prime(y) \vert \, dy \\
\leq &2\sum_j \int_{j\eps}^{(j+1)\eps} \vert \opt^2 -\theta_i^2(y)\vert \left\vert \frac{\ver_{i,j+1}-\ver_{i,j}}{\eps}\right\vert \textup{ d}y\\
\leq &2\sum_j \int_{j\eps}^{(j+1)\eps} \max\{ \vert \opt^2 - (\ver_{i,j})^2 \vert, \vert \opt^2 - (\ver_{i,j+1})^2 \vert\}  \left\vert \frac{\ver_{i,j+1}-\ver_{i,j}}{\eps}\right\vert \textup{ d}y\\ 
 \leq& 2\eps \sum_j \max\{\vert 1-\delta -\cos(\ver_{i,j+1})\vert, \vert 1-\delta -\cos(\ver_{i,j})\vert \}\left\vert \frac{\ver_{i,j+1}-\ver_{i,j}}{\eps}\right\vert \textup{ d}y\  \\
\leq &8 \sum_j \max\{\vert 1-\delta -\cos(\ver_{i,j+1})\vert, \vert 1-\delta -\cos(\ver_{i,j})\vert \}\left\vert \sin\left(\frac{\ver_{i,j+1}}{2}\right)-\sin\left(\frac{\ver_{i,j}}{2}\right)\right\vert \textup{ d}y \\ 
 \leq &\frac{8c_2\delta^{3/2}}{i\eps}.\end{split} \label{lower:spins:continuous:boundary_estimate}
\end{equation} 
  
Next, by Fubini, there exists $\hat{\theta}\in (0,\opt/4)$ such that 
\begin{align}
& \mathcal{H}^0\left(\partial\left\{ \theta_i >\frac{\opt}{2} - \hat{\theta}\right\}\right) + \mathcal{H}^0\left(\partial \left\{ \theta_i < \frac{3\opt}{2}+\hat{\theta}\right\}\right) \\
 & \quad \leq  \mathcal{H}^0\left(\partial\left\{ F\circ \theta_i > F\left( \frac{\opt}2 - \hat{\theta} \right)\right\}\right) + \mathcal{H}^0\left(\partial\left\{   F\circ \theta_i < F\left(\frac{3\opt}{2}+\hat{\theta}\right)\right\}\right) \label{eq: estimate superlevel}\\
 & \quad \leq \frac{8}{\opt} \int_0^{\frac14\opt}\mathcal{H}^0\left(\partial \left\{ F\circ \theta_i >F\left(\frac{\opt}{2}-s\right)  \right\} \right) \mathcal{H}^0\left(\partial \left\{F\circ \theta_i  <F\left( \frac{3\opt}{2}+s\right)  \right\} \right) \textup{ d}s.\end{align}
 Consider the functions
 \[g_1(s) \coloneqq F\left( \frac{\opt}{2}-s\right) \quad \textup{ and } \quad g_2(s)\coloneqq F\left(\frac{3\opt}{2}+s\right).\quad \] 
 Then, $\min\{  |g_1^\prime (s)| , |g_2^\prime (s)| \}\geq 3\opt^2/4 $ for all $s\in (0, \opt/4).$
Combining \eqref{lower:spins:continuous:boundary_estimate}, \eqref{eq: estimate superlevel} and the estimate $ \delta^{1/2} \leq\arccos(1-\delta) = \opt$ for $\delta \in (0,1)$, we obtain 
\begin{equation}\begin{split}
&\mathcal{H}^0\left(\partial\left\{ \theta_i >\frac{\opt}{2} - \hat{\theta}\right\}\right) + \mathcal{H}^0\left(\partial \left\{ \theta_i < \frac{3\opt}{2}+\hat{\theta}\right\}\right)\\
\leq & \frac{8}{\opt} \int_0^{\frac14\opt}\mathcal{H}^0\left(\partial \left\{ F\circ \theta_i > g_1(s)  \right\} \right)\textup{ d}s + \frac{8}{\opt} \int_0^{\frac14\opt} \mathcal{H}^0\left(\partial \left\{F\circ \theta_i  < g_2(s)  \right\} \right) \textup{ d}t\\
 \leq &\frac{8}{\opt} \left(\int_{F\left(\frac14\opt\right)}^{F\left(\frac12\opt\right)}\frac{4\mathcal{H}^0\left(\partial \left\{ F\circ \theta_i >s  \right\} \right)}{3\opt^2}\textup{ d}s  + \int_{F\left(\frac32\opt\right)}^{F\left(\frac74\opt\right)}\frac{4\mathcal{H}^0(\partial \{F\circ \theta_i  <s   \} )}{3\opt^2} \textup{ d}s\right) \\
\leq &\frac{32}{3\opt^3 }  \int_\R \mathcal{H}^0\left(\partial \left\{ F\circ \theta_i >s  \right\} \right)  + \mathcal{H}^0(\partial \{F\circ \theta_i  <s   \}  \textup{ d}s\leq \frac{256c_2 }{i\eps}.\end{split}\label{lower:number_boundary}
\end{equation}
Hence, the set 
\begin{equation}
  M_+^{\hat{\theta}}\coloneqq   \left\{y\in (0,1) \colon \frac{\opt}{2}-\hat{\theta}\leq \theta_i(y) \leq \frac{3\opt}{2}+\hat{\theta}\right\} 
\end{equation}
has at most  $\mathcal{K}(i\eps) \leq 256 c_2/(i\eps)$ boundary points in $(0,1)$. We want to show that the number of discrete boundary points $\partial^{d, \eps} \mathcal{M}_+^{\hat{\theta}}$ of the set
\begin{equation}
   \mathcal{M}_+^{\hat{\theta}}\coloneqq    M_+^{\hat{\theta}} \cap \eps \Z =  \left\{j\eps\in (0,1)\cap \eps\Z \colon \frac{\opt}{2}-\hat{\theta}\leq \ver_{i,j} \leq \frac{3\opt}{2}+\hat{\theta}\right\}    
\end{equation}
is bounded by $\mathcal{K}(i\eps)$. If $\mathcal{H}^0(\partial M_+^{\hat{\theta}}) = 0$ we either have $\theta_i \in (\opt/2 - \hat{\theta}, \opt/2 + \hat{\theta})$ or $\theta_i \notin (\opt/2 - \hat{\theta}, \opt/2 + \hat{\theta})$ which implies the same property for $\ver_{i, \cdot}$ in $(0,1) \cap \eps\Z.$ Thus, we have $\mathcal{H}^0(\partial^{d, \eps} \mathcal{M}_+^{\hat{\theta}}) = 0$. If $\mathcal{H}^0(\partial M_+^{\hat{\theta}}) = 1$, there is $t \in (0,1)$ such that either $(0, t) = M_+^{\hat{\theta}}\setminus \{ t\} $ or $(t, 1) = M_+^{\hat{\theta}}\setminus\{t\} $. Hence, there is $l\eps \in (0,1)\cap\eps\Z$ such that either $(0, l\eps) \cap\eps\Z = M_+^{\hat{\theta}}\setminus \{l \eps\} $ or $(l\eps, 1)\cap\eps\Z = M_+^{\hat{\theta}}\setminus\{l\eps\}$, which yields $\mathcal{H}^0(\partial M_+^{\hat{\theta}}) = 1$. Finally, let us turn to the case $M\coloneqq \mathcal{H}^0(\partial M_+^{\hat{\theta}}) \geq  2$. Let $0 \eqqcolon y_0 < y_1 < \dots < y_M <  y_{M+1} \coloneqq 1$ be the boundary points of $M_+^{\hat{\theta}}$ and $l = 0, \dots, M-1$. By the definition of $M_+^{\hat{\theta}}$ we either have $(y_l, y_{l+1}) \subseteq M_+^{\hat{\theta}}\setminus\{ y_l, y_{l+1}\} $ and $(y_{l+1}, y_{l+2}) \subseteq (M_+^{\hat{\theta}})^c\setminus\{ y_{l+1}, y_{l+2}\} $ or vice versa. Therefore, $(y_l, y_{l+1})\cap\eps\Z \subseteq \mathcal{M}_+^{\hat{\theta}}\setminus\{ y_l, y_{l+1}\} $ and $(y_{l+1}, y_{l+2})\cap\eps\Z \subseteq (\mathcal{M}_+^{\hat{\theta}})^c\setminus\{ y_{l+1}, y_{l+2}\} $ holds. If $(y_l, y_{l+1}] \cap \mathcal{M}_+^{\hat{\theta}}= \emptyset$ we have $\partial^{d, \eps} \mathcal{M}_+^{\hat{\theta}} \cap (y_l, y_{l+2}) = \emptyset.$ Otherwise, we have $\mathcal{K}(i\eps) \coloneqq \mathcal{H}^0(\partial^{d, \eps} \mathcal{M}_+^{\hat{\theta}} \cap (y_l, y_{l+2})) = 1$, and thus obtain 
\begin{equation}
    \mathcal{H}^0(\partial^{d, \eps} \mathcal{M}_+^{\hat{\theta}})\leq \mathcal{H}^0(\partial M_+^{\hat{\theta}})\leq \mathcal{K}(i\eps) \leq \frac{256 c_2}{i\eps}.
\end{equation}
Hence, there exist pairwise disjoint, non-empty discrete intervals  of the form $I_l \coloneqq [k_l \eps, m_l\eps] \cap \eps \Z$,  for $l = 0, \dots, \mathcal{K}(i\eps)$ and $k_l, m_l \in \N$, satisfying 
\[ \bigcup_{l=1}^{\mathcal{K}(i\eps)} I_l \subseteq \{ j\eps \in (0,1) \cap \eps \Z: \frac{\opt}2 - \hat{\theta} \leq  \ver_{i,j} \leq \frac{3 \opt}2 + \hat{\theta} \} \quad   \textup{ and }\quad \eps \mathcal{H}^0\left( \bigcup_{l=1}^{\mathcal{K}(i\eps)} I_l \right) \geq \frac{1}{12}.
\]

Further, we introduce a discrete potential $u \colon \Omega \cap\eps\Z^2 \rightarrow \R$ of the angular velocity field $(\hor,\ver)$ via 
\begin{equation}
    u(i\eps,j\eps)\coloneqq \frac{\eps}{\opt} \sum_{k=0}^{i-1}\hor_{k,j} \quad \textup{ for } (i\eps, j\eps) \in \lbrack \eps, 1)\cap\eps\Z^2\nonumber 
\end{equation}
and $u(0, \cdot) = 0.$
Since $\mathcal{H}^0(V) \leq \mathcal{H}^0(A^{ver})+ \mathcal{H}^0(A^{hor}) =0$ for the set $V$ of vortices of $(\hor,\ver)$, we have
\begin{equation}
\partial_1^{d,\eps}u(i\eps,j\eps) = \frac{1}{\opt} \hor_{i,j} \quad \textup{ and }\quad  \partial_2^{d,\eps} u(i\eps,j\eps) = \frac{1}{\opt} \ver_{i,j}. 
\end{equation}
Furthermore, we have $\partial_2^{d,\eps} u(i\eps, \cdot) \geq 1/4$ for all $j\eps \in  I_l\cap\eps\Z$ and $l=1, \dots, \mathcal{K}(i\eps).$ 
Consider the piecewise affine interpolation $\tilde{u} \colon [0,1) \rightarrow \R$ given by 
\[ \tilde{u}(y) \coloneqq \frac{y-j\eps}{\eps}u(i\eps, (j+1)\eps) + \frac{(j+1)\eps -y}{\eps}u(i\eps,j\eps) \quad \textup{ for } y\in [j\eps, (j+1)\eps)\] for $j\eps \in [0,1-\eps) \cap\eps\Z.$  The function $\tilde{u}$ satisfies $\partial_2 \tilde{u} = \partial_2^{d,\eps} u\geq 1/4$ for all $y\in \cup_{l=1}^{\mathcal{K}(i\eps)} I_l.$ Using H\"older's inequality  we find for all $l= 1, \dots, \mathcal{K}(i\eps)$
\begin{equation}
\frac{1}{12\cdot 16} \vert I_l\vert^3  \leq \int_{I_l} \vert \tilde{u}(y) \vert^2 \textup{ d}y \leq 2\eps \sum_{j\eps\in I_l}\left(  \eps \sum_{k=0}^{i-1} \vert \partial_1^{d,\eps}u(i\eps,j\eps) \vert \right)^2\leq 2\eps^2 \sum_{j\eps \in I_l} \sum_{k=0}^{i-1} \left\vert \frac{\hor_{i,j}}{\opt}\right\vert^2\cdot i\eps.
\end{equation}
Summing over $l$ leads to 
\begin{equation}\begin{split}
\frac{(i\eps)^2}{(256c_2)^2 16\cdot 12^4}   \leq &\frac{1}{(\mathcal{K}(i\eps))^2 16\cdot 12\cdot 12^3} \\ = &\frac{\mathcal{K}(i\eps)}{16\cdot 12} \left( \frac{1}{\mathcal{K}(i\eps)} \sum_{l=1}^{\mathcal{K}(i\eps)} \vert I_l\vert\right)^3  \\ \leq &\frac{1}{16\cdot 12} \sum_{l=1}^{\mathcal{K}(i\eps)} \vert I_l\vert^3 
\\  \leq &\sum_{l=1}^{K} \int_{I_l} \vert \tilde{u}(y)\vert^2 \textup{ d}y \\
\leq &2\eps \sum_{l=1}^K\sum_{j\eps \in I_l}\left( \eps \sum_{k=0}^{i-1}\left\vert \frac{\hor_{k,j}}{\opt}\right\vert^2\right)  i\eps\\    \leq &\frac{2\eps^2}{\opt^2}\left( \sum_{j\in \mathcal{J}_{NN}} \sum_{k=0}^{i-1} \vert \opt^2 - (\hor_{k,j})^2\vert \cdot i\eps\right)  + 2 (i\eps )^2.\end{split}\end{equation}
Using \eqref{eq: est difference arccos} we find
    \begin{align}
\frac{(i\eps)^2}{(256c_2)^2 16\cdot 12^4}  &\leq \frac{2\eps^2}{\opt^2}\left( \sum_{j\in \mathcal{J}_{NN}}\sum_{k=0}^{i-1} \vert \opt^2 - (\hor_{k,j})^2\vert \cdot i\eps\right)  + 2 (i\eps )^2\\
&\leq  \frac{2\eps^2}{\opt^2} \sum_{j\in \mathcal{J}_{NN}} \sum_{k\leq i} \vert 1-\delta - \cos(\hor_{k,j})\vert \cdot i\eps   + 2 (i\eps )^2 \\ &\leq 2 \left( \frac{\Eiso(u, \Omega)}{\opt^4}\right)^{1/2} (i\eps)^{3/2} + 2(i\eps)^2\\
& \leq  2 \left( \frac{\Eiso(u, \Omega)}{\delta^2}\right)^{1/2} (i\eps)^{3/2} + 2(i\eps)^2,\end{align}
where we used $\delta^{1/2}\leq \arccos(1-\delta)$ for $\delta \in (0,1)$ in the last line.
Since $(256c_2)^2 16\cdot 12^4 = 1/4$ we obtain  
\begin{equation}
    \frac{(i\eps)^2}{2(256c_2)^2 16\cdot 12^4} =  \frac{(i\eps)^2}{(256 c_2)^2 16\cdot 12^4}-2(i\eps)^2\leq 2 \left( \frac{\Eiso(u, \Omega)}{\delta^2}\right)^{1/2} (i\eps)^{3/2}.\label{lemma:lower:spins_almost_last_estimate}
\end{equation}
Rearranging  \eqref{lemma:lower:spins_almost_last_estimate}, using $\Eiso(u, \Omega) \leq c_1 s(\eps,\delta)$ and $4(4\cdot 256^2\cdot 16 \cdot 12^4)^2 c_2^3 c_1  = 1$ yields that $i\eps \leq a(\eps,\delta) $. Hence, (i) or (ii) is satisfied for $i\eps \geq a(\eps,\delta)$.
 \end{proof}
 \begin{remark}\label{rem:lowerbound2}
 Careful inspection of the proof shows that the lower bound from Lemma \ref{lem:lower_bound_no_vortices} holds whenever the angular velocity field $(\hor, \ver)$ of a minimizer $u \in \Sbound$ is curl-free.  
 Recalling that the number of vortices of $u$  is bounded by
 \begin{equation}
 \mathcal{H}^0(V) \leq 64\frac{\Eiso(u, \Omega)}{\eps^2}=64\frac{\min\Eiso}{\eps^2}.
 \label{rem:lower:spins:curl_freeness}  
 \end{equation}
 Hence, if the right-hand side of \eqref{rem:lower:spins:curl_freeness} is smaller than $1$ then $(\hor, \ver)$ is curl-free. Since $\frac{\min\Eiso}{\eps^2}\lesssim \min\left\{ \left(\frac{\delta}{\eps}\right)^2, \frac{\delta^{3/2}}{\eps}\left(\left\vert \ln\left(\frac{\eps}{\delta^{1/2}}\right) \right\vert + 1\right)\right\}$ this proves that a respective lower bound holds for a larger range of parameters than stated in Lemma \ref{lem:lower_bound_no_vortices}. 
 \end{remark}
 \begin{remark}[Existence of vortices] If a minimizer $u\in \Sbound$ is curl-free, we can use the strategy of Lemma \ref{lem:lower_bound_no_vortices} to obtain the same lower bound
 \[ \min\left\{ \delta^2 , \eps\delta^{3/2} \left( \left\vert \ln \left( \frac{\eps}{\delta^{1/2}}\right) \right\vert +1\right) \right\}\lesssim \Eiso(u, \Omega) \lesssim s(\eps, \delta) \leq \eps\delta^{1/2}.\] Fixing $\delta \in (0,1)$ it holds for $\eps \ll \delta$ that $\eps\delta^{1/2} \ll \min\{ \delta^2, \eps \delta^{3/2} \left( |\ln \eps/\delta^{1/2}| + 1 \right)\}$. Hence, there must be minimizers that use vortices to lower their energy.
 \end{remark}

\subsection{Scaling law for the periodic problem} 
In this section, we discuss the periodic problem
\begin{equation}
    \min\{ \Eiso(u, \Omega) \colon u \in \Sboundper\},\label{def:periodic_problem_discrete}
\end{equation}
where $\Sboundper$ consists of all spin fields  $u\in \Sbound$ that additionally satisfy the periodicity condition \eqref{gamma:spins:periodic_boundary}.
Note that 
\[ \argmin_{\Sboundper}(\Eiso(\cdot, \Omega))= \argmin_{\Sboundper}I_{\alpha,\eps}.\]
We prove the following partial scaling law. 
	\begin{theorem}[Scaling Law of the discrete model with periodic boundary conditions]\label{thm:scaling_per} Consider the function $s\colon (0,1/2)\times(0,1) \rightarrow (0,\infty)$ given by
		\[ s(\eps,\delta) \coloneqq \min\left\{ \delta^2, \eps\delta^{3/2} \left( \left\vert \ln\left( \frac{\eps}{\delta^{1/2}}\right) \right\vert +1\right), \eps\delta^{1/2}\right\}. \] 
	Then following statements are true for $\delta := \dver = \dhor$:
	\begin{itemize}
	    \item[(i)] There is a constant $C > 0$ such that for all $(\eps, \delta) \in (0,1/2) \times (0,1)$ there holds
	    \[ \min \left\{ \Eiso(u; \Omega) \colon u \in \Sboundper \right\} \leq C s(\eps,\delta). \]
	    \item[(ii)] There is $\overline{\delta}\in (0,1)$ and a constant $c > 0$, only depending on $\overline{\delta},$ such that for all\\ $(\eps, \delta) \in (0,1/2) \times [\overline{\delta}, 1)$ it holds
	\begin{equation*}
	   c \eps\delta^{1/2} \leq \min \left\{ \Eiso(u; \Omega) \colon u \in \Sboundper \right\}
	\end{equation*}
	and for all $\delta\in (0, \overline{\delta})$ and $\eps \in [\delta, 1/2)$ it holds
 \[ c \min\left\{ \delta^2, \eps\delta^{3/2} \left( \left\vert \ln\left( \frac{\eps}{\delta^{1/2}}\right) \right\vert +1\right)\right\} \leq  \min \left\{ \Eiso(u; \Omega) \colon u \in \Sboundper \right\}. \] 
	\end{itemize}
   \end{theorem}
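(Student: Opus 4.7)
My plan is to reduce Theorem \ref{thm:scaling_per} to the already-established Theorem \ref{thm:scaling} by exploiting the inclusion $\Sboundper \subseteq \Sbound$ for the lower bound, and by modifying the three competitors from the proof of Proposition \ref{lem:upper} to respect the extra periodicity constraint \eqref{gamma:spins:periodic_boundary} without increasing the energy by more than a universal multiplicative factor.

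For part (ii), the inclusion $\Sboundper \subseteq \Sbound$ gives
\[
\min_{u \in \Sboundper} \Eiso(u, \Omega) \ge \min_{u \in \Sbound} \Eiso(u, \Omega),
\]
and Proposition \ref{prop:lower} immediately supplies the claimed lower bounds in each of the parameter regimes listed. No further work is needed here.

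For part (i), I would revisit each of the three constructions of Proposition \ref{lem:upper} in turn. The ferromagnetic competitor $u^{(1)} \equiv (0,1)^T$ is trivially admissible in $\Sboundper$ since all nearest-neighbor scalar products equal $1$, giving $\Eiso(u^{(1)}, \Omega) \le 4\delta^2$ as before. For the branching-type competitor $u^{(2)}$ I would perform a horizontal symmetrization: carry out the branching-and-mollification construction on the left half $[0,1/2]$, and glue on $[1/2,1]$ a reflected copy (with the sign of the vertical angular velocity flipped, which preserves the discrete curl-free property). Since the original field vanishes on a neighborhood of $\{0\}\times(0,1)$, the symmetrized field also vanishes on a neighborhood of $\{1\}\times(0,1)$, so $\cos(\hor_{0,k}) = 1 = \cos(\hor_{\lfloor 1/\eps\rfloor-1, k})$; vertical periodicity is inherited from the $1$-periodicity of the building block in $y$, using that the relevant boundary values lie in $\{-\opt,0,\opt\}$, all with cosine in $\{1,1-\delta\}$. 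For the vortex competitor $u^{(3)}$, I would place the vortex strip in an interior subregion $[a\eps,(a+M)\eps]\times[0,1]$ with $a\ge 1$ and set $\hor=\ver=0$ on the outer strips $[0,a\eps]\times[0,1]$ and $[(a+M)\eps,1]\times[0,1]$. This preserves the Dirichlet datum $u(0,\cdot)=(0,1)^T$ and automatically yields horizontal periodicity, while vertical periodicity follows from the $2M\eps$-periodicity of the block. In each case the additional interface (between the two halves of $u^{(2)}$, or between the vortex strip and the ferromagnetic strips for $u^{(3)}$) costs at most $O(\eps\delta)$ in energy, which is subleading with respect to the target bounds $\delta^2$, $\eps\delta^{3/2}(|\ln(\eps/\delta^{1/2})|+1)$ and $\eps\delta^{1/2}$, respectively.

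The main technical obstacle is a commensurability issue: the side length $1$ of $\Omega$ need not be an integer multiple of the natural $y$-period of the building blocks ($1$ for the branching block, $2M\eps$ for the vortex block), so the restriction of a periodic field to $\Omega\cap\eps\Z^2$ does not automatically verify $\cos(\ver_{k,0})=\cos(\ver_{k,\lfloor 1/\eps\rfloor-1})$. I would resolve this by inserting a short transition layer of height $O(\eps)$ near $y=1$ which smoothly interpolates between the last discrete row of the periodic pattern and the first row, with cost comparable to the surface-type energy of a single period (already paid by the construction). Careful bookkeeping then verifies that the modified angular velocity field is still discretely curl-free on $\Omega\cap\eps\Z^2$ (so that it lifts to a genuine spin field) and that both the Dirichlet and the periodicity conditions are met; the scaling bounds of Proposition \ref{lem:upper} survive this modification up to a universal constant, which concludes (i).
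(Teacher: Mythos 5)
Your reduction of the lower bound to Proposition \ref{prop:lower} via $\Sboundper\subseteq\Sbound$ and the ferromagnetic competitor both match the paper and are correct. However, the two remaining constructions as you describe them do not define admissible spin fields, because they violate the discrete curl condition at the gluing interfaces -- a structural obstruction, not just an energy cost.

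\emph{Branching.} You propose gluing a reflected copy on $[1/2,1]$ with the sign of $\ver$ flipped, claiming this ``preserves the discrete curl-free property.'' It does so in the interior of each half, but not at the reflection axis $\{x=1/2\}$. There the vertical velocity jumps from $\ver(1/2,y)$ to $-\ver(1/2,y)$, and at the plaquettes straddling the axis the discrete curl evaluates to $-2\ver(1/2,y)=\mp 2\opt$, which for small $\delta$ is neither $0$ nor $\pm 2\pi$. (Flipping $\hor$ instead of $\ver$ would avoid a curl defect at the axis, since the jump set is vertical and $\partial_2\hor$ of a step constant in $y$ produces no singular part, but even then one still has to verify $\cos(\hor_{0,k})=\cos(\hor_{\lfloor 1/\eps\rfloor-1,k})$, which requires more than you state.) The paper avoids symmetrization entirely: it modifies the field only on the strip $[1-2\lambda,\infty)\times\R$ by replacing $\hor$ with $0$ and leaving $\ver=\pm\opt$ unchanged, so $\partial_1\ver$ has no singular part across the interface and the curl stays $0$; the mollification then gives $\hor=0$ on both discrete boundary columns.

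\emph{Vortex.} Placing the vortex block in an interior strip $[a\eps,(a+M)\eps]\times[0,1]$ and setting $\hor=\ver=0$ on both outer strips breaks down at the right boundary of the strip. The building block transitions from $(0,0)$ at its left edge to $(\opt,-\opt)$ at its right edge, so at the plaquettes in column $a+M$ one computes $\eps\curl=\ver_{a+M+1,j}-\ver_{a+M,j}-\hor_{a+M,j+1}+\hor_{a+M,j}=0-(-\opt)-\opt+\opt=\opt\notin\{0,\pm 2\pi\}$. To return from the helimagnetic state to zero you need a second, mirrored vortex layer, not a simple interface; the claimed $O(\eps\delta)$ interface cost misses this, and the correct cost for the return layer is again $O(\eps\delta^{1/2})$ (still admissible, but via vortices, not a cheap interface). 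The paper instead keeps the vortex layer on the left boundary, extends by $(-\opt,\opt)$ to $[0,1/2]^2$, and then \emph{reflects} horizontally and vertically over the mid-lines of the square, explicitly verifying that each reflected plaquette has curl equal to minus the original and hence still in $\{0,\pm 2\pi\}$. The vague ``short transition layer near $y=1$'' in your plan is likewise replaced in the paper by the vertical reflection, which handles the commensurability issue without an ad hoc patch.
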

   \begin{proof}
      The lower bound follows immediately from the lower bound on $\Sbound$. For the upper bound we need to adjust the construction in such a way that the associated spin fields satisfy the periodicity condition \eqref{gamma:spins:periodic_boundary}. The constructions are very similar to the ones used in Proposition \ref{lem:upper}, and we sketch them only for the readers' convenience.\\

      \quad \textbf{Ferromagnet.} As in the proof of Proposition \ref{lem:upper} we choose $u^{(1)} \coloneqq (0,1)^T$ on $\Omega \cap \eps \Z^2.$ This spin field satisfies \eqref{gamma:spins:periodic_boundary} and  $\Eiso(u^{(1)}) \leq 4\delta^2. $\\

      \quad \textbf{Branching-type construction. } We follow the lines of the proof of the upper bound of Proposition \ref{lem:upper}. Recall Steps 2 and 3 from the branching-type construction. Let $N\in \N$ and $(\Psi_N^{hor}, \Psi^{ver}_N)\colon \R^2 \rightarrow [-\pi, \pi)^2$ with $\eps\curl((\Psi_N^{hor}, \Psi^{ver}_N)) =0$ on $\R^2\cap\eps\Z^2.$ Let $\lambda \in (0, 2^{-N}]$. We slightly adjust the previous construction by setting (see Fig.~\ref{fig:enter-label} for a sketch)
      \begin{equation}
          (\Phi_{N}^{hor}, \Phi_{ N}^{ver}) \coloneqq \begin{cases}
              (\Psi^{hor}_N, \Psi^{ver}_N) \quad & \textup{ on } (-\infty, 1-2\lambda]\times \R,\\
              (0, -\opt) \quad & \textup{ on } [1-2\lambda,\infty) \times [1/2, \infty),\\
              (0, \phantom{-}\opt) \quad & \textup{ on } [1-2\lambda,\infty) \times (-\infty, 1/2]
          \end{cases}
      \end{equation}
      Note that $(\Phi_{N}^{hor}, \Phi_{ N }^{ver})$ again satisfies $\eps \curl((\Phi_{N}^{hor}, \Phi_{ N}^{ver})) = 0$ on $\R^2 \cap \eps \Z^2.$ 
      \begin{figure}[h]
    \centering
    \includegraphics[scale = 0.35]{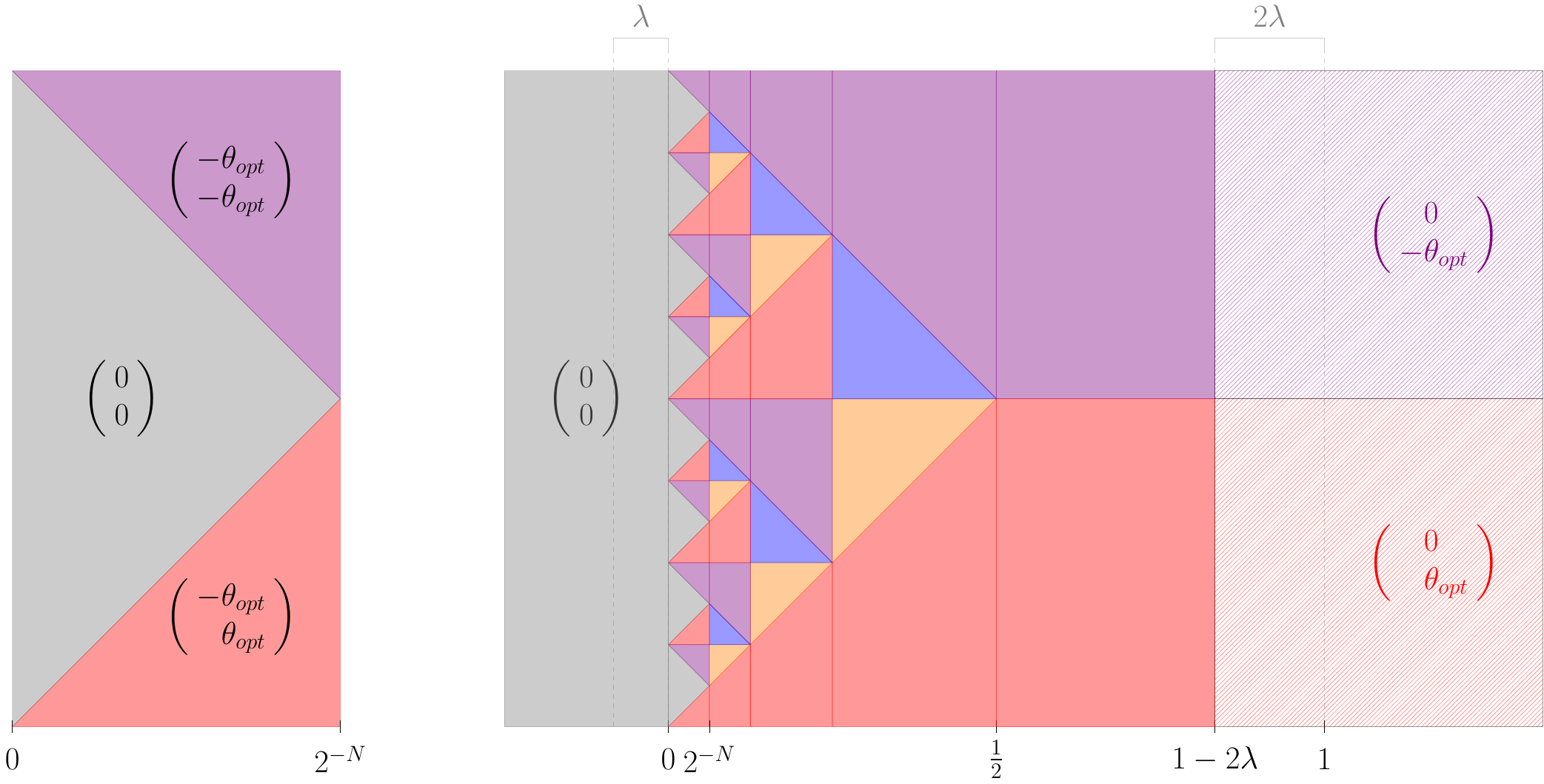}
    \caption{Sketch of $(\Phi_{N}^{hor}, \Phi_{ N }^{ver})$.  A corresponding spin field satisfies \eqref{gamma:spins:periodic_boundary}.}
    \label{fig:enter-label}
    \end{figure}
    We continue by mollifying $(\Phi_{N}^{hor}, \Phi_{ N}^{ver})$ with a suitable convolution kernel, i.e., let $p_\lambda\colon \R \rightarrow \R$ be a standard mollifier satisfying \eqref{prop:mollifier} and consider  $(\Psi_{N,\lambda}^{hor}, \Psi_{N, \lambda}^{ver})\colon \R^2 \rightarrow [-\pi, \pi)^2$, given for $(x,y) \in \R^2$ by (c.f. Figure \ref{fig:smooth_spins})
  \begin{equation}
  \begin{array}{rcl}
  \Psi_{N,\lambda}^{hor} (x,y) &\coloneqq &\int\limits_{\R^2} p_{\lambda}(s) p_\lambda(t) \Phi_N^{hor}(x-\lambda-s, y-t) \textup{ d}s \textup{d}t\\
    \Psi_{N,\lambda}^{ver} (x,y) &\coloneqq & \int\limits_{\R^2} p_{\lambda}(s) p_\lambda(t) \Phi_N^{ver}(x-\lambda-s, y-t) \textup{ d}s \textup{d}t.
  \end{array}
  \end{equation}
Similarly to the proof of Proposition \ref{lem:upper}, we set
  \[ (\theta_{N,\lambda}^{hor}, \theta_{N,\lambda}^{ver}) \coloneqq (\Psi_{N,\lambda}^{hor}, \Psi_{N, \lambda}^{ver}) \quad \textup{ on } \Omega \cap \eps \Z^2 \] and note that $(\theta_{N,\lambda}^{hor}, \theta_{N,\lambda}^{ver})$ induces a spin field $u^{(2)} \in \Sboundper$. 
Estimating the energy, we proceed analogously as for the branching-type construction in the proof of Proposition \ref{lem:upper}, and take care of the additional energy contributions from the extra transition layer that near $\{1\}\times(0,1)$, which yields
\begin{equation}
   \Eisohor(u^{(2)}_{N,\lambda}, \Omega) \lesssim \delta^2 \cdot 2^{-N} + N\delta^2\lambda +  N \frac{\eps^2 \delta}{\lambda} .
\end{equation}
As before one chooses 
  \begin{equation*}
      N \coloneqq \left\lceil\left\vert \frac{ \ln \left( \frac{\eps}{\delta^{1/2}}\right)}{\ln(2)}\right\vert \right\rceil \quad  \textup{   and   }\quad  \lambda \coloneqq \frac{\eps}{2\delta^{1/2}}\in (0,2^{-N})\subseteq(0,1),
  \end{equation*}
  to obtain the desired bound
\begin{equation}
    \Eiso(u^{(2)}_{N, \lambda} , \Omega)\lesssim \eps\delta^{3/2} \left(\left\vert \ln \left( \frac{\eps}{\delta^{1/2}} \right) \right\vert + 1\right).
\end{equation}
\text{ }

\quad \textbf{Vortex structure.} Again, we proceed as for the non-periodic case in Proposition \ref{lem:upper}. Recall the definition of $(\Psi^{hor}, \Psi^{ver})\colon \Omega \cap \eps\Z^2\rightarrow [-\pi, \pi)^2$ in Steps 1 and 2 there (see Figure \ref{fig:building_block_vortices}).
 We define the angular velocity field $(\theta_3^{hor}, \theta^{ver}_3)\colon \Omega \cap \eps \Z^2 \rightarrow [-\pi, \pi)^2$  first on $[0,1/2]^2$ and then extend it to $\Omega\cap\eps\Z^2$ by repeated reflections. We again let $M\coloneqq \lfloor \pi /\opt\rfloor\geq 2$ and set
\begin{equation}
    (\theta_3^{hor}, \theta^{ver}_3) \coloneqq \begin{cases}
        (\Psi^{hor}, \Psi^{ver}) \quad & \textup{ on } [0, \min\{M\eps, 1/2\})\times [0,1/2],\\
        (-\opt, \opt) \quad & \textup{ on } [\min\{M\eps, 1/2\}, 1/2] \times [0, 1/2],
    \end{cases}
\end{equation}
We proceed by reflecting the construction (horizontally ) over $\{ \max([0, 1/2]\cap \eps\Z)\} \times [0, 1/2)$ and afterwards (vertically) over $[0,1) \times \{ \max([0, 1/2]\cap \eps \Z)\}$, see Figure \ref{fig:vortex_structure_per}.\\
\begin{figure}[ht!]
    \centering
    \includegraphics[scale = 0.07]{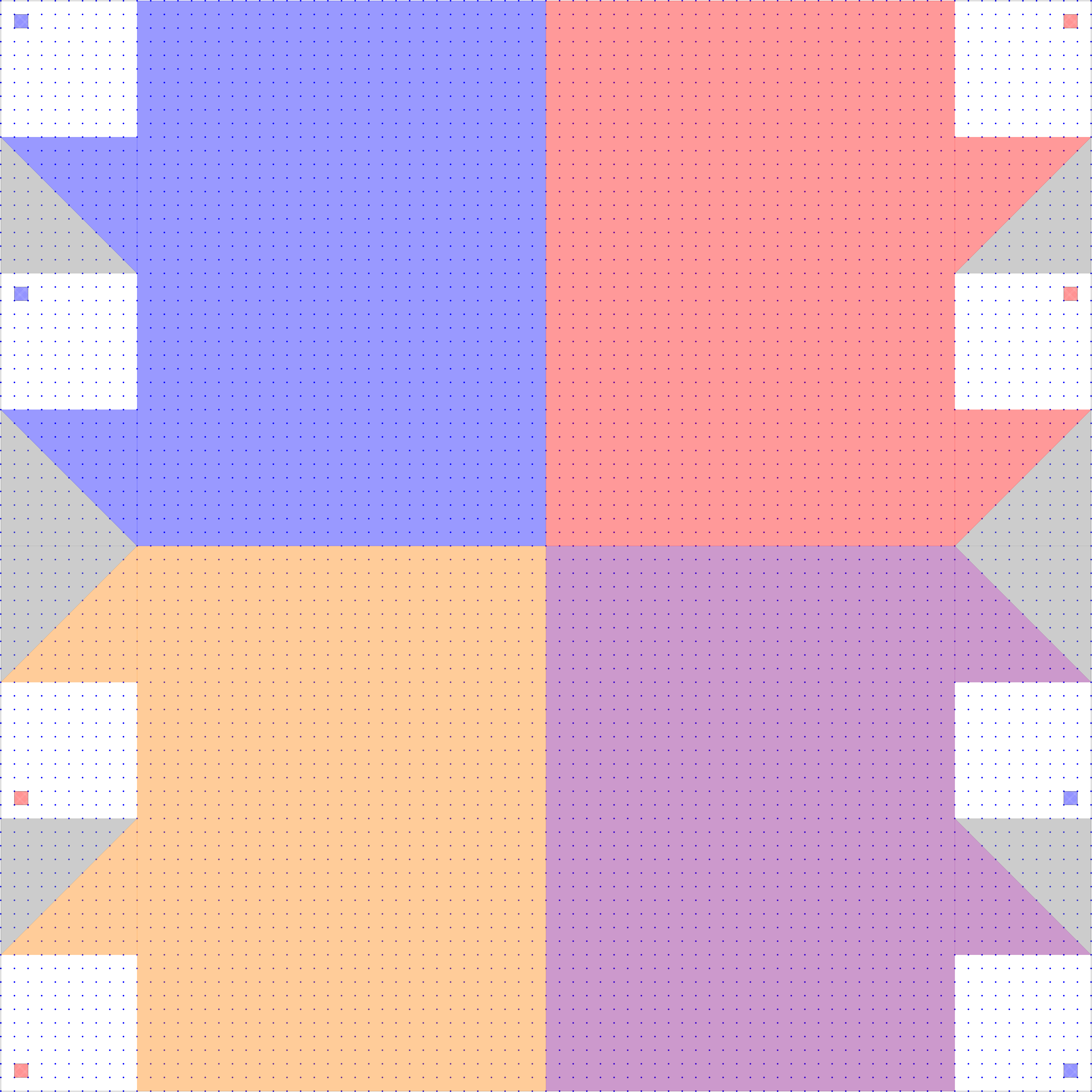}
    \caption{Sketch of the construction $(\theta_3^{hor}, \theta^{ver}_3)$ satisfying a periodicity condition. }
    \label{fig:vortex_structure_per}
\end{figure}

Let $k_{hor} \coloneqq \lfloor 1/(2\eps)\rfloor$. We extend $(\theta_3^{hor}, \theta^{ver}_3)$ to $([0,1)\times [0, 1/2])\cap\eps\Z^2$ by setting 
\begin{equation} 
\theta_{3,k_{hor} + i, j}^{hor}\coloneqq -\theta_{3,k_{hor} - i, j}^{hor}  \quad \textup{ and } \quad \theta^{ver}_{3, k_{hor}+ i, j} \coloneqq \theta^{ver}_{3, k_{hor}- i, j} \quad \textup{ for } (i\eps, j\eps) \in [0, 1/2) \times [0, 1/2].
\end{equation}
Note that this extension still satisfies a discrete curl condition on $([1/2, 1)\times [1/2, 1))\cap \eps\Z^2$ since 
\begin{equation}
    \eps \curl((\theta_3^{hor}, \theta^{ver}_3)_{k_{hor}+ i, j}) =  - \eps \curl((\theta_3^{hor}, \theta^{ver}_3)_{k_{hor}- i, j})\in \{ 2\pi, 0, -2\pi\}
\end{equation}
for $(i\eps, j\eps)\in [0, 1/2)^2\cap \eps \Z^2$. Analogously, we extend $(\theta_3^{hor}, \theta^{ver}_3)$ vertically to the whole domain $\Omega \cap\eps\Z^2$. Since the discrete curl-condition is satisfied, there is a spin field $u^{(3)} \colon \Omega \cap \eps\Z^2 \rightarrow \mathbb{S}^1$ whose associated angular velocity field is given by $(\theta_3^{hor}, \theta_3^{ver})$. The energy can be bounded similarly to the non-periodic case, including the second vertical layer of vortices on the left boundary and the additional horizontal and vertical transition layers, see Figure \ref{fig:vortex_structure_per}. Hence, we obtain the bound
\begin{equation}
    \Eiso(u^{(3)}, \Omega) \lesssim \eps\delta^{1/2} + \eps \delta \lesssim \eps\delta^{1/2}.
\end{equation}
This concludes the proof of the upper bound of Theorem \ref{thm:scaling_per}.
\end{proof}

\section*{Acknowledgement}
The authors gratefully acknowledge funding of the Deutsche Forschungsgemeinschaft (DFG, German Research Foundation) via project 195170736 - TRR 109. In addition, the authors would like to thank Marco Cicalese and Marwin Forster for helpful discussions. Furthermore, Melanie Koser would also like to thank Franz Bethke for illuminating numerical experiments achieved in a collaboration.  

\section*{Appendix}\label{sec:appendix}

In the following we recall a heuristic argument based on \cite{B11-2015-CS, B11-2019-CFO} for the Gamma-convergence result in Section \ref{sec:Gamma}. We adjust the presentation to the anisotropic model for the interaction parameters $\dhor, \dver \in (0,1).$
For $\eps > 0$ small enough one formally has $(u_{i+2, j} - 2u_{i+1, j} + u_{i,j})/\eps^2 \approx \partial_{11} u $ and similarly $(u_{i, j+2} - 2u_{i, j+1} + u_{i,j})/\eps^2 \approx \partial_{22} u $ for some smooth enough function $u \colon \Omega \rightarrow S^1$ (see below). This consideration and a formal computation \cite{B11-2015-CS, B11-2019-CFO} lead to
\begin{equation}
\begin{split}
& \E(u, \Omega)  = \frac{\eps^2}{2} \sum_{(i,j)} \vert u_{i,j} -2(1-\dhor) u_{i+1, j} + u_{i+2, j}\vert^2 + \frac{\eps^2}{2} \sum_{(i,j)} \vert u_{i,j} -2(1-\dver) u_{i, j+1} + u_{i, j+2}\vert^2 \\
 & \quad = \frac{\eps^2}{2} \sum_{(i,j)} \left\vert \eps^2 \frac{u_{i+2, j} - 2u_{i+1, j} + u_{i,j}}{\eps^2} + 2 \dhor u_{i+1,j} \right\vert  + \frac{\eps^2}{2} \sum_{(i,j)} \left\vert \eps^2 \frac{u_{i, j+2} - 2u_{i, j+2} + u_{i,j}}{\eps^2} + 2 \dver u_{i,j+2} \right\vert\\
 & \quad \simeq \frac12\int \vert \eps^2 \partial_{11}u + 2\dhor u\vert^2  + \left\vert \eps^2 \partial_{22}u  + 2 \dver u\right\vert \dx\dy \\
 & \quad \simeq \frac12\int 4(\dhor)^2 + 4\dhor\eps^2 u \partial_{11} u + \eps^4 \vert \partial_{11} u \vert^2 + 4(\dver)^2 + 4\dver\eps^2 u \partial_{22} u + \eps^4 \vert \partial_{22} u \vert^2  \dx\dy \\
 & \quad \simeq \frac12\int 4(\dhor)^2 - 4\dhor\eps^2 \vert \partial_1 u \vert^2  + \eps^4 \vert \partial_{11} u \vert^2 + 4(\dver)^2 - 4\dver\eps^2 \vert \partial_2 u \vert^2 + \eps^4 \vert \partial_{22} u \vert^2  \dx\dy
\end{split}
\end{equation}
Using polar coordinates, we write $u$ in the form $u = (\cos(\Psi), \sin(\Psi))$ for some suitable $\Psi \colon \Omega \to [-\pi, \pi)$. Assuming that $\Psi$ if differentiable and using $\vert \partial_i u \vert^2 = \vert \partial_i \Psi\vert^2$ and $\vert \partial_{ii} u \vert^2 = \vert \partial_{ii} \Psi\vert^2 + \vert \partial_{i} \Psi\vert^4$ for $i=1,2$, we formally obtain
\begin{equation}
    \begin{split}
        & \E(u, \Omega) \\ \simeq &\frac12\int 4(\dhor)^2 - 4(\dhor)\eps^2 \vert \partial_1 u \vert^2  + \eps^4 \vert \partial_{11} u \vert^2 + 4(\dver)^2 - 4\dver\eps^2 \vert \partial_2 u \vert^2 + \eps^4 \vert \partial_{22} u \vert^2  \dx\dy \\
        &  =  \frac12\int (2 \dhor - \eps^2 \vert \partial_1 \Psi\vert^2)^2   + \eps^4 \vert \partial_{11} \Psi \vert^2 + (2 \dver - \eps^2 \vert \partial_2 \Psi\vert^2)^2 + \eps^4 \vert \partial_{22} \Psi \vert^2  \dx\dy \\
        &  = \frac12 \int 4 (\dver)^2 \left(\frac{\dhor}{\dver} -  \left\vert \frac{\eps}{\sqrt{2\dver}}\partial_1 \Psi\right\vert^2\right)^2   + \eps^4 \vert \partial_{11} \Psi \vert^2 \dx\dy \\
        &  \phantom{ = } + \frac12 \int 4 (\dver)^2 \left(1 -  \left\vert \frac{\eps}{\sqrt{2\dver}}\partial_2 \Psi\right\vert^2\right)^2  + \eps^4 \vert \partial_{22} \Psi \vert^2  \dx\dy 
    \end{split}
\end{equation}
Substituting $\varphi = \eps \Psi / \sqrt{2\dver}$ and $\gamma^2 \coloneqq \dhor/\dver$ finally leads to 
\begin{equation}
    \begin{split}
        & \E(u, \Omega) \\ \simeq & \frac12\int 4 (\dver)^2 \left(\gamma^2 -  \left\vert\partial_1 \varphi\right\vert^2\right)^2   + 2\dver\eps^2 \vert \partial_{11} \varphi \vert^2 + 4 (\dver)^2 \left(1 -  \left\vert \partial_2 \varphi\right\vert^2\right)^2  + 2\dver\eps^2 \vert \partial_{22} \varphi \vert^2  \dx\dy\\ 
        = &\sqrt{2} \eps (\dver)^{3/2} \int \bigg[ \frac{\sqrt{2\dver}}{\eps}\left(\gamma^2 -  \left\vert\partial_1 \varphi\right\vert^2\right)^2 + \frac{\eps}{\sqrt{2\dver}} \vert \partial_{11} \varphi\vert^2 + \frac{\sqrt{2\dver}}{\eps}\left(1 -  \left\vert\partial_2 \varphi\right\vert^2\right)^2 \\ & \hspace{3cm} +   \frac{\eps}{\sqrt{2\dver}} \vert \partial_{22} \varphi\vert^2 \bigg] \dx \dy 
    \end{split}
\end{equation}
Assuming small angular changes of the spin field $u$ (see Section \ref{sec:Gamma} for a rigorous proof) in the horizontal and vertical direction formally yields 
\[\partial_1 \varphi =  \frac{\eps}{\sqrt{2\dver}} \partial_1 \Psi \simeq \frac{1}{\sqrt{2\dver}} \hor = \gamma \frac{1}{\sqrt{2\dhor}} \hor \simeq \gamma \sqrt{\frac{2}{\dhor}} \sin\left( \frac{\hor}{2}\right) \eqqcolon \gamma w^\Delta(u)  \]
and similarly 
\[ \partial_2 \varphi = \frac{\eps}{\sqrt{2\dver}} \partial_2 \Psi  \simeq  \sqrt{\frac{2}{\dver}} \sin \left( \frac{\ver}{2}\right)\eqqcolon z^\Delta(u),\] where $w^\Delta(u)$ and $z^\Delta(u)$ are the chirality order parameters introduced in Section \ref{sec:model}.
Setting $\sigma \coloneqq \eps/\sqrt{2\dver}$ and the formal equality $\nabla \varphi = (\gamma w^\Delta(u), z^\Delta(u))$, we obtain the renormalized energy 
\begin{equation}
    \begin{split}
        &\H(u, \Omega) \\ \coloneqq &\frac1\sigma \int (\gamma^2 - \vert\partial_1\varphi\vert^2)^2 \dx \dy + \sigma \int \vert \partial_{11} \varphi\vert^2 \dx\dy + \frac1\sigma \int (1- \vert \partial_2 \varphi\vert^2 )^2 \dx\dy + \sigma \int \vert\partial_{22} \varphi \vert^2 \dx\dy \\
         \simeq &\frac{1}{\sqrt{2} \eps (\dver)^{3/2}} \E(u, \Omega).
    \end{split}
\end{equation}

\bibliography{spins}

\newcommand{\etalchar}[1]{$^{#1}$}
\begin{thebibliography}{ADLGP14}

\bibitem[ABC{\etalchar{+}}22]{AlBrCiLu2022}
R.~Alicandro, A.~Braides, M.~Cicalese, L.~De~Luca, and A.~Piatnitski.
\newblock Topological singularities in periodic media: {G}inzburg--{L}andau and
  core-radius approaches.
\newblock {\em Arch. Rat. Mech. Anal.}, 243(2):559--609, 2022.

\bibitem[ABCS23]{Alicandro_Braides_Cicalese_Solci_2023}
R.~Alicandro, A.~Braides, M.~Cicalese, and M.~Solci.
\newblock {\em Discrete Variational Problems with Interfaces}.
\newblock Cambridge Monographs on Applied and Computational Mathematics.
  Cambridge University Press, 2023.

\bibitem[AC09]{B11-2009-AC}
R.~Alicandro and M.~Cicalese.
\newblock Variational analysis of the asymptotics of the ${\MakeUppercase
  {xy}}$ model.
\newblock {\em Arch. Rat. Mech. Anal.}, 192(3):501--36, 2009.

\bibitem[ACP11]{Cic2011_screw}
R.~Alicandro, M.~Cicalese, and M.~Ponsiglione.
\newblock Variational equivalence between {G}inzburg-{L}andau, ${\MakeUppercase
  {xy}}$ spin systems and screw dislocation energies.
\newblock {\em Indiana Univ. Math. J.}, 60(1):171--208, 2011.

\bibitem[ACR15]{B11-2015-ACR}
R.~Alicandro, M.~Cicalese, and M.~Ruf.
\newblock Domain formation in magnetic polymer composites: an approach via
  stochastic homogenization.
\newblock {\em Arch. Rat. Mech. Anal.}, 218(2):945--984, 2015.

\bibitem[ADGP16]{AlGaLuPo2016}
R.~Alicandro, L.~{De Luca}, A.~Garroni, and M.~Ponsiglione.
\newblock Dynamics of discrete screw dislocations on glide directions.
\newblock {\em J. Mech. Phys. Solids}, 92:87--104, 2016.

\bibitem[ADLGP14]{AlLuGaPo2014}
R.~Alicandro, L.~De~Luca, A.~Garroni, and M.~Ponsiglione.
\newblock Metastability and dynamics of discrete topological singularities in
  two dimensions: A {$\Gamma$}-convergence approach.
\newblock {\em Arch. Rat. Mech. Anal.}, 214(1):269--330, 2014.

\bibitem[AGZ24]{AbGinZwi}
L.~Abel, J.~Ginster, and B.~Zwicknagl.
\newblock A scaling law for a model of epitaxially strained elastic films with
  dislocations, 2024.
\newblock arXiv:2403.13646.

\bibitem[BCDLP18]{B11-2018-BCDLP}
R.~Badal, M.~Cicalese, L.~De~Luca, and M.~Ponsiglione.
\newblock {$\Gamma$}-convergence analysis of a generalized xy model: Fractional
  vortices and string defects.
\newblock {\em Comm. Math. Phys.}, 358(2):705--739, 2018.

\bibitem[BCDM02]{BCDM02}
H.~{Ben Belgacem}, S.~Conti, A.~DeSimone, and S.~M\"uller.
\newblock Energy scaling of compressed elastic films -- three-dimensional
  elasticity and reduced theories.
\newblock {\em Arch. Rat. Mech. Anal.}, 164:1--37, 2002.

\bibitem[BCL05]{Brezis:05}
H.~Br{\'e}zis, P.~G. Ciarlet, and J.~L. Lions.
\newblock {\em Analyse fonctionnelle: Th{\'e}orie et applications}.
\newblock Sciences sup. Dunod, 2005.

\bibitem[BCM17]{BCM2017}
D.~Bourne, S.~Conti, and S.~M{\"u}ller.
\newblock Energy bounds for a compressed elastic film on a substrate.
\newblock {\em J. Nonlinear Science}, 27:453--494, 2017.

\bibitem[BG15]{bella-goldman:15}
P.~Bella and M.~Goldman.
\newblock {Nucleation barriers at corners for cubic-to-tetragonal phase
  transformation}.
\newblock {\em Proc. Roy. Soc. Edinburgh A}, 145:715--724, 2015.

\bibitem[BK14]{bella-kohn:14}
P.~Bella and R.~V. Kohn.
\newblock {Wrinkles as the result of compressive stresses in an annular thin
  film}.
\newblock {\em Comm. Pure Appl. Math.}, 67(5):693--747, 2014.

\bibitem[BK24]{BeKo24}
F.~Bethke and M.~Koser.
\newblock Towards the numerical optimization of frustrated spin systems at the
  helimagnetic/ferromagnetic transition point.
\newblock {\em In preperation}, 2024.

\bibitem[BT08]{Braides-Truskinowski}
A.~Braides and L.~Truskinovsky.
\newblock Asymptotic expansions by {$\Gamma$}-convergence.
\newblock {\em Continuum Mech. Thermodyn.}, pages 21--62, 2008.

\bibitem[CC15]{chan-conti:14-1}
A.~Chan and S.~Conti.
\newblock {Energy scaling and branched microstructures in a model for
  shape-memory alloys with {SO}(2) invariance}.
\newblock {\em Mathematical Models Methods App. Sci.}, 25:1091--1124, 2015.

\bibitem[CDKZ21]{CDKZ21}
S.~Conti, J.~Diermeier, M.~Koser, and B.~Zwicknagl.
\newblock Asymptotic self-similarity of minimizers and local bounds in a model
  of shape-memory alloys.
\newblock {\em J. Elast.}, 147:149--200, 2021.

\bibitem[CDMZ20]{ContiDiermeierMelchingZwicknagl2020}
S.~Conti, J.~Diermeier, D.~Melching, and B.~Zwicknagl.
\newblock Energy scaling laws for geometrically linear elasticity models for
  microstructures in shape memory alloys.
\newblock {\em ESAIM Control Optim. Calc. Var.}, 26:115, 2020.

\bibitem[CFO19]{B11-2019-CFO}
M.~Cicalese, M.~Forster, and G.~Orlando.
\newblock Variational analysis of a two-dimensional frustrated spin system:
  emergence and rigidity of chirality transitions.
\newblock {\em SIAM J. Math. Anal.}, 51(6):4848--4893, 2019.

\bibitem[CFO22]{cicalese2022variational}
M.~Cicalese, M.~Forster, and G.~Orlando.
\newblock Variational analysis of the {$J_1-J_2-J_3$} model: A non-linear
  lattice version of the aviles--giga functional.
\newblock {\em Arch. Rat. Mech. Anal.}, 245(2):1059--1133, 2022.

\bibitem[CGOS18]{ContiGoldmanOttoSerfaty2018}
S.~Conti, M.~Goldman, F.~Otto, and S.~Serfaty.
\newblock A branched transport limit of the {Ginzburg}-{Landau} functional.
\newblock {\em J. Éc. polytech. Math.}, 5:317--375, 2018.

\bibitem[CK98]{choksi-kohn:98}
R.~Choksi and R.~V. Kohn.
\newblock {Bounds on the micromagnetic energy of a uniaxial ferromagnet}.
\newblock {\em Comm. Pure Appl. Math.}, 51(3):259--289, 1998.

\bibitem[CKO99]{choksi-et-al:99}
R.~Choksi, R.~V. Kohn, and F.~Otto.
\newblock {Domain branching in uniaxial ferromagnets: a scaling law for the
  minimum energy}.
\newblock {\em Comm. Math. Phys.}, 201(1):61--79, {1999}.

\bibitem[CKO04]{ChoksiKohnOtto2004}
R.~Choksi, R.~V. Kohn, and F.~Otto.
\newblock Energy minimization and flux domain structure in the intermediate
  state of a type-{I} superconductor.
\newblock {\em J. Nonlinear Sci.}, 14:119--171, 2004.

\bibitem[Con00]{conti:00}
S.~Conti.
\newblock Branched microstructures: Scaling and asymptotic self-similarity.
\newblock {\em Comm. Pure Appl. Math.}, 53(11):1448--1474, 2000.

\bibitem[Con06]{conti:06}
S.~Conti.
\newblock A lower bound for a variational model for pattern formation in
  shape-memory alloys.
\newblock {\em Contin. Mech. Thermodyn.}, 17:469--476, 2006.

\bibitem[COR23]{CICALESE2023112929}
M.~Cicalese, G.~Orlando, and M.~Ruf.
\newblock A classical {$S^2$} spin system with discrete out-of-plane
  anisotropy: Variational analysis at surface and vortex scalings.
\newblock {\em Nonlinear Analysis}, 231:112929, 2023.

\bibitem[COS16]{ContiOttoSerfaty2016}
S.~Conti, F.~Otto, and S.~Serfaty.
\newblock Branched microstructures in the {Ginzburg-Landau} model of type-{I}
  superconductors.
\newblock {\em SIAM J. Math. Anal.}, 48:2994--3034, 2016.

\bibitem[CS15]{B11-2015-CS}
M.~Cicalese and F.~Solombrino.
\newblock Frustrated ferromagnetic spin chains: a variational approach to
  chirality transitions.
\newblock {\em J. Nonlinear Sci.}, 25(2):291--313, 2015.

\bibitem[Die13]{B11-2013-Diep}
H.~T. Diep, editor.
\newblock {\em Frustrated spin systems}.
\newblock World Scientific, 2nd edition, 2013.

\bibitem[DKMO02]{DKMO:02}
A.~DeSimone, R.~V. Kohn, S.~M{\"u}ller, and F.~Otto.
\newblock A reduced theory for thin-film micromagnetics.
\newblock {\em Commun. Pure Appl. Math.}, 55:1408--1460, 2002.

\bibitem[DVJ19]{Dabade-et-al:19}
V.~Dabade, R.~Venkatraman, and R.D. James.
\newblock Micromagnetics of {G}alfenol.
\newblock {\em J. Nonlinear Sci.}, 29:415--460, 2019.

\bibitem[EG92]{EvansLawrenceC1992Mtaf}
L.~C. Evans and R.~F. Gariepy.
\newblock {\em Measure theory and fine properties of functions}.
\newblock Studies in advanced mathematics. CRC Press, 1992.

\bibitem[For18]{FoMaster2018Discrete-to-Continuum}
M.~Forster.
\newblock {Discrete-to-Continuum Gamma-Limit of a Planar Chirality Model}.
\newblock Master thesis, 2018.

\bibitem[Gin23]{Gin23}
J.~Ginster.
\newblock On the formation of microstructure for singularly perturbed problems
  with 2,3 or 4 preferred gradients, 2023.
\newblock CVGMT preprint: https://cvgmt.sns.it/paper/6022/.

\bibitem[Gol20]{goldman:20}
M.~Goldman.
\newblock Self-similar minimizers of a branched transport functional.
\newblock {\em Indiana Univ. Math. J.}, 69(4):1073--1104, 2020.

\bibitem[GRTZ24]{hex}
J.~Ginster, A.~Rüland, A.~Tribuzio, and B.~Zwicknagl.
\newblock On the effect of geometry on scaling laws for a class of martensitic
  phase transformations, 2024.
\newblock arXiv:2405.05927.

\bibitem[GZ23]{GiZw22}
J.~Ginster and B.~Zwicknagl.
\newblock Energy scaling law for a singularly perturbed four-gradient problem
  in helimagnetism.
\newblock {\em J. Nonlinear Sci.}, 33(1):20, 2023.

\bibitem[GZ24]{GiZw23_Martensites}
J.~Ginster and B.~Zwicknagl.
\newblock Energy scaling laws for microstructures: from helimagnets to
  martensites.
\newblock {\em Calc. Var. Partial Differential Equations}, 63(1):8, 2024.

\bibitem[JS01]{JinSternberg2}
W.~Jin and P.~Sternberg.
\newblock {Energy estimates of the {von K{\'a}rm{\'a}n} model of thin-film
  blistering}.
\newblock {\em J. Math. Phys.}, 42:192--199, 2001.

\bibitem[KKO13]{knuepfer-kohn-otto:13}
H.~Kn{\"u}pfer, R.~V. Kohn, and F.~Otto.
\newblock {Nucleation barriers for the cubic-to-tetragonal phase
  transformation}.
\newblock {\em Comm. Pure Appl. Math.}, 66(6):867--904, 2013.

\bibitem[KL23]{kubin-lamberti}
A.~Kubin and L.~Lamberti.
\newblock Variational analysis in one and two dimensions of a frustrated spin
  system: chirality and magnetic anisotropy transitions.
\newblock {\em Math. Eng.}, 5(6):1--37, 2023.

\bibitem[KM92]{kohn-mueller:92}
R.V. Kohn and S.~M{\"u}ller.
\newblock {Branching of twins near an austenite-twinned martensite interface}.
\newblock {\em Phil. Mag. A}, 66:697--715, 1992.

\bibitem[KM94]{B11-1994-KM}
R.~V. Kohn and S.~M\"{u}ller.
\newblock Surface energy and microstructure in coherent phase transitions.
\newblock {\em Comm. Pure Appl. Math.}, 47(4):405--435, 1994.

\bibitem[KM11]{knuepfer-muratov:11}
H.~Kn{\"u}pfer and C.B. Muratov.
\newblock Domain structure of bulk ferromagnetic crystals in applied fields
  near saturation.
\newblock {\em J. Nonlin. Sc.}, 21:921--962, 2011.

\bibitem[KO19]{knuepfer-otto:18}
H.~Kn\"upfer and F.~Otto.
\newblock Nucleation barriers for the cubic-to-tetragonal phase transformation
  in the absence of self-accommodation.
\newblock {\em Z. Angew. Math. Mech.}, 99:e201800179, 2019.

\bibitem[Koh06]{B11-kohn:06}
{R.V.} Kohn.
\newblock {Energy-driven pattern formation}.
\newblock In {\em {International Congress of Mathematicians, ICM 2006}},
  volume~1, pages 359--383. EMS Publishing House, 2006.

\bibitem[Kos24]{koser-diss-24}
M.~Koser.
\newblock {\em On singularly-perturbed variational problems for pattern
  formation in helimagnets and martensites}.
\newblock PhD thesis, Humboldt-Universit\"at zu Berlin, submitted 2024.

\bibitem[MBJ{\etalchar{+}}09]{Muehl09_Vortices}
S.~Mühlbauer, B.~Binz, F.~Jonietz, C.~Pfleiderer, A.~Rosch, A.~Neubauer,
  R.~Georgii, and P.~Böni.
\newblock Skyrmion lattice in a chiral magnet.
\newblock {\em Science}, 323(5916):915--919, 2009.

\bibitem[MNA{\etalchar{+}}10]{Muenz10_vortices}
W.~M\"unzer, A.~Neubauer, T.~Adams, S.~M\"uhlbauer, C.~Franz, F.~Jonietz,
  R.~Georgii, P.~B\"oni, B.~Pedersen, M.~Schmidt, A.~Rosch, and C.~Pfleiderer.
\newblock Skyrmion lattice in the doped semiconductor
  ${\text{fe}}_{1\ensuremath{-}x}{\text{co}}_{x}\text{Si}$.
\newblock {\em Phys. Rev. B}, 81:041203, 2010.

\bibitem[OV10]{otto-viehmann:10}
F.~Otto and T.~Viehmann.
\newblock {Domain branching in uniaxial ferromagnets: asymptotic behavior of
  the energy}.
\newblock {\em Calc. Var. Partial Differential Equations}, 38(1-2):135--181,
  2010.

\bibitem[RR23]{ried2023domain}
T.~Ried and C.~Román.
\newblock Domain branching in micromagnetism: scaling law for the global and
  local energies, 2023.
\newblock arXiv:2308.07239.

\bibitem[RT22]{ruland2021energy}
A.~Rüland and A.~Tribuzio.
\newblock On the energy scaling behaviour of a singularly perturbed tartar
  square.
\newblock {\em Arch. Rational Mech. Anal.}, 243:401–431, 2022.

\bibitem[Sim21a]{simon2018quantitative}
T.~M. Simon.
\newblock Quantitative aspects of the rigidity of branching microstructures in
  shape memory alloys via h-measures.
\newblock {\em SIAM J. Math. Anal.}, 53(4):4537--4567, 2021.

\bibitem[Sim21b]{simon-17}
T.~M. Simon.
\newblock Rigidity of branching microstructures in shape memory alloys.
\newblock {\em Arch. Ration. Mech. Anal.}, 241:1707--1783, 2021.

\bibitem[SPD{\etalchar{+}}20]{seiner2020branching}
H.~Seiner, P.~Plucinsky, V.~Dabade, B.~Bene{\v{s}}ov{\'a}, and R.D. James.
\newblock Branching of twins in shape memory alloys revisited.
\newblock {\em J. Mech. Phys. Solids}, 141:103961, 2020.

\bibitem[SV18]{scilla-valocchia}
G.~Scilla and V.~Valocchia.
\newblock Chirality transitions in frustrated ferromagnetic spin chains: A link
  with the gradient theory of phase transitions.
\newblock {\em J. Elasticity}, 132:271–293, 2018.

\bibitem[Tri78]{Tri:78}
H.~Triebel.
\newblock {\em Interpolation Theory, Function Spaces, Differential Operators},
  volume~18 of {\em North-Holland Mathematical Library}.
\newblock Elsevier, 1978.

\bibitem[UOMT06]{helimagnet_evidence_Uchida}
M.~Uchida, Y.~Onose, Y.~Matsui, and Y.~Tokura.
\newblock Real-space observation of helical spin order.
\newblock {\em Science}, 311(5759):359--361, 2006.

\bibitem[VDJ20]{DaVeJa20}
R.~Venkatraman, V.~Dabade, and R.~D. James.
\newblock Bounds on the energy of a soft cubic ferromagnet with large
  magnetostriction.
\newblock {\em J. Nonlinear Sci.}, 30(6):3367--3388, 2020.

\bibitem[YKO{\etalchar{+}}11]{Yu2011_Vortices}
X.~Z. Yu, N.~Kanazawa, Y.~Onose, K.~Kimoto, W.~Z. Zhang, S.~Ishiwata,
  Y.~Matsui, and Y.~Tokura.
\newblock Near room-temperature formation of a skyrmion crystal in thin-films
  of the helimagnet fege.
\newblock {\em Nat. Mater.}, 10(2):106--109, 2011.

\bibitem[Zie89]{Ziemer}
W.~P. Ziemer.
\newblock {\em Weakly Differentiable Functions}.
\newblock Graduate Texts in Mathematics, 120. Springer, New York, NY, 1989.

\bibitem[Zwi14]{zwicknagl:14}
B.~Zwicknagl.
\newblock {Microstructures in low-hysteresis shape memory alloys: Scaling
  regimes and optimal needle shapes}.
\newblock {\em Arch. Rat. Mech. Anal.}, 213(2):355--421, 2014.

\end{thebibliography}
\bibliographystyle{alpha}
\end{document}